
\documentclass[a4paper,fleqn]{cas-sc}



\newcommand{\bsx}{\boldsymbol{x}}
\newcommand{\bsy}{\boldsymbol{y}}
\newcommand{\bS}{\mathbb{S}}
\newcommand{\RR}{\mathbb{R}}
\newcommand{\N}{\mathbb{N}}
\newcommand{\bE}{\mathbb{E}}
\newcommand{\bC}{\mathbb{C}}

\newcommand{\calL}{\mathcal L}
\newcommand{\calA}{\mathcal A}
\newcommand{\calC}{\mathcal C}
\newcommand{\calI}{\mathcal I}
\newcommand{\calJ}{\mathcal J}
\renewcommand{\Re}{\operatorname{Re}}
\renewcommand{\Im}{\operatorname{Im}}
\newcommand{\mi}{\mathrm{i}}
\newcommand{\wt}{\widehat{t}}

\newtheorem{theo}{Theorem}[section]
\newtheorem{lem}{Lemma}[section]
\newtheorem{prop}{Proposition}[section]
\newtheorem{rem}{Remark}[section]

\newtheorem{ass}{Assumption}[section]
\newtheorem{defin}{Definition}[section]

\newcommand{\tlg}[1]{{\color{magenta} #1}}

\newcommand{\itemprime}[1]
{\bgroup
	\addtocounter{enumi}{-1}%
	\item#1
	\egroup}

\usepackage[numbers,sort&compress]{natbib}
\usepackage{subfig}
\usepackage{float}
\usepackage{enumerate}
\usepackage{caption}
\usepackage{graphicx}

\def\tsc#1{\csdef{#1}{\textsc{\lowercase{#1}}\xspace}}
\tsc{WGM}
\tsc{QE}

\newproof{proof}{Proof}

\begin{document}
	\let\WriteBookmarks\relax
	\def\floatpagepagefraction{1}
	\def\textpagefraction{.001}
	
	\shorttitle{Fractional stochastic diffusion equations on the sphere}    
	
	\shortauthors{Alodat. T. et.al}  
	
	\title [mode = title]{On approximation for time-fractional stochastic diffusion equations on the unit sphere}   
	
	
	%

	\author{Tareq Alodat}
	
	
	
	\ead{T.Alodat@latrobe.edu.au}
	
	
	
	\affiliation{organization={La Trobe University},
		addressline={Melbourne}, 
		citysep={Melbourne}, 
		postcode={VIC 3086}, 
		country={Australia}}
	
	\author{Quoc T. Le Gia}
	\fnmark[*]
	
	\ead{qlegia@unsw.edu.au}
	
	\ead[url]{https://research.unsw.edu.au/people/dr-quoc-thong-le-gia}
	\affiliation{organization={The University of New South Wales},
		addressline={Sydney}, 
		citysep={Kensington}, 
		postcode={NSW 2052}, 
		country={Australia}}
	
	\author{Ian H. Sloan}
	\ead{I.sloan@unsw.edu.au}
	
	\cortext[1]{Quoc T. Le Gia}
	
	
	\nonumnote{}
	
\begin{abstract}
This paper develops a two-stage stochastic model to investigate the evolution of random fields on the unit sphere $\bS^2$ in $\RR^3$. The model is defined by a time-fractional stochastic diffusion equation on $\bS^2$ governed by a diffusion operator with a time-fractional derivative defined in the Riemann-Liouville sense. In the first stage, the model is characterized by a homogeneous problem with an isotropic Gaussian random field on $\bS^2$ as an initial condition. In the second stage, the model becomes an inhomogeneous problem driven by a time-delayed Brownian motion on $\bS^2$.
The solution to the model is given in the form of 
an expansion in terms of complex spherical harmonics. An approximation to the solution is given by truncating the expansion of the solution at degree $L\geq1$. The rate of convergence of the truncation errors as a function of $L$ and the mean square errors as a function of time are also derived. It is shown that the convergence rates depend not only on the decay of the angular power spectrum of the driving noise and the initial condition, but also on the order of the fractional derivative. We study sample properties of the stochastic solution and show that the solution is an isotropic H\"{o}lder continuous random field. Numerical examples and simulations inspired by the cosmic microwave background (CMB) are given to illustrate the theoretical findings.
\end{abstract}
	
	

\begin{keywords}
\sep Fractional derivative \sep Random field \sep Angular power spectrum \sep Spherical harmonics \sep Time-delayed Brownian motion 
\end{keywords}
	
	\maketitle
	
\section{Introduction}\label{1}
	
Let $\bS^2 \subset \RR^3$ be the 2-dimensional unit sphere in the Euclidean space $\RR^3$.
In this paper, we develop a two-stage stochastic model defined by time-fractional stochastic diffusion equations on $\bS^2$. The stochastic model can be used to model the evolution of spatio-temporal stochastic systems such as climate changes and the density fluctuations in the primeval universe (see \cite{Stefano2013,Philip2019,PlanckA1,Anh_et_al}). For general stochastic diffusion models (or It\^o diffusion) we refer the reader to \cite{Oks2003}). 
	
The model is constructed as follows. Let $\xi$ be a strongly isotropic Gaussian random field on $\bS^2$ and let $U(t)$, $ t\in(0,\infty)$, be a random field on $\bS^2$, which solves the following time-fractional diffusion equation:
\begin{align}\label{System}
		dU(t)   -  D_{t}^{1-\alpha} \Delta_{\bS^2} U(t) \; dt =
		\begin{cases}
			0,&t\in (0,\tau],\\
			dW_{\tau}(t),&t\in  [\tau,\infty),
\end{cases}
\end{align}
with the initial condition $U(0)= \xi$, where $\Delta_{\bS^2}$ is the Laplace-Beltrami operator, $\alpha$ is a constant satisfying $0 < \alpha \leq1$, $D_{t}^{\beta}$ is the Riemann-Liouville time fractional derivative of order $0\leq\beta<1$ defined, see \cite{Podlubny}, as 
 \begin{align}\label{D:Frac}
     D_{t}^{\beta} g(t):= \frac{d}{dt} 
	\int_0^t \frac{1}{\Gamma(1-\beta)}\frac{1}{ (t-s)^{\beta}} g(s) ds,\quad \ t>0,
 \end{align}
and $W_{\tau}$,\ $\tau>0$, is a time-delayed Brownian motion defined in $L_2(\Omega\times\bS^2)$ (the product space of the probability space $\Omega$ and the sphere $\bS^2$). The structure of the noise $W_\tau$ will be given in Section~\ref{sub2.3}.


In words, the model in \eqref{System} describes an initial isotropic random field evolving by a time-fractional diffusion process, with no external input until time $\tau$, at which time external noise is switched on.
	
Note that in the model \eqref{System}, it is assumed that the random field $\xi$ is independent of the noise $W_\tau$.
Since equation \eqref{System} is linear, it is easy to see that $U$ can be decomposed as
\begin{align}\label{Fullsol}
		U(t):= U^{H}(t)+U^{I}(t),\quad t\in (0,\infty), 
\end{align}
where $U^{H}$ satisfies the homogeneous equation 
\begin{align}\label{to}
		dU^{H}(t)   -  D_t^{1-\alpha} \Delta_{\bS^2} U^{H}(t) \; dt= 0,\quad\ \ t\in  (0,\infty),
\end{align}
with the initial condition $ U^{H}(0) = \xi$, and 
$U^{I}$ satisfies the inhomogeneous equation 
\begin{align}\label{pde}
		dU^{I}(t)   -  D_{t}^{1-\alpha} \Delta_{\bS^2} U^{I}(t) \; dt =
		\begin{cases}
			0,&t\in (0,\tau],\\
			dW_{\tau}(t),&t\in [\tau,\infty),
		\end{cases}
\end{align}
with the condition $U^{I}(t)=0$, $t\in (0,\tau]$, from which it follows that $U^{I}(t)$ is zero until time $\tau$.
	
We write the equation \eqref{pde} in the integral form, for $t>\tau$, as
\begin{align}\label{UIintform}
   U^{I}(t)-  \frac{1}{\Gamma(\alpha)}\int_\tau^t\frac{\Delta_{\bS^2} U^{I}(s)}{(t-s)^{1-\alpha}}ds= W_{\tau}(t). 
\end{align}
Note that in \eqref{System} there are two sources of randomness, namely the initial condition $\xi$ and the driving noise $W_\tau$. The split of the stochastic solution $U$ in equation \eqref{Fullsol} into two independent parts (due to linearity) separates the two sources of randomness: the homogeneous solution $U^H$ is random only through the initial condition, while the inhomogeneous solution $U^I$ is random only through the time-delayed Brownian motion $W_\tau$.

We shall derive the stochastic solution $U$ to the equation \eqref{System}, which represents a random field defined in $L_2(\Omega\times\bS^2)$. The solution $U$ is given in the form of an expansion in terms of complex orthonormal spherical harmonics $\{Y_{\ell,m}: \ell\in\N_{0}, m=-\ell,\dots,\ell\}$, i.e., 
\begin{align}\label{A1}
		U(\omega,t):= \sum_{\ell=0}^\infty \sum_{m=-\ell}^{\ell} \widehat{U}_{\ell,m}(\omega,t) Y_{\ell,m}, 
         \ t\in(0,\infty),
\end{align}
where the random variables $\widehat{U}_{\ell,m}$ represent the Fourier coefficients of $U$ and are given by 
\begin{align*}
		\widehat{U}_{\ell,m}(\omega,t):=\int_{\bS^2}U(\omega,\bsx,t)\overline{Y_{\ell,m}(\bsx)}\mu(d\bsx),\quad t\in(0,\infty),
\end{align*}
where $\overline{Y_{\ell,m}}$ denotes the complex conjugate of $Y_{\ell,m}$ and $\mu$ is the normalized Riemann surface measure on the sphere $\bS^2$ (details are given in Section \ref{Sec2}).
	
Given that the initial field $\xi$ is a strongly isotropic Gaussian random field on $\bS^2$, we shall show that the stochastic solution $U$ is also a strongly isotropic Gaussian random field on $\bS^2$.
	
We truncate the expansion \eqref{A1} at a level $\ell=L\geq1$ to obtain an approximate version $U_L$ of the solution $U$, i.e.,
\begin{align*}
		U_L(\omega,t) := \sum_{\ell=0}^L\sum_{m=-\ell}^{\ell} \widehat{U}_{\ell,m}(\omega,t) Y_{\ell,m}, \quad t\in(0,\infty).
\end{align*}
We investigate the approximate solution $U_L$ and its rate of convergence as $L\to\infty$. We shall show that the convergence rate (in $L_2$-norm on $L_2(\Omega\times\bS^2)$) depends not only on the decay of the power spectrum of the driving noise $W_{\tau}$ and the initial condition $\xi$, but also on the order of the fractional derivative $\alpha$. 
Also, we show that the stochastic solution $U(\omega,t)$, $t>\tau$, evolves continuously with time by deriving an upper bound in the $L_2$-norm for its temporal increments from time $t$ to $t+h$ of the form $q(t)h^{\frac{1}{2}},\ h>0$, $t\in(\tau,\infty)$, where the continuous function $q(\cdot)$ does not depend on $h$. Finally, under some conditions, we show the existence of a locally H\"{o}lder continuous modification of $U$.
	
Note that the proposed model, defined in \eqref{System}, describes the evolution (in time) of a two-stage stochastic system. The homogeneous equation given in \eqref{to} determines the evolution for the system with no external input while the inhomogeneous equation \eqref{pde} provides the perturbation produced by the Brownian motion starting at time $\tau$. The cosmic microwave background (CMB) provides motivation for considering multi-stage stochastic systems. 
Roughly speaking, CMB is electromagnetic radiation that has been emitted across the cosmos since ionised atoms and electrons recombined roughly 370,000 years after the Big Bang \cite{PlanckA9,PlanckA16,Dodelson}. 
Since CMB has passed through several stages called formation epochs or phases (such as Planck, Grand Unification, Inflation and recombination), it can be considered as an example of a multi-stage stochastic system.

The literature shows a variety of studies where models using stochastic partial differential equations were developed, see for example \cite{Philip2019,CohLan21,KazLeG19,LanSch15,LeoVaz19,McLTho10}. 
However, little attention has been paid to two-stage stochastic models of the kind considered here. A fractional SPDE,  governed by a fractional derivative in time and  a fractional diffusion operator in space, was developed in \cite{Vho2021} to model evolution of tangent vector random fields on the unit sphere. In a recent paper, Anh et.al. \cite{Anh_et_al} considered a two-stage stochastic model defined by SPDEs on the sphere governed by a fractional diffusion operator in space and a fractional Brownian motion as driving noise. The case where a random initial condition given by a fractional stochastic Cauchy problem was considered. They showed that the truncation errors of the stochastic solution have, in the $L_2-$norm, the convergence rates $C_tL^{-r}$, $r>1$, under the assumption that the variances of the driving noise satisfy some smoothness condition. Also, as a function of time $t$, it was demonstrated that the ``constant'' $C_t$ blows up when $t$ is sufficiently small.
In a recent article \cite{Oksendal2003frac}, a time-fractional stochastic heat equation driven by time-space white noise in $\mathbb{R}^d$ was considered. The model is defined in the distribution sense, and an explicit solution is derived within the space of tempered distributions. The stochastic solution is demonstrated to exhibit mild behavior exclusively when the dimension is either $d=1$ or $d=2$, and the order of the fractional derivative falls within the range $\alpha\in(1,2)$, while for $\alpha<1$, the solution is not mild for any $d$.

The paper is structured as follows.
Section \ref{Sec2} presents necessary material from the theory of functions on the sphere $\bS^2$, spherical harmonics, Gaussian random fields, $L_2(\bS^2)$-valued time-delayed Brownian motions, and some necessary tools from the theory of stochastic integrals. Section \ref{Sec4} derives properties of the solution to the homogeneous equation \eqref{to}. In Section \ref{Sec5} we study the solution to the inhomogeneous equation \eqref{pde}. In Section \ref{FullSec} we give results for the combined equation \eqref{System}. Section \ref{Sec6} derives an approximation to the solution of equation \eqref{System}, the rate of convergence for the truncation errors. In Section \ref{Sec7} we study the temporal increments, in the $L_2$-norm, of the stochastic solution of \eqref{System} and its sample H\"{o}lder continuity property. 
In Section \ref{Num} we provide some numerical examples to explain the theoretical findings. In particular, Section \ref{Evol} illustrates the evolution of the stochastic solution of \eqref{System} using simulated data inspired by the CMB map. Section \ref{Sim-stage} explains the convergence rates of the truncation errors, in the $L_2$-norm, of the stochastic solution of \eqref{System}. Finally,  Section \ref{Sim-inc} explores the convergence rates of the temporal increments, in the $L_2$-norm, of the solution of \eqref{System}.

\section{Preliminaries}\label{Sec2}
Let $\RR^3$ be the 3-dimensional Euclidean space. Let $\bsx,\bsy\in\RR^3$, i.e. $\bsx=(x_1,x_2,x_3)$ and $\bsy=(y_1,y_2,y_3)$, then the inner product of $\bsx$ and $\bsy$  is 
$\bsx\cdot\bsy:=\sum_{i=1}^{3}x_{i}y_{i}$ and the Euclidean norm of $\bsx$ is $\|\bsx\|:=\sqrt{\bsx\cdot\bsx}$. We denote by $\bS^2 \subset \RR^3$ the 2-dimensional unit sphere in $\RR^3$, i.e.,
	$\bS^2 :=\left\lbrace\bsx \in \RR^3: \|\bsx \|=1 \right\rbrace$. The pair $(\bS^2,\tilde{d})$ forms a compact metric space, where $\tilde{d}$ is the geodesic metric defined by $\tilde{d}(\bsx,\bsy):=\arccos(\bsx\cdot\bsy)$, $\bsx,\bsy\in\bS^2$. 
	
Let $(\Omega,\mathcal{F},\mathbb{P})$ be a probability space and $L_{2}(\Omega,\mathbb{P})$ be the $L_{2}$-space on $\Omega$ with respect to the probability measure $\mathbb{P}$, endowed with the norm $\|\cdot\|_{L_{2}(\Omega)}$. For two random variables $X$ and $Y$ on $(\Omega,\mathcal{F},\mathbb{P})$, we denote by $\bE [X]$ the expected value of $X$ and by $Cov(X,Y):=\bE[(X-\bE[X])(Y-\bE[Y])]$ the covariance between $X$ and $Y$. For two random variables $X$ and $Y$, we use $X\simeq Y$ to denote the equality in distribution. Note that when $X\simeq Y$, $Cov(X,X)$ is the variance of $X$, which is denoted by $Var[X]$. For a random variable $X$ on $(\Omega,\mathcal{F},\mathbb{P})$, we denote by $\varphi_{X}(\cdot)$ the characteristic function of $X$, i.e. $\varphi_{X}(r):=\bE\big[e^{\mi r X}\big]$.

	\subsection{Functions on the sphere}\label{RF}
	Let $\mu$ be the normalized Riemann surface measure on the sphere $\bS^2$ (i.e. $\int_{\bS^2}d\mu=1$). We denote by $L_{2}(\bS^2):=L_{2}(\bS^2,\mu)$ the space of complex-valued square $\mu$-integrable functions on $\bS^2$, i.e.,
	\[
	L_{2}(\bS^2)=\left\{f: \int_{\bS^2}|f(\bsx)|^2d\mu(\bsx)<\infty\right\}.
	\]
	Let $f, g$ be two arbitrary functions in $L_{2}(\bS^2)$, the inner product $\langle f,g\rangle_{L_{2}(\bS^2)}$ is defined as
	\[
	\langle f,g\rangle_{L_{2}(\bS^2)}:=\int_{\bS^2}f(\bsx)\overline{g(\bsx)}d\mu(\bsx)
	\]
	and the norm $\|f\|_{L_{2}(\bS^2)}$ of $f$ is 
	\[
	\|f\|_{L_{2}(\bS^2)}:=\left(\int_{\bS^2}|f(\bsx)|^2d\mu(\bsx)\right)^{\frac{1}{2}}.
	\]
	Let $\N_{0}=\{0,1,2,\dots\}$ and let $P_\ell$, $\ell\in\N_{0}$, be the Legendre polynomials of degree $\ell$, i.e.,
	\[
	P_\ell(t):= 2^{-\ell}\frac{1}{\ell!}\dfrac{d^\ell}{d t^{\ell}}(t^2-1)^\ell, \quad \ell\in\N_{0},\ t\in[-1,1].
	\]
	The Legendre polynomials define the associated Legendre functions $P_{\ell,m}$, $\ell\in\N_{0}$, $m=0,\dots,\ell$, by 
	\[
	P_{\ell,m}(t):=(-1)^m(1-t^2)^{\frac{m}{2}}\dfrac{d^m}{d t^{m}}P_\ell(t), \quad \ell\in\N_{0},\ t\in[-1,1].
	\]
Let $\{ Y_{\ell,m}: \ell\in\N_{0}, m=-\ell,\dots,\ell \}$ be an orthonormal basis of complex-valued spherical harmonics 
of $L_{2}(\bS^2)$. Using spherical coordinates $\bsx:=(\sin\theta\cos\varphi,\sin\theta\sin\varphi,\cos\theta)\in\bS^2$, $\theta\in[0,\pi]$, $\varphi\in[0,2\pi)$, the
	spherical harmonic functions can be written as, for $\ell\in\N_{0}$,
	\begin{align*}
		&Y_{\ell,m}(\theta,\varphi):=\sqrt{\dfrac{(2\ell+1)(\ell-m)!}{(\ell+m)!}} P_{\ell,m}(\cos\theta)e^{\mi m\varphi},\ \  m=0,\dots,\ell,\\
		&Y_{\ell,m}:=(-1)^m\overline{Y_{\ell,-m}},\ \  m=-\ell,\dots,-1.
	\end{align*}
	
In what follows, we will adopt the notation $Y_{\ell,m}$ to represent spherical harmonics as a function of both spherical and Euclidean coordinates. Specifically, for all $\bsx \in \mathbb{S}^2$, we shall write (with a slight abuse of notation)
$Y_{\ell,m}(\mathbf{\bsx}) := Y_{\ell,m} (\theta, \varphi)$, where $\bsx = (\sin \theta \cos \varphi, \sin \theta \sin \varphi, \cos \theta)$ and $(\theta, \varphi) \in [0, \pi] \times [0, 2\pi).$

The basis $Y_{\ell,m}$ and the Legendre polynomial $P_{\ell}$ satisfy the addition theorem (see \cite{Muller}), that is for $\bsx,\bsy\in\bS^2$, there holds
	\begin{align}\label{addition}
		\sum_{m=-\ell}^{\ell} Y_{\ell,m}(\bsx)\overline{Y_{\ell,m}(\bsy)}=(2\ell+1)P_{\ell}(\bsx\cdot\bsy).
	\end{align}
	The Laplace-Beltrami operator $\Delta_{\bS^2}$ (or the spherical Laplacian) on the sphere $\bS^2$ at $\bsx$ is given in terms of the spherical coordinates by (see \cite{Muller,Dai})
	\[
	\Delta_{\bS^2}:= \frac{1}{\sin\theta}\frac{\partial}{\partial\theta}\Big(\sin\theta\frac{\partial}{\partial\theta}\Big)+\Big(\frac{1}{\sin\theta}\Big)^2\frac{\partial^2}{\partial\varphi^2}.
	\]
	It is well-known that the spherical harmonic functions $\{Y_{\ell,m}: \ell\in\N_{0}, m=-\ell,\dots,\ell \}$ are the eigenfunctions of the negative Laplace-Beltrami operator $-\Delta_{\bS^2}$ on the sphere $\bS^2$ with eigenvalues 
	\begin{align}\label{lam}
		\lambda_{\ell}:=\ell(\ell+1),\quad \ell\in\N_{0},
	\end{align}
	that is for $\ell\in\N_{0}$, $m=-\ell,\dots,\ell$, 
	\begin{align}\label{beltrami}
		-\Delta_{\bS^2}Y_{\ell,m}=\lambda_{\ell}Y_{\ell,m}.
	\end{align}
	An arbitrary complex-valued function $f\in L_{2}(\bS^2)$ can be expanded in terms of a Fourier-Laplace series, in the $ L_{2}(\bS^2)$ sense,
	\begin{align}\label{Expansion}
		f&=\sum_{\ell=0}^{\infty}\sum_{m=-\ell}^{\ell}\widehat{f}_{\ell,m}Y_{\ell,m},\ \text{with}\  \widehat{f}_{\ell,m}:=\int_{\bS^2}f(\bsx)\overline{Y_{\ell,m}(\bsx)}d\mu(\bsx).
	\end{align}
	The $\widehat{f}_{\ell,m}$, for $\ell\in\N_{0},m=-\ell,\dots,\ell$, are known as the Fourier coefficients for the function $f$ under the Fourier basis $Y_{\ell,m}$.
	
	By Parseval’s theorem, for $f\in L_{2}(\bS^2)$, there holds 
	\[
	\|f\|_{L_{2}(\bS^2)}^2= \sum_{\ell=0}^{\infty}\sum_{m=-\ell}^{\ell} |\widehat{f}_{\ell,m} |^2.
	\]
	Note that by the properties of the spherical harmonic functions, the Fourier-Laplace series \eqref{Expansion} of any real-valued $f\in L_{2}(\bS^2)$ takes the form
	\begin{align*}
		f&=\sum_{\ell=0}^{\infty}\Big(\widehat{f}_{\ell,0}Y_{\ell,0}+2\sum_{m=1}^{\ell}\Big(\Re \widehat{f}_{\ell,m}\Re Y_{\ell,m}-\Im \widehat{f}_{\ell,m}\Im Y_{\ell,m}\Big)\Big).
	\end{align*}
	
	\subsection{Isotropic random fields on the sphere}
	This subsection introduces isotropic Gaussian random fields on the sphere $\bS^2$ and their expansions in terms of complex spherical harmonics \cite{LeGia2020,LanSch15}. 
	
	We denote by $\mathfrak{B}(\bS^2)$ the Borel $\sigma$-algebra on the sphere $\bS^2$ and $SO(3)$ the rotation group on $\RR^3$. 
	An $\mathcal{F}\otimes\mathfrak{B}(\bS^2)$-measurable function $\xi: \Omega\times\bS^2\to\RR$ is called a (jointly measurable) real-valued random field on $\bS^2$. In the paper, we assume that the random field $\xi\in L_{2}(\Omega\times\bS^2)$.
	Let  $L_{2}(\Omega\times\bS^2,\mathbb{P}\otimes\mu)=: L_{2}(\Omega\times\bS^2)$ be the real-valued $L_2$-space of random fields on $\Omega\times\bS^2$, with the product measure $\mathbb{P}\otimes\mu$. Using Fubini’s theorem, the inner product of $\xi_1,\xi_2\in L_{2}(\Omega\times\bS^2)$ can be written as
	\begin{align*}
		\langle \xi_1,\xi_2\rangle_{L_{2}(\Omega\times\bS^2)}:&=\int_{\Omega\times\bS^2}\xi_1(\omega,\bsx) \xi_2(\omega,\bsx)d(\mathbb{P}\otimes\mu)(\omega,\bsx)\\
		&= \bE\left[	\langle \xi_1,\xi_2\rangle_{L_{2}(\bS^2)}\right],
	\end{align*}
	and the norm of $\xi_1\in L_{2}(\Omega\times\bS^2)$ is
	\[
	\|\xi_1\|_{L_{2}(\Omega\times\bS^2)}:=\sqrt{\langle \xi_1,\xi_1\rangle_{L_{2}(\Omega\times\bS^2)}}.
	\]
	In particular, $\xi(\omega,\cdot)\in L_{2}(\bS^2)$ $\mathbb{P}$-a.s., since 
	$\|\xi\|_{L_{2}(\Omega\times\bS^2)}^2=\bE\big[	\|\xi\|_{L_{2}(\bS^2)}^2\big]$. Note that an arbitrary random field $\xi(\omega,\cdot)\in L_{2}(\bS^2)$ $\mathbb{P}$-a.s., admits an expansion in terms of spherical harmonics, $\mathbb{P}$-a.s.,
	\begin{align}\label{xi}
		\xi=\sum_{\ell=0}^\infty \sum_{m=-\ell}^{\ell}  \widehat{\xi}_{\ell,m} Y_{\ell,m},\quad \widehat{\xi}_{\ell,m}= \int_{\bS^2}\xi(\bsx)\overline{Y_{\ell,m}(\bsx)}d\mu(\bsx),
	\end{align}
where the convergence is in the $L_{2}(\Omega\times\bS^2)$ sense.

	
In this paper, we consider a particular class of random fields called isotropic random fields. Following \cite{MarPec11}, the random field $\xi$ is strongly isotropic if for any $k\in\N$ and for all sets of $k$ points $\bsx_1,\dots,\bsx_k\in\bS^2$, and for any rotation $R\in SO(3)$, the joint distributions of $\xi(\bsx_1),\dots,\xi(\bsx_k)$ and $\xi(R\bsx_1),\dots,\xi(R\bsx_k)$ coincide. 
	
The random field $\xi$ is called a $2$-weakly, isotropic random field if for all $\bsx\in\bS^2$, the second moment of $\xi(\bsx)$ is finite, that is $\bE[|\xi(\bsx)|^2]<\infty$, and for all $\bsx_1,\bsx_2\in\bS^2$, and for any rotation $R\in SO(3)$, (see \cite{MarPec11}), there holds
\[
	\bE [\xi(\bsx_1)]=\bE [\xi(R\bsx_1)],\ \bE [\xi(\bsx_1)\xi(\bsx_2)]=\bE [\xi(R\bsx_1)\xi(R\bsx_2)].
\]

If $\xi$ is a centered, 2-weakly isotropic random field, then the Fourier coefficients $\widehat{\xi}_{\ell,m}$ of $\xi$ in \eqref{xi} are uncorrelated mean-zero complex-valued random variables (see Remark 5.15 in \cite{MarPec11}), i.e., for $\ell\in\N_{0}$, $m=-\ell,\dots,\ell$,
	\[
	\bE\left[\widehat{\xi}_{\ell,m}\right]=0,\quad \bE\left[\widehat{\xi}_{\ell,m}\overline{
		\widehat{\xi}_{\ell^{\prime},m^{\prime}}}\right]=\calC_{\ell,m}\delta_{\ell\ell^\prime}\delta_{mm^\prime},
	\]
	where the $\calC_{\ell,m}$ are non-negative numbers and $\delta_{\ell\ell^\prime}$ is the Kronecker delta function. The sequence $\{\calC_{\ell,m}: \ell\in\N_{0},m=-\ell,\dots,\ell\}$ is called the angular power
	spectrum of the random field $\xi$. It follows that for each $\bsx\in\bS^2$, $\xi(\bsx)$ is a centered random variable and its covariance function takes the form
 \begin{align*}
     \bE\left[\xi(\bsx)\xi(\bsy)\right]=\sum_{\ell=0}^\infty \sum_{m=-\ell}^{\ell}\calC_{\ell,m}Y_{\ell,m}(\bsx)\overline{Y_{\ell,m}(\bsy)},\quad \bsx,\bsy\in\bS^2.
 \end{align*}
 
For a centered, 2-weakly isotropic random field $\xi$ it holds, for $\bsx,\bsy\in\bS^2$, that $\bE[\xi(\bsx)]=0$ and $\bE\left[\xi(\bsx)\xi(\bsy)\right]$ is rotationally invariant. In this case the angular power spectrum is independent of $m$, that is $\calC_{\ell,m}=\calC_{\ell}$, $\ell\in\N_{0}$, $m=-\ell,\dots,\ell$, and can be written as
\begin{equation}\label{eq:defCell}
\calC_{\ell}=\bE[|\widehat{\xi}_{\ell,m}|^2].    
\end{equation}
Thus, the covariance function $\bE\left[\xi(\bsx)\xi(\bsy)\right]$ takes the form, using the addition theorem for spherical harmonics (see equation \eqref{addition}),
	\begin{align}\label{EqCl}
		\bE\left[\xi(\bsx)\xi(\bsy)\right]=\sum_{\ell=0}^{\infty} (2\ell+1)\calC_{\ell}P_{\ell}(\bsx\cdot\bsy).
	\end{align}
Since $|P_{\ell}(\cdot)|\leq1$, for $\ell\to\infty$, the series \eqref{EqCl} is convergent provided that
\begin{equation}\label{eq:condCell}
    \sum_{\ell=0}^{\infty} (2\ell+1)\calC_{\ell}<\infty.
\end{equation}
Throughout all subsequent sections, we will assume that the condition given in equation \eqref{eq:condCell} remains valid.

	\noindent
A random field $\xi$ on $\bS^2$ is Gaussian if for each $k\in\N=\{1,2,3,\dots\}$ and each $k$ points $\bsx_1,\dots,\bsx_k\in\bS^2$, the vector $(\xi(\bsx_1),\dots,\xi(\bsx_k))$ has a multivariate Gaussian distribution. 
	
Note that a Gaussian random field $\xi$ is strongly isotropic if and only if it is $2$-weakly isotropic (see \cite{MarPec11}). 
In this paper, we consider random fields that are Gaussian and strongly isotropic.

We recall the following result (see \cite{LanSch15}, Corollary 2.5).
	\begin{prop}
		{\rm{\cite{LanSch15}}}\label{Coro}
		Let $\{\calC_{\ell}: \ell\in\N_{0}\}$ be the angular power spectrum of a centered, $2$-weakly isotropic Gaussian random random field $\xi$ on $\bS^2$. Then $\xi$ admits the expansion
		\[
		\xi=\sum_{\ell=0}^\infty \sum_{m=-\ell}^{\ell} \widehat{\xi}_{\ell,m} Y_{\ell,m},
		\]
		which is convergent in the $L_{2}(\Omega\times\bS^2)$ sense, where $Y_{\ell,m},\ \ell\in\N_{0}, m=-\ell,\dots,\ell$, are spherical harmonic functions and the elements of the set $\mathcal{V}:=\{\widehat{\xi}_{\ell,m}: \ell\in\N_{0}, m=-\ell,\dots,\ell\}$ are complex-valued, centered Gaussian random variables satisfying:
\begin{itemize}
\item The subset $\mathcal{V}^{\prime}:= \{\widehat{\xi}_{\ell,m}: \ell\in\N_{0}, m=0,\dots,\ell\}\subset\mathcal{V}$ consists of {\bf{independent}}, complex-valued
			Gaussian random variables.
			
\item The elements of $\mathcal{V}^{\prime}$ with $m>0$ have the property that $\Re \widehat{\xi}_{\ell,m}$ and $\Im \widehat{\xi}_{\ell,m}$ are independent and $\mathcal{N}(0,\calC_\ell/2)$ distributed.
			
\item The elements of $\mathcal{V}^{\prime}$ with $m=0$ are real-valued, and the elements $\Re \widehat{\xi}_{\ell,0}$ are $\mathcal{N}(0,\calC_\ell)$ distributed.
			
			\item The elements of $\mathcal{V}$ with $m<0$ can be obtained from those of  $\mathcal{V}^{\prime}$ by applying the relations
			\[
			\Re \widehat{\xi}_{\ell,m}= (-1)^m \Re \widehat{\xi}_{\ell,-m}\ \text{and}\ \Im \widehat{\xi}_{\ell,m}= (-1)^{m+1} \Im \widehat{\xi}_{\ell,-m}.
			\]
		\end{itemize}
	\end{prop}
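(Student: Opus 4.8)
The plan is to deduce everything from three ingredients already available in the excerpt: the expansion \eqref{xi} together with its $L_{2}(\Omega\times\bS^2)$-convergence, the covariance relation $\bE[\widehat{\xi}_{\ell,m}\overline{\widehat{\xi}_{\ell',m'}}]=\calC_{\ell}\,\delta_{\ell\ell'}\delta_{mm'}$ derived above from $2$-weak isotropy, and the Gaussianity of $\xi$. First I would confirm the series converges in $L_{2}(\Omega\times\bS^2)$: since $\xi\in L_{2}(\Omega\times\bS^2)$ and $\{Y_{\ell,m}\}$ is an orthonormal basis of $L_{2}(\bS^2)$, Parseval gives $\|\xi\|^{2}_{L_{2}(\Omega\times\bS^2)}=\sum_{\ell,m}\bE[|\widehat{\xi}_{\ell,m}|^2]=\sum_{\ell}(2\ell+1)\calC_{\ell}<\infty$ by \eqref{eq:condCell}, so the partial sums form a Cauchy sequence and converge. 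Next I would argue that the whole family $\{\widehat{\xi}_{\ell,m}\}$ is jointly (complex) Gaussian: each coefficient $\widehat{\xi}_{\ell,m}=\int_{\bS^2}\xi(\bsx)\overline{Y_{\ell,m}(\bsx)}\,d\mu(\bsx)$ is an $L_{2}(\Omega)$-limit of Riemann-type sums, each of which is a finite complex-linear combination of the jointly Gaussian values $\xi(\bsx_1),\dots,\xi(\bsx_k)$ and hence Gaussian, and the class of Gaussian vectors is closed under $L_{2}$-limits; thus any finite collection of real and imaginary parts of the coefficients is jointly Gaussian.

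Then I would record the reality constraint. From the definition $Y_{\ell,-m}=(-1)^m\overline{Y_{\ell,m}}$ and the fact that $\xi$ is real-valued, a direct substitution in the integral gives $\widehat{\xi}_{\ell,-m}=(-1)^m\overline{\widehat{\xi}_{\ell,m}}$; splitting into real and imaginary parts yields exactly the stated relations for $m<0$ and shows that $\widehat{\xi}_{\ell,0}$ is real. Hence the variables with $m<0$ are determined by those with $m\geq0$, and it suffices to analyse $\mathcal{V}'$. Using this relation together with the covariance formula I would compute the pseudo-covariance $\bE[\widehat{\xi}_{\ell,m}^2]=(-1)^m\bE[\widehat{\xi}_{\ell,m}\overline{\widehat{\xi}_{\ell,-m}}]=0$ for $m\neq0$ (since $-m\neq m$), while $\bE[|\widehat{\xi}_{\ell,m}|^2]=\calC_{\ell}$. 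Writing $\widehat{\xi}_{\ell,m}=\Re\widehat{\xi}_{\ell,m}+\mi\,\Im\widehat{\xi}_{\ell,m}$ and separating real and imaginary parts in these two identities gives $\bE[(\Re\widehat{\xi}_{\ell,m})^2]=\bE[(\Im\widehat{\xi}_{\ell,m})^2]=\calC_{\ell}/2$ and $\bE[\Re\widehat{\xi}_{\ell,m}\,\Im\widehat{\xi}_{\ell,m}]=0$; for $m=0$ the reality of $\widehat{\xi}_{\ell,0}$ delivers variance $\calC_{\ell}$ directly, which accounts for the three distributional statements.

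Finally I would upgrade ``uncorrelated'' to ``independent''. Because the whole family is jointly Gaussian, it is enough to check that all pairwise covariances between real and imaginary parts of distinct members of $\mathcal{V}'$ vanish. For two indices $(\ell,m)\neq(\ell',m')$ with $m,m'\geq0$, the covariance formula gives $\bE[\widehat{\xi}_{\ell,m}\overline{\widehat{\xi}_{\ell',m'}}]=0$, and the reality relation converts $\bE[\widehat{\xi}_{\ell,m}\widehat{\xi}_{\ell',m'}]=(-1)^{m'}\bE[\widehat{\xi}_{\ell,m}\overline{\widehat{\xi}_{\ell',-m'}}]$ into a multiple of $\calC_{\ell}\delta_{\ell\ell'}\delta_{m,-m'}$, which also vanishes since $m,m'\geq0$ cannot satisfy $m=-m'$ unless both are zero (the same index). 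Decomposing into real and imaginary parts then shows every relevant covariance is zero, so jointly Gaussian plus pairwise uncorrelated yields mutual independence of the members of $\mathcal{V}'$, completing the proof. I expect the only genuinely delicate step to be the rigorous justification of joint Gaussianity of the coefficients as $L_{2}$-limits of Gaussian Riemann sums (measurability and the approximation of $\int_{\bS^2}\xi\,\overline{Y_{\ell,m}}\,d\mu$ by such sums); the second-moment algebra and the uncorrelated-implies-independent passage are routine once joint Gaussianity is secured.
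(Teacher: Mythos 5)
Your proposal is correct. Note that the paper does not prove this proposition at all --- it is imported verbatim from Lang and Schwab (Corollary 2.5 of \cite{LanSch15}) --- so there is no in-paper argument to compare against; your reconstruction (joint Gaussianity of the coefficients as $L_2$-limits of Gaussian linear functionals of $\xi$, the reality relation $\widehat{\xi}_{\ell,-m}=(-1)^m\overline{\widehat{\xi}_{\ell,m}}$, the vanishing of the pseudo-covariance $\bE[\widehat{\xi}_{\ell,m}^2]=0$ for $m\neq 0$ to split $\calC_\ell$ evenly between real and imaginary parts, and uncorrelated-plus-jointly-Gaussian to get independence) is precisely the standard argument used there. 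The one step you rightly flag as delicate, the $L_2(\Omega)$-convergence of the approximating sums to $\int_{\bS^2}\xi\,\overline{Y_{\ell,m}}\,d\mu$, is covered by the mean-square continuity of $2$-weakly isotropic fields on $\bS^2$ (the Marinucci--Peccati result cited elsewhere in the paper), so no genuine gap remains.
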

	
	\begin{rem} \label{rem1-1}
		Let $\{\calC_{\ell}:\ell\in\N_{0}\}$ be the angular power spectrum of a centered, strongly isotropic Gaussian random field $\xi$ on $\bS^2$. Let for $\ell\in\N_0$,  
		\begin{align}\label{Z-iid}
			\mathcal{Z}_{\ell}:=\Big\{ Z_{\ell,0}^{(1)},Z_{\ell,1}^{(1)},\dots, Z_{\ell,\ell}^{(1)},
			Z_{\ell,1}^{(2)},Z_{\ell,2}^{(2)},\dots, Z_{\ell,\ell}^{(2)}\Big\},
		\end{align}
be a set of independent, real-valued, standard normally distributed random variables. Then, by Proposition {\rm\ref{Coro}}, the field $\xi$ can be represented, in the $L_{2}(\Omega\times\bS^2)$ sense, as
		\begin{align*}
			\xi:&\simeq\sum_{\ell=0}^{\infty}\Bigl( \sqrt{\calC_{\ell}}Z_{\ell,0}^{(1)}Y_{\ell,0}\notag\\
			&+\sqrt{2\calC_{\ell}}\sum_{m=1}^{\ell}\Bigl(Z_{\ell,m}^{(1)}\Re Y_{\ell,m}+Z_{\ell,m}^{(2)}\Im Y_{\ell,m}\Big)\Big),
		\end{align*}
		where $Z_{\ell,m_1}^{(1)}$, $Z_{\ell,m_2}^{(2)}\in\mathcal{Z}_{\ell}$, $\ell\in\N_0$, $m_1=0,\dots,\ell$, $m_2=1,\dots,\ell$.
	\end{rem}

	\subsection{Brownian motion}\label{sub2.3}
	
	The standard Brownian motion (or the Wiener process) $B(t),\ t\geq0$, with variance $1$ at $t=1$, is a centered Gaussian process on $\RR_{+}=[0,\infty)$ satisfying $B(0)=0$ and
	\[
	\bE\Big[\big\vert B(t)-B(s)\big\vert^2\Big]=\vert t-s\vert,\quad s,t\geq0.
	\]
	Let $B_{\tau}(t),\ t\geq\tau\geq0$, be a real-valued, time-delayed Brownian motion with $B_{\tau}(\tau)=0$ and variance $1$ at $t=\tau+1$. Note that it is easy to show that $B_{\tau}(t),\ t\geq\tau\geq0$, is a centered Gaussian process on $[\tau,\infty)$ satisfying
	\begin{equation}\label{eq:Btau}
		\bE\Big[\big\vert B_{\tau}(t)-B_{\tau}(s)\big\vert^2\Big]=\vert t-s\vert,\quad t,s\geq\tau.
	\end{equation}
	Putting $s=\tau$ in \eqref{eq:Btau} we have
	\begin{equation}\label{eq:Btau2}
		\bE\Big[\big( B_{\tau}(t)\big)^2\Big]=\vert t-\tau\vert,\quad t\geq\tau\geq0.
	\end{equation}
The case when $\tau=0$, $B_{0}(t),\ t\geq0$, represents the standard Brownian motion. We will for brevity write $B_{0}(t)$ as $B(t)$ if no confusion arises.


We define a time-delayed Brownian motion on the unit sphere $\bS^2$ as in \cite[Sections 4.1.1 and 4.1.2] {DaPrato}. 
Let $W_\tau(t)$, $\tau\geq0$, be an $L_2(\bS^2)$-valued time-delayed Brownian motion. 
Then, for each real-valued $f$ in $L_2(\mathbb{S}^2)$, the stochastic process $\langle W_\tau(t),f \rangle_{L_2(\bS^2)}$ is a real-valued, time-delayed Brownian motion. 

For arbitrary real-valued $f, g \in L_2(\bS^2)$, $t, s \ge \tau$,
\[
\bE \big[\langle W_\tau(t),f\rangle_{L_2(\bS^2)} \langle W_\tau(s),f\rangle_{L_2(\bS^2)}\big] = 
(t \wedge s) \bE \big[\langle W_\tau(\tau+1), f\rangle_{L_2(\bS^2)}^2\big]
\]
and
\[
\bE [\langle W_{\tau}(t),f\rangle_{L_2(\bS^2)} \langle W_{\tau}(s),g\rangle_{L_2(\bS^2)}] = (t \wedge s )
\bE [\langle Qf,g\rangle_{L_2(\bS^2)}],
\]
where $Q$ is the covariance operator for 
the law of $W_{\tau}(1+\tau)$, see \cite[Section 2.3.1] {DaPrato}. 

\begin{defin}\label{BRo}   
An $L_2(\bS^2)$-valued stochastic process $W_\tau(t)$, for $t \ge \tau$,
is called a Q-Wiener process if
\begin{enumerate}
\item $W_\tau(\tau) = 0$,
\item $W_\tau$ has continuous trajectories,
\item $W_\tau$ has independent increments,
\item $W_\tau(t)-W_\tau(s)$ follows the normal distribution $\mathcal{N}(0,(t-s)Q)$, $t\geq s\geq\tau$.
\end{enumerate}
\end{defin}


For the complete orthonormal basis 
$Y_{\ell,m}$, there is a sequence of non-negative numbers $\mathcal{A}_{\ell}$,
\[
Q Y_{\ell,m} = \mathcal{A}_{\ell} Y_{\ell,m},\ 
\ell=0,1,2,\ldots; m=-\ell,\ldots,\ell.
\]

\begin{prop}
If $Q$ is of trace class, i.e., ${\rm Tr}(Q)<\infty$, then the following condition
holds true
\begin{equation}\label{eq:condAell}
		\sum_{\ell=0}^{\infty} (2\ell+1)\calA_{\ell} < \infty.
\end{equation}    
\end{prop}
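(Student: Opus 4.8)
The plan is to use the spectral characterization of the trace. Since $Q$ is the covariance operator for the law of $W_\tau(1+\tau)$, it is a self-adjoint, non-negative operator, and by hypothesis the spherical harmonics $\{Y_{\ell,m}\}$ form a complete orthonormal system of eigenvectors with $Q Y_{\ell,m} = \calA_\ell Y_{\ell,m}$. I would begin by recalling that for a non-negative self-adjoint operator the trace is given, independently of the choice of complete orthonormal basis $\{\phi_j\}$, by ${\rm Tr}(Q) = \sum_j \langle Q\phi_j, \phi_j\rangle_{L_2(\bS^2)}$.

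The next step is to evaluate this sum in the eigenbasis $\{Y_{\ell,m}\}$ itself. For each admissible pair $(\ell,m)$ the eigenvalue relation together with orthonormality gives $\langle Q Y_{\ell,m}, Y_{\ell,m}\rangle_{L_2(\bS^2)} = \calA_\ell \|Y_{\ell,m}\|_{L_2(\bS^2)}^2 = \calA_\ell$. The essential bookkeeping point is the multiplicity: for each fixed $\ell$ there are exactly $2\ell+1$ admissible indices $m = -\ell,\dots,\ell$, all sharing the common eigenvalue $\calA_\ell$. Summing first over $m$ and then over $\ell$ therefore yields
\[
{\rm Tr}(Q) = \sum_{\ell=0}^{\infty}\sum_{m=-\ell}^{\ell}\calA_\ell = \sum_{\ell=0}^{\infty} (2\ell+1)\calA_\ell.
\]
The trace class hypothesis ${\rm Tr}(Q)<\infty$ then converts this identity directly into the claimed bound $\sum_{\ell=0}^{\infty}(2\ell+1)\calA_\ell<\infty$.

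There is no real analytic obstacle here. Because each $\calA_\ell\geq 0$, the double series has non-negative terms, so no issues of conditional convergence or rearrangement arise and the grouping of terms by fixed $\ell$ is automatically justified by Tonelli-type reasoning for non-negative series. The only point that genuinely requires care is the correct $(2\ell+1)$-fold accounting of the multiplicity of each eigenvalue $\calA_\ell$, which is exactly what produces the weight $(2\ell+1)$ in the final expression and makes the resulting condition comparable in form to the power-spectrum condition \eqref{eq:condCell}.
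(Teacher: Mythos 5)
Your proposal is correct and follows essentially the same argument as the paper: compute ${\rm Tr}(Q)=\sum_{\ell,m}\langle Q Y_{\ell,m},Y_{\ell,m}\rangle_{L_2(\bS^2)}$ in the eigenbasis and use the $(2\ell+1)$-fold multiplicity of each eigenvalue $\calA_\ell$. The extra remarks on non-negativity and rearrangement are fine but not needed beyond what the paper's one-line computation already contains.
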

\begin{proof}
By definition,
\begin{equation}
{\rm Tr}(Q) = 
\sum_{\ell=0}^\infty \sum_{m=-\ell}^\ell
    \langle Q Y_{\ell,m}, Y_{\ell,m}\rangle_{L_2(\bS^2)} = 
    	\sum_{\ell=0}^{\infty} (2\ell+1)\calA_{\ell} < \infty.
     \end{equation}
\end{proof}
In what follows we will assume that the condition \eqref{eq:condAell} remains valid.

Let for $\ell\in\N_0$, $t\geq\tau\geq0$,  
	\begin{align}\label{B-iid}
		\mathcal{B}_{\ell,\tau}(t):=\Big\{ \beta_{\ell,0,\tau}^{(1)}(t),\beta_{\ell,1,\tau}^{(1)}(t),\dots, \beta_{\ell,\ell,\tau}^{(1)}(t),
		\beta_{\ell,1,\tau}^{(2)}(t),\beta_{\ell,2,\tau}^{(2)}(t),\dots, \beta_{\ell,\ell,\tau}^{(2)}(t)\Big\},
	\end{align}
	be a set of independent, real-valued time-delayed Brownian motions with variances $1$ when $t=\tau+1$.

Using the above setting, we have the following proposition. Its proof is similar to the one from \cite[Proposition 4.3] {DaPrato}.

\begin{prop}\label{newPropW}
The noise $W_{\tau}(t),\ t\geq\tau\geq0$, admits the following representation
\begin{align}\label{Win1}
W_{\tau}(t)&:= \sum_{\ell=0}^\infty \sum_{m=-\ell}^\ell \widehat{W}_{\ell,m,\tau}(t) Y_{\ell,m},\quad t\geq\tau\geq0,
\end{align}
where $\widehat{W}_{\ell,m,\tau}(t)$, $t\geq\tau\geq0$, $\ell\in\N_{0}$, $m=-\ell,\dots,\ell$, are defined by 





 \noindent
\begin{align}\label{WincomplexFourier}
\widehat{W}_{\ell,m,\tau}&:=
	\begin{cases} 
			\sqrt{\calA_{\ell}}\beta_{\ell,0,\tau}^{(1)} , & m=0,	\\
			\sqrt{\frac{\calA_{\ell}}{2}}\Big(\beta_{\ell,m,\tau}^{(1)}
			- \mi \beta_{\ell,m,\tau}^{(2)}\Big), & m=1,\ldots,\ell, \\
			(-1)^m\sqrt{\frac{\calA_{\ell}}{2}}\Big(\beta_{\ell,|m|,\tau}^{(1)}
			+ \mi \beta_{\ell,|m|,\tau}^{(2)}\Big), & m=-\ell,\ldots,-1,
		\end{cases}
	\end{align}
	where for $\ell\in\N_{0}, m_1=0,\dots,\ell$, $m_2=1,2,\dots,\ell$, $\beta_{\ell,m_1,\tau}^{(1)},\beta_{\ell,m_2,\tau}^{(2)}\in\mathcal{B}_{\ell,\tau}$ and $\mathcal{B}_{\ell,\tau}$ is defined by \eqref{B-iid}.
\end{prop}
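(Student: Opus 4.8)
The plan is to realize \eqref{Win1} as the Karhunen--Lo\`{e}ve expansion of the $Q$-Wiener process with respect to the orthonormal basis $\{Y_{\ell,m}\}$, exactly along the lines of \cite[Proposition 4.3]{DaPrato}, and then to identify the law of each Fourier coefficient with the prescribed combination of independent time-delayed Brownian motions. First I would define the coefficients by projection, $\widehat{W}_{\ell,m,\tau}(t) := \langle W_\tau(t), Y_{\ell,m}\rangle_{L_2(\bS^2)}$; since $\{Y_{\ell,m}\}$ is a complete orthonormal system in $L_2(\bS^2)$, the identity \eqref{Win1} holds in $L_2(\bS^2)$ pointwise in $t$, and it remains to determine the joint law of the processes $t\mapsto\widehat{W}_{\ell,m,\tau}(t)$ and to verify convergence in $L_2(\Omega\times\bS^2)$.

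The key reduction is to pass from the complex coefficients to real Gaussian processes. Since $W_\tau$ is real-valued, writing $Y_{\ell,m}=\Re Y_{\ell,m}+\mi\,\Im Y_{\ell,m}$ gives $\Re\widehat{W}_{\ell,m,\tau}(t)=\langle W_\tau(t),\Re Y_{\ell,m}\rangle_{L_2(\bS^2)}$ and $\Im\widehat{W}_{\ell,m,\tau}(t)=-\langle W_\tau(t),\Im Y_{\ell,m}\rangle_{L_2(\bS^2)}$, while the relation $Y_{\ell,-m}=(-1)^m\overline{Y_{\ell,m}}$ together with the reality of $W_\tau$ forces the conjugate symmetry $\widehat{W}_{\ell,-m,\tau}=(-1)^m\overline{\widehat{W}_{\ell,m,\tau}}$, which accounts for the $m<0$ branch of \eqref{WincomplexFourier}. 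This reduces the problem to the real-valued processes $\langle W_\tau(t),\Re Y_{\ell,m}\rangle_{L_2(\bS^2)}$ and $\langle W_\tau(t),\Im Y_{\ell,m}\rangle_{L_2(\bS^2)}$ indexed by $\ell\in\N_0$, $m=0,\dots,\ell$.

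Next I would compute the covariances of these real processes. They are jointly Gaussian, being continuous linear images of the Gaussian process $W_\tau$ (Definition~\ref{BRo}), so it suffices to identify their second-order structure. Using the covariance identity for $W_\tau$ together with $QY_{\ell,m}=\calA_\ell Y_{\ell,m}$ --- whence $Q\Re Y_{\ell,m}=\calA_\ell\Re Y_{\ell,m}$ and $Q\Im Y_{\ell,m}=\calA_\ell\Im Y_{\ell,m}$, since $\calA_\ell$ is real and $Q$ preserves real fields --- and the orthogonality relations $\langle\Re Y_{\ell,m},\Re Y_{\ell',m'}\rangle_{L_2(\bS^2)}=\tfrac12\delta_{\ell\ell'}\delta_{mm'}$ and $\langle\Im Y_{\ell,m},\Im Y_{\ell',m'}\rangle_{L_2(\bS^2)}=\tfrac12\delta_{\ell\ell'}\delta_{mm'}$ for $m,m'\geq1$, $\langle\Re Y_{\ell,m},\Im Y_{\ell',m'}\rangle_{L_2(\bS^2)}=0$, and $\langle Y_{\ell,0},Y_{\ell',0}\rangle_{L_2(\bS^2)}=\delta_{\ell\ell'}$ (all obtained from orthonormality of $\{Y_{\ell,m}\}$ and $\overline{Y_{\ell,m}}=(-1)^mY_{\ell,-m}$), I would read off that each of these real processes is a centered time-delayed Brownian motion, with variance at $t=\tau+1$ equal to $\calA_\ell$ for $m=0$ and to $\calA_\ell/2$ for $m\geq1$, and that distinct ones are mutually uncorrelated. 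Being jointly Gaussian and uncorrelated, they are independent, which lets me set $\Re\widehat{W}_{\ell,m,\tau}=\sqrt{\calA_\ell/2}\,\beta^{(1)}_{\ell,m,\tau}$ and $-\Im\widehat{W}_{\ell,m,\tau}=\sqrt{\calA_\ell/2}\,\beta^{(2)}_{\ell,m,\tau}$ for $m\geq1$ (and $\widehat{W}_{\ell,0,\tau}=\sqrt{\calA_\ell}\,\beta^{(1)}_{\ell,0,\tau}$) with the $\beta$'s independent, recovering \eqref{WincomplexFourier}. Finally, the identity $\bE\big[|\widehat{W}_{\ell,m,\tau}(t)|^2\big]=\calA_\ell(t-\tau)$ yields $\sum_{\ell}\sum_{m=-\ell}^{\ell}\bE[|\widehat{W}_{\ell,m,\tau}(t)|^2]=(t-\tau)\sum_{\ell}(2\ell+1)\calA_\ell<\infty$ by \eqref{eq:condAell}, so the series \eqref{Win1} converges in $L_2(\Omega\times\bS^2)$.

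I expect the main obstacle to be the bookkeeping in the complex-to-real passage: getting every sign and every factor of $\tfrac12$ correct in the orthogonality relations and in the conjugate-symmetry constraint, and then matching these against the three cases of \eqref{WincomplexFourier}. A secondary technical point is to confirm that the assembled series genuinely defines a process satisfying all four requirements of Definition~\ref{BRo} --- in particular continuity of trajectories, which follows because each $\beta_{\ell,m,\tau}$ has continuous paths and the tail of the series is controlled uniformly by \eqref{eq:condAell} --- rather than merely reproducing the correct covariance.
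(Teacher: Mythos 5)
Your proposal is correct and follows essentially the same route as the paper, which itself only remarks that the proof is "similar to the one from \cite[Proposition 4.3]{DaPrato}" --- i.e., the eigenbasis (Karhunen--Lo\`eve) expansion of the $Q$-Wiener process, with the complex-to-real bookkeeping and the trace condition \eqref{eq:condAell} giving convergence in $L_2(\Omega\times\bS^2)$. Your covariance computations (the factors of $\tfrac12$, the conjugate symmetry $\widehat{W}_{\ell,-m,\tau}=(-1)^m\overline{\widehat{W}_{\ell,m,\tau}}$, and the variance $\calA_\ell(t-\tau)$) all check out against \eqref{WincomplexFourier} and the paper's subsequent norm computation.
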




\indent
It follows, using \eqref{WincomplexFourier}, that the expansion \eqref{Win1} can be written, in the $L_{2}(\Omega\times\bS^2)$ sense, as 
	\begin{align}\label{Win}
		W_{\tau}(t)&\simeq\sum_{\ell=0}^\infty\Bigl( \sqrt{\calA_{\ell}}\beta_{\ell,0,\tau}^{(1)}(t)Y_{\ell,0}\notag\\
		&+\sqrt{2\calA_{\ell}}\sum_{m=1}^{\ell}\Bigl(\beta_{\ell,m,\tau}^{(1)}(t)\Re Y_{\ell,m}+\beta_{\ell,m,\tau}^{(2)}(t)\Im Y_{\ell,m}\Big)\Big),\quad t\geq\tau\geq0.
	\end{align}

Since by \eqref{eq:condAell}, $\sum_{\ell=0}^{\infty}(2\ell+1)\calA_{\ell}<\infty$, we obtain, using \eqref{Win}, 
	\begin{align*}
		&\Vert W_{\tau}(t)\Vert_{L_{2}(\Omega\times\bS^2)}^2=\sum_{\ell=0}^\infty\Bigl( \calA_{\ell}\bE\big[(\beta_{\ell,0,\tau}^{(1)}(t))^2\big]\Vert Y_{\ell,0}\Vert_{L_{2}(\bS^2)}^2\\
		&+2\calA_{\ell}\sum_{m=1}^{\ell}\Bigl(\bE\big[(\beta_{\ell,m,\tau}^{(1)}(t))^2\big]\Vert\Re Y_{\ell,m}\Vert_{L_{2}(\bS^2)}^2+\bE\big[(\beta_{\ell,m,\tau}^{(2)}(t))^2\big]\Vert\Im Y_{\ell,m}\Vert_{L_{2}(\bS^2)}^2\Big)\Big)\\
		&=(t-\tau)\sum_{\ell=0}^{\infty} (2\ell+1)\calA_{\ell}<\infty,
	\end{align*}
where the second step uses \eqref{eq:Btau2}. Thus the noise $W_\tau$ is well-defined in $L_{2}(\Omega\times\bS^2)$.

Note that the sequence $\{\calA_\ell:\ell\in\N_{0}\}$ is called the angular power spectrum of $W_{\tau}$. 	

\subsection{Stochastic integrals of Mittag-Leffler functions}\label{Sec3}
	
	In this subsection, we provide some tools and technical results from the theory of stochastic integrals. 
	
It is well-known (see \cite{Kuo2006}) that for a non-random, measurable function $g$ in $L_{2}([s,t])$, and a real-valued Brownian motion $B(t)$, stochastic integrals $\int_{s}^{t}g(u)dB(u)$, are defined as Wiener integrals. We can similarly define stochastic integrals, in the $L_{2}(\Omega\times\bS^2)$ sense, of the form $\int_{s}^{t}g(u)dW_{\tau}(u)$, $t>s\geq\tau$.
	
The following definition states that an $L_{2}(\bS^2)-$valued stochastic integral $\int_{s}^{t}g(u)dW_{\tau}(u)$ can be defined as an expansion in spherical harmonics with random coefficients of the form $\int_s^t g(u)d\widehat{W}_{\ell,m,\tau}(u)$.

\begin{defin}\label{def1}
Let $ W_{\tau}$ be an $L_2(\bS^2)$-valued time-delayed Brownian motion on $\bS^2$. Let $\widehat{W}_{\ell,m,\tau}(t)$, $t\geq\tau$, $\ell\in\N_{0}$, $m=-\ell,\dots,\ell$, be the Fourier coefficients for $ W_{\tau}$ defined by \eqref{WincomplexFourier}. For $t>s\geq\tau$, the stochastic integral $\int_s^t g(u)dW_{\tau}(u)$ for a measurable function $g$ in $L_{2}([s,t])$ is defined, in the $L_{2}(\Omega\times\bS^2)$ sense, by
		\[
		\int_s^t g(u)dW_{\tau}(u):= \sum_{\ell=0}^\infty \sum_{m=-\ell}^\ell\left( \int_s^t g(u)d\widehat{W}_{\ell,m,\tau}(u)\right) Y_{\ell,m}.
		\]
	\end{defin}
	
	The following result gives expressions for stochastic integrals of the form $\int_s^t g(u)dW_{\tau}(u)$ in terms of the angular power spectrum of $W_{\tau}$.
	\begin{prop}\label{Prop1}
		Let $ W_{\tau}$ be an $L_2(\bS^2)$-valued time-delayed Brownian motion. Let $\{\calA_\ell:\ell\in\N_{0}\}$ be the angular power spectrum of $W_{\tau}$. Then, for $t>s\geq\tau$, the stochastic integral $\int_s^t g(u)dW_{\tau}(u)$ given by Definition {\rm\ref{def1}} satisfies
		\[
		\bE	\left[ \bigg\|\int_s^t g(u)dW_{\tau}(u)  \bigg\|_{L_{2}(\bS^2)}^2\right]= \sum_{\ell=0}^{\infty}(2\ell+1)\calA_{\ell} \int_s^t |g(u)|^2 du.
		\]
	\end{prop}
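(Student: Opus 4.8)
The plan is to reduce the $L_2(\bS^2)$-valued isometry to the scalar Wiener-integral isometry through Parseval's identity. First I would apply Parseval's theorem to the spherical-harmonic expansion in Definition~\ref{def1}. Since $\{Y_{\ell,m}\}$ is orthonormal in $L_2(\bS^2)$,
\[
\bigg\|\int_s^t g(u)\,dW_\tau(u)\bigg\|_{L_2(\bS^2)}^2
= \sum_{\ell=0}^\infty\sum_{m=-\ell}^\ell \bigg|\int_s^t g(u)\,d\widehat{W}_{\ell,m,\tau}(u)\bigg|^2,
\]
where each complex-valued coefficient integral is well defined because, by \eqref{WincomplexFourier}, $\widehat{W}_{\ell,m,\tau}$ is a (complex) linear combination of the real-valued time-delayed Brownian motions $\beta^{(i)}_{\ell,m,\tau}$, so $\int_s^t g\,d\widehat{W}_{\ell,m,\tau}$ splits into ordinary real Wiener integrals for $g\in L_2([s,t])$. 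Taking expectations and exchanging $\bE$ with the doubly infinite sum---justified because every summand is non-negative, so Tonelli applies---reduces the claim to computing $\bE\big[|\int_s^t g\,d\widehat{W}_{\ell,m,\tau}|^2\big]$ for each pair $(\ell,m)$.

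Next I would evaluate this term using \eqref{WincomplexFourier}, splitting into the three cases $m=0$, $1\le m\le\ell$ and $-\ell\le m\le-1$. The scalar Wiener isometry gives $\bE\big[|\int_s^t g(u)\,d\beta^{(i)}_{\ell,m,\tau}(u)|^2\big]=\int_s^t|g(u)|^2\,du$, valid for the time-delayed Brownian motions $\beta^{(i)}_{\ell,m,\tau}$ exactly as for standard Brownian motion because they share the increment-variance property \eqref{eq:Btau}. For $m=0$ the coefficient $\widehat{W}_{\ell,0,\tau}=\sqrt{\calA_\ell}\,\beta^{(1)}_{\ell,0,\tau}$ is real, giving $\calA_\ell\int_s^t|g|^2\,du$. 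For $m\neq0$, writing $X=\int_s^t g\,d\beta^{(1)}_{\ell,|m|,\tau}$ and $Y=\int_s^t g\,d\beta^{(2)}_{\ell,|m|,\tau}$, the algebraic identity $|X\mp\mi Y|^2=X^2+Y^2$ (the cross terms cancel because $X,Y$ are real) together with the unimodular prefactor $(-1)^m$ yields $\tfrac{\calA_\ell}{2}\,\bE[X^2+Y^2]=\calA_\ell\int_s^t|g|^2\,du$ in both the $m>0$ and $m<0$ cases.

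Finally I would assemble the sum. Each fixed $\ell$ contributes the same value $\calA_\ell\int_s^t|g(u)|^2\,du$ for every one of its $2\ell+1$ indices $m$, so summing over $m$ gives $(2\ell+1)\calA_\ell\int_s^t|g|^2\,du$, and summing over $\ell$ produces the stated formula; the outer series converges by the standing assumption \eqref{eq:condAell}.

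I do not expect a genuinely hard step. The only points needing care are the interchange of expectation with the infinite sum, handled by non-negativity via Tonelli, and the remark that the Wiener isometry transfers verbatim to the time-delayed Brownian motions because of \eqref{eq:Btau}; the rest is bookkeeping across the three cases of \eqref{WincomplexFourier}.
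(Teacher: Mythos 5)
Your proposal is correct and follows essentially the same route as the paper: Parseval's identity to reduce to the Fourier coefficients, then It\^{o}'s (Wiener) isometry term by term, then summation over $m$ and $\ell$. The only difference is that you unpack the isometry for the complex coefficients explicitly via \eqref{WincomplexFourier}, whereas the paper invokes It\^{o}'s isometry for $\widehat{W}_{\ell,m,\tau}$ directly; this is just added detail, not a different argument.
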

	
	\begin{proof}
		By Definition \ref{def1} and Parseval's identity, we get
		\begin{align*}
			\bE	\left[ \bigg\|\int_s^t g(u)dW_{\tau}(u)  \bigg\|_{L_{2}(\bS^2)}^2\right]&=\sum_{\ell=0}^{\infty}\sum_{m=-\ell}^{\ell} \bE\bigg[\bigg|\int_s^t g(u)d\widehat{W}_{\ell,m,\tau}(u)\bigg|^2\bigg]\\
			&=\bigg(\int_s^t |g(u)|^2 du\bigg)\sum_{\ell=0}^{\infty}\calA_{\ell}\sum_{m=-\ell}^{\ell}1\\
			&= \bigg(\int_s^t |g(u)|^2 du\bigg)\sum_{\ell=0}^{\infty}(2\ell+1)\calA_{\ell},
		\end{align*}
where the second step uses It\^{o}'s isometry (see \cite{Kuo2006}) and the last step uses the properties of $Y_{\ell,m}$, which completes the proof.
\end{proof}
	
Let
	\begin{align}\label{EqML}
		E_{\alpha,\beta}(z):=\sum_{k=0}^{\infty}\dfrac{z^{k}}{\Gamma(\alpha k+\beta)},\quad \alpha,\beta>0,\ z\in\bC,
	\end{align}
 be the two-parametric Mittag-Leffler function \cite{Gorenflo2014}. For $\beta=1$, it reduces to the classical Mittag-Leffler function, i.e., $E_{\alpha,1}(z):=E_{\alpha}(z)$. Here $\Gamma(\cdot)$ stands for the gamma function and $\bC$ is the set of complex numbers.
	
The following upper bound holds true \cite[Theorem 1.6] {Podlubny}
\begin{align}\label{EMitagg}
			E_{\alpha,\beta}(-z)\leq \dfrac{C}{1+z},\quad \alpha<2,\ \beta\in\RR,\ C>0,\ z\geq0.
\end{align}

For $\alpha\in(0,1]$, let $M_\alpha$ be defined as
	\begin{align}\label{C2}
		M_\alpha:=
		\begin{cases} 
			(\Gamma(1+\alpha))^2\vert 2\alpha-1\vert^{-1}, & \alpha\neq \frac{1}{2}, \\
			(\Gamma(3/2))^2, & \alpha=\frac{1}{2}.
		\end{cases}
	\end{align}
	
	For $\ell\in\N_{0}$, $t\geq0$, and $\alpha\in(0,1]$, let $\sigma_{\ell,t,\alpha}^2$ be defined as
	\begin{align}\label{var}
		\sigma_{\ell,t,\alpha}^2:=\int_0^t (E_{\alpha}(-\lambda_{\ell}(t-u)^\alpha))^2 du=\int_0^t (E_{\alpha}(-\lambda_{\ell}r^\alpha))^2 dr,
	\end{align}
	where $E_{\alpha}(\cdot)$ is defined in \eqref{EqML} and  $\lambda_{\ell}$ is given by \eqref{lam}. 
	\begin{rem}\label{rem1}
		It is well-known that the Mittag-Leffler function $E_{\alpha}(-\lambda_{\ell}t^\alpha)$, $\alpha\in(0,1]$, $t>0$, is positive and decreasing, with
		$E_{\alpha}(0)=1$. Thus, for
		$t\geq0$, $\ell\in\N_{0}$, there holds $E_{\alpha}(-\lambda_{\ell}t^\alpha)\in(0,1]$,
		$(E_{\alpha}(-\lambda_{\ell}t^\alpha))^2\leq E_{\alpha}(-\lambda_{\ell}t^\alpha)$ and the integral defined in {\rm(\ref{var})} is finite, i.e.,  
		\[
		\sigma_{\ell,t,\alpha}^2=\int_0^t (E_{\alpha}(-\lambda_{\ell}r^\alpha))^2 dr\leq \int_0^t E_{\alpha}(-\lambda_{\ell}r^\alpha)dr\leq t<\infty.
		\]
	\end{rem}
	\begin{prop}\label{PropVar}
		Let $\alpha\in(0,1]$ and $\calI_{\ell,m,\alpha}^{(j)}(t)$, $j=1,2$, $\ell\in\N_{0}$, $m=0,\dots,\ell$, be stochastic integrals defined as 
		\begin{align}\label{Int}
			\mathcal{I}_{\ell,m,\alpha}^{(j)}(t):=\int_{0}^{t}E_{\alpha}(-\lambda_{\ell}(t-u)^{\alpha})d\beta_{\ell,m}^{(j)}(u),\quad t>0,
		\end{align}
		where for each $j=1,2$, the process $\beta_{\ell,m}^{(j)}(u)$ is an independent real-valued Brownian motion with variance $1$ at $u=1$. Then, for $m=0,\dots,\ell$, $\ell\in \N_{0}$, each stochastic integral in \eqref{Int} 
		is normally distributed with mean zero and variance $\sigma_{\ell,t,\alpha}^2$, where $\sigma_{\ell,t,\alpha}^2$ is given by {\rm \eqref{var}}.
	\end{prop}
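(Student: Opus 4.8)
The plan is to recognize that, for fixed $t$ and fixed $\ell$, the integrand $u\mapsto E_{\alpha}(-\lambda_{\ell}(t-u)^{\alpha})$ appearing in \eqref{Int} is a purely \emph{deterministic} (non-random) function of $u$ on $[0,t]$, since $\lambda_{\ell}$ is the deterministic eigenvalue \eqref{lam} and $E_{\alpha}$ is the fixed Mittag-Leffler function \eqref{EqML}. Consequently $\mathcal{I}_{\ell,m,\alpha}^{(j)}(t)$ is an ordinary Wiener integral of a non-random integrand against the real-valued Brownian motion $\beta_{\ell,m}^{(j)}$. The whole statement then reduces to the standard fact that the Wiener integral of a deterministic $L_{2}$ integrand is a centered Gaussian random variable whose variance is given by It\^{o}'s isometry.

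First I would verify that the integrand belongs to $L_{2}([0,t])$. Writing $g(u):=E_{\alpha}(-\lambda_{\ell}(t-u)^{\alpha})$ and using the substitution $r=t-u$ together with Remark \ref{rem1}, we obtain $\int_{0}^{t}|g(u)|^{2}\,du=\sigma_{\ell,t,\alpha}^{2}\leq t<\infty$, where $\sigma_{\ell,t,\alpha}^{2}$ is the quantity defined in \eqref{var}. Hence $g\in L_{2}([0,t])$ and the Wiener integral $\int_{0}^{t}g(u)\,d\beta_{\ell,m}^{(j)}(u)$ is well defined.

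Next I would invoke the construction of the Wiener integral as an $L_{2}(\Omega)$-limit of integrals of simple (step) functions. For a step function the integral is a finite linear combination of independent Gaussian increments of $\beta_{\ell,m}^{(j)}$, hence a centered Gaussian random variable; since an $L_{2}$-limit of centered Gaussian variables is again centered Gaussian, it follows that $\mathcal{I}_{\ell,m,\alpha}^{(j)}(t)$ is normally distributed with mean zero. Finally, the variance is computed by It\^{o}'s isometry (see \cite{Kuo2006}), yielding $\bE\big[(\mathcal{I}_{\ell,m,\alpha}^{(j)}(t))^{2}\big]=\int_{0}^{t}|g(u)|^{2}\,du=\sigma_{\ell,t,\alpha}^{2}$, which is exactly the claimed variance.

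There is essentially no serious obstacle here: because the integrand is deterministic, none of the subtleties of the general It\^{o} calculus arise, and the only point requiring care is the integrability check, which is supplied directly by Remark \ref{rem1}. The one thing I would state cleanly is that Gaussianity and the vanishing of the mean are inherited through the $L_{2}(\Omega)$-limit that defines the Wiener integral, so that the limiting random variable is genuinely $\mathcal{N}(0,\sigma_{\ell,t,\alpha}^{2})$.
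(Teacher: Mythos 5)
Your proposal is correct and follows essentially the same route as the paper: both verify that the integrand is a deterministic, square-integrable function (the paper notes boundedness and measurability, you use Remark \ref{rem1}), compute the variance via It\^{o}'s isometry, and conclude Gaussianity from the standard theory of Wiener integrals (the paper cites \cite{Kuo2006}, Theorem 2.3.4, for the fact whose proof-by-simple-functions you sketch explicitly).
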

	
	\begin{proof}
		Note that for $t>0$ and $\lambda_{\ell}\geq0$,  $E_{\alpha}(-\lambda_{\ell}t^\alpha)$ is a bounded measurable function on $\RR_{+}$. Then, by It\^o's isometry (see \cite{Kuo2006}), for $j=1,2$, there holds
		\begin{align*}
			\bE\left[(\calI_{\ell,m,\alpha}^{(j)}(t))^2\right]&=\bE	\left[ \bigg|\int_0^t E_{\alpha}(-\lambda_{\ell}(t-u)^\alpha)d\beta_{\ell,m}^{(j)}(u)  \bigg|^2\right]\notag\\
			&= \bE\bigg[\int_0^t |E_{\alpha}(-\lambda_{\ell}(t-u)^\alpha)|^2 du\bigg]\notag\\
			&=\int_0^{t} (E_{\alpha}(-\lambda_{\ell}r^\alpha))^2 dr=\sigma_{\ell,t,\alpha}^2.	
		\end{align*}
		By \cite{Kuo2006} (see Theorem 2.3.4),  we conclude that each stochastic integral defined in \eqref{Int} is a Gaussian random variable with mean zero and variance $\sigma_{\ell,t,\alpha}^2$ given by \eqref{var}, thus completing the proof.
	\end{proof}

	\begin{prop}
		For $t>0$ and $\ell\in\N$ we have
		\begin{align}\label{N-int40}
			\sigma_{\ell,t,\alpha}^2\leq \begin{cases} 
				\lambda_{\ell}^{-\frac{1}{\alpha}}+M_{\alpha}t^{1-2\alpha}\lambda_{\ell}^{-2}, & \alpha\in(0,\frac{1}{2}), \\
				\lambda_{\ell}^{-\frac{1}{\alpha}}(1+M_{\alpha}), & \alpha\in(\frac{1}{2},1],\\
				\lambda_{\ell}^{-2}	(1+M_{\frac{1}{2}}\ln(\lambda_{\ell}^2t)), & \alpha=\frac{1}{2},
			\end{cases}
		\end{align}	
		where $M_\alpha$ is given by \eqref{C2}.
	\end{prop}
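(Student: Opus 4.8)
The plan is to estimate the integrand $(E_{\alpha}(-\lambda_{\ell}r^{\alpha}))^2$ by two complementary bounds and to split the range of integration at the point $r_{0}:=\lambda_{\ell}^{-1/\alpha}$, where the argument $\lambda_{\ell}r^{\alpha}$ equals $1$. On the initial interval $[0,r_{0}]$ I would use the elementary bound $E_{\alpha}(-\lambda_{\ell}r^{\alpha})\le 1$ from Remark \ref{rem1}, which gives $\int_{0}^{r_{0}}(E_{\alpha}(-\lambda_{\ell}r^{\alpha}))^2\,dr\le r_{0}=\lambda_{\ell}^{-1/\alpha}$; this already produces the leading term $\lambda_{\ell}^{-1/\alpha}$ appearing in the first two cases. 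On the tail $[r_{0},t]$ I would instead invoke the decay estimate \eqref{EMitagg}, in the sharp form $E_{\alpha}(-z)\le \Gamma(1+\alpha)/(1+z)$; since $\lambda_{\ell}r^{\alpha}\ge 1$ there, this yields $(E_{\alpha}(-\lambda_{\ell}r^{\alpha}))^2\le (\Gamma(1+\alpha))^2\lambda_{\ell}^{-2}r^{-2\alpha}$, reducing the tail contribution to the deterministic integral $(\Gamma(1+\alpha))^2\lambda_{\ell}^{-2}\int_{r_{0}}^{t}r^{-2\alpha}\,dr$.

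The three cases then arise from evaluating $\int_{r_{0}}^{t}r^{-2\alpha}\,dr$ according to the sign of $1-2\alpha$. For $\alpha\in(0,\frac{1}{2})$ the exponent $1-2\alpha$ is positive, so dropping the negative lower-limit contribution gives $\int_{r_{0}}^{t}r^{-2\alpha}\,dr\le t^{1-2\alpha}/|1-2\alpha|$, and multiplying by $(\Gamma(1+\alpha))^2\lambda_{\ell}^{-2}$ produces exactly the term $M_{\alpha}t^{1-2\alpha}\lambda_{\ell}^{-2}$. For $\alpha\in(\frac{1}{2},1]$ the exponent is negative, the integral stays bounded as the upper limit grows, and dropping the now-negative upper-limit term bounds it by $r_{0}^{1-2\alpha}/(2\alpha-1)$; the key algebraic simplification $r_{0}^{1-2\alpha}=\lambda_{\ell}^{2-1/\alpha}$ cancels the factor $\lambda_{\ell}^{-2}$ and collapses the tail to $M_{\alpha}\lambda_{\ell}^{-1/\alpha}$, which combines with the leading term to give $\lambda_{\ell}^{-1/\alpha}(1+M_{\alpha})$. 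For the borderline $\alpha=\frac{1}{2}$ the integral is logarithmic, $\int_{r_{0}}^{t}r^{-1}\,dr=\ln(t/r_{0})=\ln(\lambda_{\ell}^{2}t)$ after substituting $r_{0}=\lambda_{\ell}^{-2}$, yielding the stated $\lambda_{\ell}^{-2}(1+M_{1/2}\ln(\lambda_{\ell}^{2}t))$.

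The main obstacle is pinning down the sharp constant $\Gamma(1+\alpha)$ in the tail estimate: the bound \eqref{EMitagg} as quoted only furnishes a generic constant $C$, whereas the constant $M_{\alpha}=(\Gamma(1+\alpha))^2|2\alpha-1|^{-1}$ in the conclusion forces $C=\Gamma(1+\alpha)$. I would therefore justify $E_{\alpha}(-z)\le \Gamma(1+\alpha)/(1+z)$ directly, checking equality at $z=0$ and matching the known large-$z$ asymptotics $E_{\alpha}(-z)\sim (z\,\Gamma(1-\alpha))^{-1}$ together with the reflection-formula inequality $\Gamma(1+\alpha)\Gamma(1-\alpha)=\pi\alpha/\sin(\pi\alpha)\ge 1$, valid for $\alpha\in(0,1)$. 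A minor point to dispose of is the regime $t\le r_{0}$: there the whole integral is bounded simply by $t\le r_{0}=\lambda_{\ell}^{-1/\alpha}$, which is dominated by the claimed right-hand side in the first two cases, and the $\alpha=\frac{1}{2}$ estimate is to be read in the range $\lambda_{\ell}^{2}t\ge 1$, where the logarithm is nonnegative.
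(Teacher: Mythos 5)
Your proposal is correct and follows essentially the same route as the paper: the paper substitutes $s=\lambda_{\ell}^{1/\alpha}r$ and splits the integral at $s=1$, which is exactly your split at $r_{0}=\lambda_{\ell}^{-1/\alpha}$, with the same trivial bound on the head and the same algebraic-decay bound $(\Gamma(1+\alpha))^{2}\lambda_{\ell}^{-2}r^{-2\alpha}$ on the tail. The only point to tidy up is the sharp constant: rather than arguing from the value at $z=0$ and the large-$z$ asymptotics (which does not by itself establish the inequality on all of $(0,\infty)$), you should simply invoke the bound $E_{\alpha}(-s^{\alpha})\leq (1+(\Gamma(1+\alpha))^{-1}s^{\alpha})^{-1}$ from \cite{Simon2014} (Theorem 4), which is exactly \eqref{N-Simon} in the paper.
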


	\begin{proof}
		Using \eqref{var} with the substitution $s=\lambda_{\ell}^{\frac{1}{\alpha}}r$ we get
		\begin{align}\label{N-Sig}
			\sigma_{\ell,t,\alpha}^2=\lambda_{\ell}^{-\frac{1}{\alpha}}\int_{0}^{\lambda_{\ell}^{\frac{1}{\alpha}}t}(E_{\alpha}(-s^\alpha))^2ds.
		\end{align}

		\noindent
		Suppose that $t\lambda_{\ell}^{\frac{1}{\alpha}}>1$ and write, using \eqref{N-Sig},
		\begin{align}\label{N-int35}
			\sigma_{\ell,t,\alpha}^2&=\lambda_{\ell}^{-\frac{1}{\alpha}}\Big(\int_{0}^{1}(E_{\alpha}(-s^\alpha))^2ds+\int_{1}^{\lambda_{\ell}^{\frac{1}{\alpha}}t}(E_{\alpha}(-s^\alpha))^2ds\Big)\notag\\
			&\leq\lambda_{\ell}^{-\frac{1}{\alpha}}\Big(1+(\Gamma(1+\alpha))^2\int_{1}^{\lambda_{\ell}^{\frac{1}{\alpha}}t}s^{-2\alpha}ds\Big),
		\end{align}
		where the second step uses Remark \ref{rem1} and the upper bound (see \cite{Simon2014}, Theorem 4), 
		\begin{align}\label{N-Simon}
			E_{\alpha}(-s^\alpha)&\leq \dfrac{1}{1+(\Gamma(1+\alpha))^{-1}s^\alpha}\leq\Gamma(1+\alpha)s^{-\alpha},\quad s>0.
		\end{align}	
		\noindent
		Note that if $\alpha\in(0,\frac{1}{2})$, then	
		\begin{align}\label{N-int37}
			\int_{1}^{\lambda_{\ell}^{\frac{1}{\alpha}}t}s^{-2\alpha} ds&= \dfrac{1}{1-2\alpha}\Big(\Big(\lambda_{\ell}^{\frac{1}{\alpha}}t\Big)^{1-2\alpha}-1\Big)\notag\\
			&\leq \dfrac{1}{1-2\alpha}\Big(\lambda_{\ell}^{\frac{1}{\alpha}}t\Big)^{1-2\alpha}\notag\\
			&\leq \dfrac{1}{1-2\alpha}t^{1-2\alpha}\lambda_{\ell}^{\frac{1}{\alpha}-2}.
		\end{align}	
		Similarly if $\alpha\in(\frac{1}{2},1]$, then	
		\begin{align}\label{N-int38}
			\int_{1}^{\lambda_{\ell}^{\frac{1}{\alpha}}t}s^{-2\alpha} ds&= \dfrac{1}{2\alpha-1}\Big(1-\Big(\lambda_{\ell}^{\frac{1}{\alpha}}t\Big)^{1-2\alpha}\Big)\notag\\
			&\leq \dfrac{1}{2\alpha-1}.
		\end{align}	
		For the critical case $\alpha=\frac{1}{2}$,
		\begin{align}\label{N-int39}
			\int_{1}^{\lambda_{\ell}^{\frac{1}{\alpha}}t}s^{-2\alpha} ds&=\ln(\lambda_{\ell}^{\frac{1}{\alpha}}t).
		\end{align}	
		Thus the result follows by using the estimates \eqref{N-int37}, \eqref{N-int38}, and \eqref{N-int39} with \eqref{N-int35}.
	\end{proof}
In subsequent sections, some results are required for establishing the pathwise solution of \eqref{pde}. In particular, a formal justification for taking the fractional derivative of stochastic integrals defined by \eqref{Int} is required. Since there is currently no existing result in the literature on this matter, we embark on proving the following theorem.
\begin{theo}
Let $B(t)$ be a real-valued Brownian motion with variance $1$ at $t=1$. Let $\Upsilon(t,s): \RR_{+}\times\RR_{+}\to \RR$ be a deterministic function such that both $\Upsilon(t,s)$ and  ${}_{s} D_{t}^{1-\alpha} \Upsilon(t,s)$, $\alpha\in(0,1)$, are continuous in $t$ and $s$ in some region of the 
	$ts$-plane. Suppose that for $\alpha\in(0,1)$
	the following stochastic integrals are well-defined  
	\[
	\int_{0}^{t}\Upsilon(s,t)dB(s)\ \text{and}\ \int_{0}^{t} \left({}_{s} D_{t}^{1-\alpha} \Upsilon(t,s)\right)dB(s).
	\]
Assume moreover that 
\begin{align}\label{con:Fubini}
    \int_{0}^{t} (t-s)^{\alpha-1}\Bigg(\int_{0}^{s}|\Upsilon(s,r)|^2 dr\Bigg)^{\frac{1}{2}}ds< \infty,
\end{align}
then for all $t>0$ there holds
\begin{align}\label{DK}
D_{t}^{1-\alpha}\Bigg(\int_{0}^{t} \Upsilon(t,r) d B(r)\Bigg)dt \simeq \Bigg( \int_{0}^{t} {}_{r} D_{t}^{1-\alpha} \Upsilon(t,r) d B(r)\Bigg)dt +\big(\lim_{r\to t} {}_{r}D_{t}^{-\alpha}\Upsilon(t,r)\big)dB(t),
\end{align}
where $	{}_{x}D_{t}^{-p} \Upsilon(t,x)=  \frac{1}{\Gamma(p)}\int_{x}^{t} (t-s)^{p-1}\Upsilon(s,x)ds$, $p>0$, is a $p$-fold integral, see~\cite[Section 2.3.2] {Podlubny}.
\end{theo}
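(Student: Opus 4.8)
The plan is to unfold the Riemann--Liouville derivative from its definition \eqref{D:Frac}, interchange a deterministic (Lebesgue) integral with an It\^o integral by a stochastic Fubini argument, and then differentiate the resulting stochastic integral in its upper limit via a Leibniz-type rule for It\^o integrals. Writing $G(t):=\int_0^t \Upsilon(t,r)\,dB(r)$ and specializing \eqref{D:Frac} to the order $1-\alpha$ (so that $\Gamma(1-(1-\alpha))=\Gamma(\alpha)$), we have
\begin{align*}
D_t^{1-\alpha}G(t)=\frac{d}{dt}\left(\frac{1}{\Gamma(\alpha)}\int_0^t (t-s)^{\alpha-1}\left(\int_0^s \Upsilon(s,r)\,dB(r)\right)ds\right).
\end{align*}
All the analytic content lies in the two operations applied to the right-hand side: the interchange of $\int_0^t(t-s)^{\alpha-1}(\cdots)\,ds$ with the inner It\^o integral, and the subsequent $d/dt$.

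First I would justify the interchange. The kernel $(t-s)^{\alpha-1}$ is Lebesgue integrable on $(0,t)$ for $\alpha\in(0,1)$, and condition \eqref{con:Fubini} is precisely the hypothesis needed to invoke the stochastic Fubini theorem (see \cite{DaPrato}), which permits exchanging the $ds$-integral with $dB(r)$. After the exchange, the inner integral runs over $r\le s\le t$, giving
\begin{align*}
\frac{1}{\Gamma(\alpha)}\int_0^t (t-s)^{\alpha-1}G(s)\,ds=\int_0^t\left(\frac{1}{\Gamma(\alpha)}\int_r^t (t-s)^{\alpha-1}\Upsilon(s,r)\,ds\right)dB(r)=\int_0^t {}_rD_t^{-\alpha}\Upsilon(t,r)\,dB(r),
\end{align*}
where the last equality is simply the definition of the fractional integral ${}_rD_t^{-\alpha}$ given in the statement with $p=\alpha$ and $x=r$.

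It remains to differentiate $H(t):=\int_0^t \phi(t,r)\,dB(r)$ in $t$, where $\phi(t,r):={}_rD_t^{-\alpha}\Upsilon(t,r)$ depends on $t$ both through the upper limit and through the integrand. Here I would establish a stochastic Leibniz rule: splitting the increment $H(t+\delta)-H(t)$ into $\int_t^{t+\delta}\phi(t+\delta,r)\,dB(r)$ and $\int_0^t[\phi(t+\delta,r)-\phi(t,r)]\,dB(r)$, the first piece contributes, as $\delta\to 0$, the boundary term $\big(\lim_{r\to t}\phi(t,r)\big)\,dB(t)$, while the second contributes $\big(\int_0^t \partial_t\phi(t,r)\,dB(r)\big)\,dt$. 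Finally, since the Riemann--Liouville derivative is the ordinary derivative of a fractional integral, $\partial_t\,{}_rD_t^{-\alpha}\Upsilon(t,r)={}_rD_t^{1-\alpha}\Upsilon(t,r)$, and $\lim_{r\to t}\phi(t,r)=\lim_{r\to t}{}_rD_t^{-\alpha}\Upsilon(t,r)$, which yields exactly \eqref{DK} in differential form.

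The main obstacle is the stochastic Leibniz step, since there is no fundamental theorem of calculus for It\^o integrals: the increment $\int_t^{t+\delta}\phi(t+\delta,r)\,dB(r)$ is of order $\sqrt{\delta}$ rather than $\delta$, so its limit must emerge as a genuine stochastic differential $\phi(t,t)\,dB(t)$ rather than a Lebesgue term, and the two pieces must be shown to separate cleanly in the appropriate $L_2$ (equality-in-distribution) sense. This is where the continuity of $\Upsilon$ and of ${}_sD_t^{1-\alpha}\Upsilon$ in $(t,s)$, together with the well-definedness of both stochastic integrals, is essential: continuity of $\phi(t,\cdot)$ near the diagonal makes $\lim_{r\to t}\phi(t,r)$ exist and allows replacing $\phi(t+\delta,r)$ by $\phi(t,t)$ on the short interval $[t,t+\delta]$ up to a negligible $L_2$-error, while integrability of $\partial_t\phi$ controls the difference-quotient term. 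The Fubini interchange justified by \eqref{con:Fubini} is comparatively routine once the stochastic Fubini theorem is cited.
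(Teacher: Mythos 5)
Your proposal is correct and follows essentially the same route as the paper: unfold the Riemann--Liouville derivative, apply the stochastic Fubini theorem (justified by \eqref{con:Fubini}) to recognise the fractional integral as $\int_0^t {}_rD_t^{-\alpha}\Upsilon(t,r)\,dB(r)$, and then differentiate this Volterra-type stochastic integral in $t$ via a stochastic Leibniz rule. The only difference is that where you sketch a direct derivation of that Leibniz rule by splitting the increment $H(t+\delta)-H(t)$, the paper simply cites it as Proposition 1.3 of Hess (2023), so your identification of this step as the main technical obstacle is exactly where the paper leans on an external result.
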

\begin{proof}
Using \eqref{D:Frac} and \eqref{con:Fubini} with the stochastic Fubini theorem (see \cite[Section 4.5] {DaPrato}) we get 
	\begin{align*}
		D_{t}^{1-\alpha}\Bigg(\int_{0}^{t} \Upsilon(t,r) d B(r)\Bigg)dt&\simeq\dfrac{1}{\Gamma(\alpha)}d\Bigg(\int_{0}^{t} \Bigg(\dfrac{1}{(t-s)^{1-\alpha}} \int_{0}^{s} \Upsilon(s,r) d B(r)\Bigg)ds\Bigg)\\
		&\simeq d \Bigg(\int_{0}^{t} \Bigg(\dfrac{1}{\Gamma(\alpha)} \int_{r}^{t} \dfrac{\Upsilon(s,r)}{(t-s)^{1-\alpha}}ds\Bigg) d B(r)\Bigg)\\
		&\simeq d \Big(\int_{0}^{t} \widetilde{\Upsilon}(t,r) d B(r)\Big),
	\end{align*}
where $d$ denotes the $t$-differential and
	\[
	\widetilde{\Upsilon}(t,x):= \dfrac{1}{\Gamma(\alpha)}\int_{x}^{t} \dfrac{\Upsilon(s,x)}{(t-s)^{1-\alpha}}ds.
	\]
By Proposition 1.3 in \cite{Hess2023}, we could write
	\begin{align*}
		D_{t}^{1-\alpha}\Bigg(\int_{0}^{t} \Upsilon(t,r) d B(r)\Bigg) dt&\simeq  \Big(\int_{0}^{t} \dfrac{\partial}{\partial t} \widetilde{\Upsilon}(t,r) d B(r)\Big) dt+\lim_{r\to t} \widetilde \Upsilon(t,r)dB(t)\\
		&\simeq 	\int_{0}^{t} {}_{r} D_{t}^{1-\alpha} \Upsilon(t,r) d B(r)dt +\big(\lim_{r\to t} {}_{r}D_{t}^{-\alpha}\Upsilon(t,r)\big)dB(t),
	\end{align*} 
which completes the proof.
\end{proof}
It is worth noting that if we have $\Upsilon(t-r)$ instead of the function $\Upsilon(t,r)$, then \eqref{DK} takes the form 
\begin{align}\label{eq:D}
	D_{t}^{1-\alpha}\Bigg(\int_{0}^{t} \Upsilon(t-r) d B(r)\Bigg)dt\simeq \Bigg(\int_{0}^{t} D_{r}^{1-\alpha} \Upsilon(r) d B(r)\Bigg)dt+ \big(\lim_{r\to 0} D_{r}^{-\alpha}\Upsilon(r)\big)dB(t).
\end{align}

\begin{lem}\label{frD}
Let $t>\tau>0$. For $\ell\geq1$ and $\alpha\in(0,1)$, there holds
    \begin{align}\label{DE}
	D_{t}^{1-\alpha}\int_{0}^{t-\tau} E_{\alpha}( -\lambda_{\ell} (t-\tau-r)^\alpha) d B(r)\simeq \int_{\tau}^{t} D_{r}^{1-\alpha} E_{\alpha}( -\lambda_{\ell} r^\alpha) d B_{\tau}(r),
\end{align}
where $\lambda_\ell$ is given by \eqref{lam} and  $B_\tau$ is a real-valued, time-delayed Brownian motion starting at time $\tau>0$. 
\end{lem}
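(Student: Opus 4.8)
The plan is to reduce \eqref{DE} to the convolution-type differentiation identity \eqref{eq:D}, which was extracted from the preceding theorem, by absorbing the delay $\tau$ through a time shift and then disposing of the resulting boundary term. First I would introduce the shifted time $\sigma:=t-\tau$ and set $\widetilde\Phi(\sigma):=\int_0^\sigma E_\alpha(-\lambda_\ell(\sigma-r)^\alpha)\,dB(r)$, so that the stochastic integral on the left of \eqref{DE} is exactly $\widetilde\Phi(t-\tau)$. Since this process vanishes for $t\le\tau$, the Riemann-Liouville derivative $D_t^{1-\alpha}$, whose convolution kernel in \eqref{D:Frac} starts at $0$, only sees the path on $[\tau,t]$; a change of variable $s\mapsto s-\tau$ in the defining integral \eqref{D:Frac} then shows $D_t^{1-\alpha}\widetilde\Phi(t-\tau)=\bigl(D_\sigma^{1-\alpha}\widetilde\Phi(\sigma)\bigr)\big|_{\sigma=t-\tau}$, i.e. the fractional derivative commutes with the shift. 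This isolates a problem with lower limit $0$ in the variable $\sigma$, to which \eqref{eq:D} applies.

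Next I would apply \eqref{eq:D} with $\Upsilon(u)=E_\alpha(-\lambda_\ell u^\alpha)$, after checking the hypotheses of the theorem for this $\Upsilon$: continuity of $\Upsilon$ and of its fractional derivative follows from the Mittag-Leffler series \eqref{EqML}; the two Wiener integrals are well-defined because $E_\alpha(-\lambda_\ell u^\alpha)\in(0,1]$ by Remark~\ref{rem1}; and the Fubini condition \eqref{con:Fubini} holds since $\bigl(\int_0^s|\Upsilon(s-r)|^2\,dr\bigr)^{1/2}\le\sqrt{s}$ while $\int_0^t(t-s)^{\alpha-1}\sqrt{s}\,ds<\infty$ for $\alpha\in(0,1)$. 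Identity \eqref{eq:D} then yields
\[
D_\sigma^{1-\alpha}\widetilde\Phi(\sigma)\,d\sigma\simeq\Bigl(\int_0^\sigma D_r^{1-\alpha}E_\alpha(-\lambda_\ell r^\alpha)\,dB(r)\Bigr)d\sigma+\bigl(\lim_{r\to0}D_r^{-\alpha}E_\alpha(-\lambda_\ell r^\alpha)\bigr)\,dB(\sigma).
\]

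The crux is to show the boundary term vanishes. Using $D_r^{-\alpha}r^{\alpha k}=\frac{\Gamma(\alpha k+1)}{\Gamma(\alpha k+\alpha+1)}r^{\alpha k+\alpha}$ and integrating the series \eqref{EqML} term by term, I expect the closed form $D_r^{-\alpha}E_\alpha(-\lambda_\ell r^\alpha)=\frac{1}{\lambda_\ell}\bigl(1-E_\alpha(-\lambda_\ell r^\alpha)\bigr)$; here the hypothesis $\ell\ge1$ is essential, since it guarantees $\lambda_\ell=\ell(\ell+1)>0$. Because $E_\alpha(0)=1$, this tends to $0$ as $r\to0$, so the $dB(\sigma)$ term disappears and we are left with $D_\sigma^{1-\alpha}\widetilde\Phi(\sigma)\simeq\int_0^\sigma D_r^{1-\alpha}E_\alpha(-\lambda_\ell r^\alpha)\,dB(r)$.

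Finally I would undo the shift: with $\sigma=t-\tau$ and the substitution $r\mapsto r-\tau$ in the Wiener integral, and using that the time-delayed motion satisfies $B_\tau(\cdot)\simeq B(\cdot-\tau)$ in the sense of \eqref{eq:Btau}--\eqref{eq:Btau2}, the integral over $[0,t-\tau]$ against $dB$ becomes one over $[\tau,t]$ against $dB_\tau$, with the Mittag-Leffler and fractional-derivative argument measured from the delay $\tau$, which is the right-hand side of \eqref{DE}. I expect the main obstacle to be the boundary-term computation together with careful bookkeeping of the shift: one must ensure that the lower limit of the Riemann-Liouville kernel, the argument of $E_\alpha$, and the starting time of $B_\tau$ are displaced consistently, so that both sides carry the variance $\int_0^{t-\tau}\bigl(D_r^{1-\alpha}E_\alpha(-\lambda_\ell r^\alpha)\bigr)^2\,dr$ and the identity \eqref{eq:D} is invoked only under its verified hypotheses.
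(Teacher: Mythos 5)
Your proposal follows essentially the same route as the paper's proof: both apply the convolution form \eqref{eq:D} of the preceding theorem with $\Upsilon(u)=E_{\alpha}(-\lambda_{\ell}u^{\alpha})$, verify the Fubini condition \eqref{con:Fubini} by the same bound $\big(\int_{0}^{s}|\Upsilon|^{2}\big)^{1/2}\le\sqrt{s}$, and then show that the boundary term vanishes. Your closed form $D_{r}^{-\alpha}E_{\alpha}(-\lambda_{\ell}r^{\alpha})=\lambda_{\ell}^{-1}\big(1-E_{\alpha}(-\lambda_{\ell}r^{\alpha})\big)$ is equivalent to the paper's expression $r^{\alpha}E_{\alpha,1+\alpha}(-\lambda_{\ell}r^{\alpha})$ (via the identity $zE_{\alpha,\alpha+1}(z)=E_{\alpha}(z)-1$), so the boundary-term computation is fine, and your explicit handling of the shift $\sigma=t-\tau$ is if anything more careful than the paper, which passes from the integral over $[0,t-\tau]$ against $dB$ to the integral over $[\tau,t]$ against $dB_{\tau}$ without comment.

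The one genuine gap is your justification that the second Wiener integral --- the one whose integrand is the fractional derivative of $\Upsilon$ --- is well defined. Boundedness $E_{\alpha}(-\lambda_{\ell}u^{\alpha})\in(0,1]$ is not the relevant fact there: one needs square-integrability of $D_{r}^{1-\alpha}E_{\alpha}(-\lambda_{\ell}r^{\alpha})=r^{\alpha-1}E_{\alpha,\alpha}(-\lambda_{\ell}r^{\alpha})\sim r^{\alpha-1}/\Gamma(\alpha)$ as $r\to0$, and its square is not integrable at the origin when $\alpha\le\tfrac{1}{2}$. In your shifted formulation the resulting object $\int_{0}^{\sigma}D_{r}^{1-\alpha}E_{\alpha}(-\lambda_{\ell}r^{\alpha})\,dB(r)$ --- and the variance $\int_{0}^{t-\tau}\big(D_{r}^{1-\alpha}E_{\alpha}(-\lambda_{\ell}r^{\alpha})\big)^{2}dr$ that you yourself say both sides must carry --- is therefore infinite for $\alpha\in(0,\tfrac{1}{2}]$. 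The paper's proof sidesteps this by writing the post-derivative integral over $[\tau,t]$ with argument $r$ bounded below by $\tau>0$ and checking, via It\^o's isometry and \eqref{EMitagg}, that $\int_{\tau}^{t}|D_{r}^{1-\alpha}E_{\alpha}(-\lambda_{\ell}r^{\alpha})|^{2}dr\le C\lambda_{\ell}^{-2}\tau^{-1}<\infty$; it is the delay $\tau$ that rescues square-integrability. This is exactly the ``consistent displacement'' issue you flag at the end of your proposal but do not resolve, and as written your verification of the hypotheses of the theorem fails for half of the admissible range of $\alpha$.
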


\begin{proof}
By Remark \eqref{rem1},
\begin{align*}
    \int_{\tau}^{t} (t-s)^{\alpha-1}\Bigg(\int_{0}^{s}|E_{\alpha}(-\lambda_{\ell}r^\alpha)|^2 dr\Bigg)^{\frac{1}{2}}ds&\le \sqrt{t}\int_{\tau}^{t}(t-s)^{\alpha-1}ds\\
    &\le \sqrt{t}(t-\tau)^{\alpha}/\alpha\\
    &< \infty,
\end{align*}
and by Proposition \ref{PropVar}, the stochastic integral
\[
\int_{0}^{t-\tau} E_{\alpha}( -\lambda_{\ell} (t-\tau-r)^\alpha) d B(r)
\]
is well-defined.

Let us show that the stochastic integral 
\[
\int_{\tau}^{t} D_{r}^{1-\alpha} E_{\alpha}( -\lambda_{\ell} r^\alpha) d B_{\tau}(r)
\]
is well-defined. 
 Using It\^o's isometry and Lemma 3.10 in \cite{Podlubny} we obtain 
\begin{align*}
\bE\Bigg[\Bigg\vert\int_{\tau}^{t} D_{r}^{1-\alpha} E_{\alpha}( -\lambda_{\ell} r^\alpha) d B_{\tau}(r)\Bigg\vert^2\Bigg]&= \int_{\tau}^{t} \big| D_{r}^{1-\alpha} E_{\alpha}( -\lambda_{\ell} r^\alpha) \big|^2 dr\\
&= \dfrac{1}{\lambda_{\ell}^2} \int_{\tau}^t \Bigg( \dfrac{-\lambda_{\ell}}{\Gamma(\alpha)}\dfrac{d}{dr}	\int_{0}^r \dfrac{E_{\alpha}( -\lambda_{\ell} s^\alpha)}{(r-s)^{1-\alpha}} \Bigg)^2 dr\\
&=\dfrac{1}{\lambda_{\ell}^2}\int_{\tau}^t \Big(\dfrac{d}{dr}(E_{\alpha}( -\lambda_{\ell} r^\alpha)-1)\Big)^2 dr\\
&\leq C\lambda_{\ell}^{-2}\tau^{-1}<\infty,
\end{align*}
where the last second step used the relations (4.3.1) and (5.1.14) in \cite{Gorenflo2014} and the upper bound \eqref{EMitagg}. 

By the relation \eqref{eq:D} we obtain
\begin{align}\label{FracUellm}
	D_{t}^{1-\alpha}\Bigg(\int_{0}^{t-\tau} E_{\alpha}( -\lambda_{\ell} (t-\tau-r)^\alpha) d B(r)\Bigg)dt\simeq& \Bigg(\int_{\tau}^{t} D_{r}^{1-\alpha} E_{\alpha}( -\lambda_{\ell} r^\alpha) d B_{\tau}(r)\Bigg)dt\nonumber\\
 &+ \lim_{r\to 0} \big(D_{r}^{-\alpha}E_{\alpha}( -\lambda_{\ell} r^\alpha)\big)dB_{\tau}(t).
\end{align}

Using equations (2.88) and (1.100) in \cite{Podlubny}, 
\begin{align*}
	\lim_{r\to 0} D_{r}^{-\alpha}E_{\alpha}( -\lambda_{\ell} r^\alpha)&= \lim_{r\to 0}\dfrac{1}{\Gamma(\alpha)} \int_{0}^{r} (r-s)^{\alpha-1} E_{\alpha}( -\lambda_{\ell} s^\alpha)ds\\
	&= \lim_{r\to 0} r^{\alpha} E_{\alpha,1+\alpha}( -\lambda_{\ell} r^\alpha)=0.
\end{align*}

Thus, equation \eqref{FracUellm} becomes
\begin{align*}
	D_{t}^{1-\alpha}\int_{0}^{t-\tau} E_{\alpha}( -\lambda_{\ell} (t-\tau-r)^\alpha) d B(r)\simeq \int_{\tau}^{t} D_{r}^{1-\alpha} E_{\alpha}( -\lambda_{\ell} r^\alpha) d B_{\tau}(r),
\end{align*}
which completes the proof.
\end{proof}

\section{Solution of the homogeneous problem}\label{Sec4}
Consider a random field $U^{H}(t)\in L_2(\Omega\times\bS^2),\ t\in (0,\infty)$. We say that the field $U^H$ satisfies the equation \eqref{to}, in the $L_2(\Omega\times\bS^2)$ sense, if for a given $t>0$, there holds
\[
	\sup_{t > 0} \left\| \frac{d}{dt} U^H(t) - D^{1-\alpha}_t 
	\Delta_{\bS^2} U^H(t) \right\|^2_{L_2(\Omega\times\bS^2)} = 0.
\]	
This section derives the solution  $U^{H}(t)$, $t\in (0,\infty)$, to the homogeneous equation \eqref{to} under the random initial condition $U^{H}(0)=\xi$, where $\xi$ is a centered, strongly isotropic Gaussian random field on $\bS^2$.
	
Note that since $U^{H}\in L_2(\Omega\times\bS^2)$, it follows that the field $U^{H}$ takes the series expansion 

\begin{align*}
		U^{H}(t) = \sum_{\ell=0}^\infty \sum_{m=-\ell}^{\ell} \widehat{U^{H}}_{\ell,m}(t) Y_{\ell,m}, \quad t\in(0,\infty),
\end{align*}
where the random coefficients $\widehat{U^{H}}_{\ell,m}$, $\ell\in\N_{0}$, $m=-\ell,\dots,\ell$, are given by
\begin{align*}
	\widehat{U^{H}}_{\ell,m}(t)=\int_{\bS^2}U^{H}(\bsx,t)\overline{Y_{\ell,m}(\bsx)}\mu(d\bsx),\quad t\in(0,\infty).
	\end{align*}
By multiplying both sides of \eqref{to} by $\overline{Y_{\ell,m}}$ and integrating over $\bS^2$, we obtain, with the help of \eqref{beltrami}, the following set of ordinary differential equations 
	\begin{equation}\label{ord}
		d \widehat{U^{H}}_{\ell,m} + \lambda_{\ell} D_t^{1-\alpha} \widehat{U^{H}}_{\ell,m} dt = 0, \quad \widehat{U^{H}}_{\ell,m}(0) = \widehat{\xi}_{\ell,m}.
	\end{equation}
We denote by $\widetilde{U^{H}}_{\ell,m}(z)$ the Laplace transform of $\widehat{U^H}_{\ell,m}$ with respect to $t$, i.e.,
\begin{align}\label{Laplace}
		\widetilde{U^H}_{\ell,m}(z)=\calL(\widehat{U^H}_{\ell,m}(t)):=\int_{0}^{\infty}e^{-tz}\widehat{U^H}_{\ell,m}(t)dt,
\end{align}
assuming that the integral converges. The condition for its
convergence will be given in Proposition~\ref{Theo1}.
Since 
	$\calL(D_{t}^{1-\alpha}\widehat{U^H}_{\ell,m}(t))=z^{1-\alpha}\widetilde{U^H}_{\ell,m}(z)$ and $\calL(\frac{d}{dt}\widehat{U^H}_{\ell,m}(t))=z\widetilde{U^H}_{\ell,m}(z)-\widehat{U^H}_{\ell,m}(0)$, then by taking the Laplace transform of both sides of \eqref{ord}, we get
	\begin{equation*}
		z \widetilde{U^H}_{\ell,m}(z) - \widehat{\xi}_{\ell,m} + \lambda_{\ell} z^{1-\alpha} \widetilde{U^H}_{\ell,m} (z) = 0.
	\end{equation*}
	Now solving for $\widetilde{U^H}_{\ell,m} (z)$, we arrive at 	\begin{equation}\label{Lap}
		\widetilde{U^H}_{\ell,m} (z) = \frac{ \widehat{\xi}_{\ell,m}}{ z+ \lambda_{\ell} z^{1-\alpha}}. 
	\end{equation}
	Recall that (see \cite{Podlubny}, equation 1.80)
	\[
	\calL\{ E_\alpha(-\lambda_{\ell} t^\alpha)\} = \frac{1} { z+ \lambda_{\ell} z^{1-\alpha}},
	\]
	where $E_\alpha(\cdot)$ is the Mittag-Leffler function (see equation \eqref{EqML}).
	
	\indent	
	By taking the inverse Laplace transform in \eqref{Lap}, the solution of \eqref{to} becomes
 \begin{align}\label{homo}
     U^{H}(t) = \sum_{\ell=0}^\infty \sum_{m=-\ell}^{\ell} E_\alpha(-\lambda_{\ell} t^\alpha) \widehat{\xi}_{\ell,m} Y_{\ell,m}, \quad t\in[0,\infty).
 \end{align}	

In order to prove that $U^H$ is the solution of \eqref{to} we need to prove a truncation field $U_L^H$ of $U^H$ is a solution and then pass it to the limit as the truncation degree $L \to \infty$. Also, we need the uniform convergence of $\frac{d}{dt} U^H_L$ to $\frac{d}{dt}U^H$ in some sense. We begin with the following lemma.

Let $V_{L}^{H}(t)$ be defined as
\begin{align}\label{VL}
 V_{L}^{H}(t):= \sum_{\ell=1}^{L}\sum_{m=-\ell}^\ell
			(-\lambda_{\ell}) t^{\alpha-1}E_{\alpha,\alpha}(-\lambda_{\ell} t^{\alpha})\widehat{\xi}_{\ell,m} Y_{\ell,m}= 
   \frac{d}{dt} U_L^H(t),  
\end{align}
where $\widehat{\xi}_{\ell,m}$ are the Fourier coefficients of $\xi$. 

\begin{lem}\label{unif-sure}
Let $\xi$, be a centered, $2$-weakly isotropic Gaussian random field on $\bS^2$. Let $\{\calC_{\ell}:\ell\in\N_{0}\}$, the angular power spectrum of $\xi$, satisfy \eqref{eq:condCell}. 
Let $t_0>0$ be given. For $t\in[t_0,\infty)$, then $V_{L}^{H}$ is convergent to
\begin{equation}\label{eq:defVH}
V^{H}(t):= \sum_{\ell=1}^{\infty}\sum_{m=-\ell}^\ell
			(-\lambda_{\ell}) t^{\alpha-1}E_{\alpha,\alpha}(-\lambda_{\ell} t^{\alpha})\widehat{\xi}_{\ell,m} Y_{\ell,m}
\end{equation}
in the following sense
\[
\sup_{t \geq t_0} \bE\Big[ \left\| V_L^H(t) - V^H(t) \right\|^2_{L_2(\bS^2)} \Big]
\to 0,\quad  \text{ as } L \to \infty.
\]
\end{lem}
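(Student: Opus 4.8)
The plan is to reduce the uniform-in-$t$ statement to the tail of the convergent spectral series $\sum_{\ell}(2\ell+1)\calC_{\ell}$, by combining Parseval's theorem with a single $\ell$- and $t$-uniform bound on the Mittag-Leffler factor.

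First I would recall that $\bE[\|\cdot\|_{L_2(\bS^2)}^2]=\|\cdot\|_{L_2(\Omega\times\bS^2)}^2$ by Fubini, and write the difference as the spectral tail
\[
V_L^H(t)-V^H(t)=-\sum_{\ell=L+1}^{\infty}\sum_{m=-\ell}^{\ell}\lambda_{\ell}\,t^{\alpha-1}E_{\alpha,\alpha}(-\lambda_{\ell}t^{\alpha})\,\widehat{\xi}_{\ell,m}Y_{\ell,m}.
\]
Since $\{Y_{\ell,m}\}$ is orthonormal in $L_2(\bS^2)$, Parseval's theorem together with Tonelli (all terms are nonnegative) gives
\[
\bE\big[\|V_L^H(t)-V^H(t)\|_{L_2(\bS^2)}^2\big]
=\sum_{\ell=L+1}^{\infty}\big(\lambda_{\ell}t^{\alpha-1}E_{\alpha,\alpha}(-\lambda_{\ell}t^{\alpha})\big)^{2}\sum_{m=-\ell}^{\ell}\bE\big[|\widehat{\xi}_{\ell,m}|^{2}\big],
\]
and by \eqref{eq:defCell} the inner sum equals $(2\ell+1)\calC_{\ell}$.

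The key step is a uniform bound on $\lambda_{\ell}t^{\alpha-1}E_{\alpha,\alpha}(-\lambda_{\ell}t^{\alpha})$ for $t\geq t_0$. Applying \eqref{EMitagg} with $\beta=\alpha$ and $z=\lambda_{\ell}t^{\alpha}\geq0$ yields $E_{\alpha,\alpha}(-\lambda_{\ell}t^{\alpha})\leq C/(1+\lambda_{\ell}t^{\alpha})$; writing $\lambda_{\ell}t^{\alpha-1}=(\lambda_{\ell}t^{\alpha})/t$ and using $x/(1+x)\leq1$ for $x\geq0$ gives
\[
\lambda_{\ell}t^{\alpha-1}E_{\alpha,\alpha}(-\lambda_{\ell}t^{\alpha})
\leq\frac{C}{t}\cdot\frac{\lambda_{\ell}t^{\alpha}}{1+\lambda_{\ell}t^{\alpha}}
\leq\frac{C}{t}\leq\frac{C}{t_0},\qquad t\geq t_0,
\]
a bound independent of both $\ell$ and $t$.

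Combining the two displays, for every $t\geq t_0$ one obtains
\[
\bE\big[\|V_L^H(t)-V^H(t)\|_{L_2(\bS^2)}^2\big]\leq\frac{C^{2}}{t_0^{2}}\sum_{\ell=L+1}^{\infty}(2\ell+1)\calC_{\ell},
\]
and since the right-hand side does not depend on $t$ it also bounds the supremum over $t\geq t_0$. By \eqref{eq:condCell} the series $\sum_{\ell}(2\ell+1)\calC_{\ell}$ converges, so its tail tends to $0$ as $L\to\infty$, which gives the claim; the same estimate with $L=0$ shows that $V^H(t)$ is well-defined in $L_2(\Omega\times\bS^2)$ uniformly in $t\geq t_0$. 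The only genuinely delicate point is the $\ell$- and $t$-uniform control of the Mittag-Leffler factor, and this is precisely where the restriction to $t\geq t_0>0$ enters (the factor $t^{\alpha-1}$ would blow up as $t\to0^{+}$ when $\alpha<1$); everything else is Parseval and convergence of the spectral tail.
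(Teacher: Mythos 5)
Your proposal is correct and follows essentially the same route as the paper: Parseval's identity reduces the error to the spectral tail $\sum_{\ell>L}(2\ell+1)\calC_{\ell}$ weighted by $\big(\lambda_{\ell}t^{\alpha-1}E_{\alpha,\alpha}(-\lambda_{\ell}t^{\alpha})\big)^2$, and the bound \eqref{EMitagg} gives the $t$- and $\ell$-uniform control $C^2t_0^{-2}$, exactly as in the paper's estimate \eqref{eq:cauchyCond}. The only cosmetic difference is that the paper phrases the argument as a Cauchy-sequence criterion for the partial sums before identifying the limit with \eqref{eq:defVH}, whereas you bound the tail directly and verify well-definedness of $V^H$ afterwards; the underlying estimate is identical.
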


\begin{proof}
For $t\geq t_0$, $L<M$, by Parseval's formula, we have 
\begin{align}
  \bE \Big[\|V^H_L(t) - V^H_M(t) \|^2_{L_2(\bS^2)}\Big]
  &= \bE \left[\sum_{\ell=L+1}^M
  \sum_{m=-\ell}^{\ell}
  \lambda^2_{\ell} t^{2\alpha-2}
  (E_{\alpha,\alpha}(-\lambda_{\ell} t^\alpha))^2
  |\widehat{\xi}_{\ell,m}|^2  \right] \nonumber\\
  &= \sum_{\ell=L+1}^M
  \sum_{m=-\ell}^{\ell}
  \lambda^2_{\ell} t^{2\alpha-2}
  (E_{\alpha,\alpha}(-\lambda_{\ell} t^\alpha))^2
  \bE\big[|\widehat{\xi}_{\ell,m}|^2\big] \nonumber\\
  &=\sum_{\ell=L+1}^M
\lambda^2_{\ell} t^{2\alpha-2}
  (E_{\alpha,\alpha}(-\lambda_{\ell} t^\alpha))^2
  (2\ell+1) \calC_{\ell} 
\text{   by using \eqref{eq:defCell} }\nonumber \\
&\le C^2 t^{-2}
\sum_{\ell=L+1}^M (2\ell+1) \calC_{\ell} 
\text{   by using \eqref{EMitagg} } \nonumber\\
&\le C^2 t_0^{-2}
\sum_{\ell=L+1}^M (2\ell+1) \calC_{\ell} \label{eq:cauchyCond}.
\end{align}
For a given $\epsilon>0$, using the condition \eqref{eq:condCell}, there is 
$L_0$ independent of $t$ 
such that for $L,M  \ge L_0$, 
the RHS of \eqref{eq:cauchyCond} is smaller than $\epsilon$. So
$\{V_L^H\}$ is a Cauchy sequence in $L_2(\Omega \times \bS^2)$ and hence it
is convergent. We define the limit of $V_L^H$ as $L \to \infty$ as in
\eqref{eq:defVH}. 
Note that by the condition \eqref{eq:condCell} we obtain
\[
\bE \Big[\|V^H(t)\|^2_{L_2(\bS^2)}\Big]\le C^2 t_0^{-2}
\sum_{\ell=1}^{\infty} (2\ell+1) \calC_{\ell}<\infty,
\]
which guarantees that $V^H$ is well-defined in the $L_2(\Omega\times\bS^2)$ sense.
\end{proof}

Now we prove the following lemma in which we adapt the techniques from the proofs of \cite[Theorems 7.11 and 7.17]{Rudin1976}.
\begin{lem}\label{dUH}
Let $T> t_0>0$ be fixed. 
For $\widehat{t} \in [t_0, T]$, we have
\[
\lim_{L \to \infty} 
\bE \Bigg[\left\| \frac{dU^H}{dt}(\widehat{t}) - V^H_L(\widehat{t}) \right\|^2_{L_2(\bS^2)}\Bigg] = 0.
\]
\end{lem}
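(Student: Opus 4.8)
The plan is to reduce the statement to the single identity $\frac{dU^H}{dt}(\widehat{t})=V^H(\widehat{t})$ in $L_2(\Omega\times\bS^2)$, after which the conclusion is immediate: by Lemma~\ref{unif-sure},
\[
\bE\Big[\big\|\tfrac{dU^H}{dt}(\widehat{t}) - V^H_L(\widehat{t})\big\|^2_{L_2(\bS^2)}\Big] = \bE\big[\|V^H(\widehat{t}) - V^H_L(\widehat{t})\|^2_{L_2(\bS^2)}\big] \le \sup_{t\ge t_0}\bE\big[\|V^H(t) - V^H_L(t)\|^2_{L_2(\bS^2)}\big] \to 0 .
\]
Thus the heart of the matter is a term-by-term differentiation result for the $L_2(\Omega\times\bS^2)$-valued curve $t\mapsto U^H(t)$ given by \eqref{homo}, which I would establish by adapting the argument of Rudin's Theorem~7.17 to this Hilbert space. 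As a preliminary I record that $U^H_L(t)\to U^H(t)$ in $L_2(\Omega\times\bS^2)$ uniformly in $t$, since $E_\alpha(-\lambda_\ell t^\alpha)\in(0,1]$ gives
\[
\bE\big[\|U^H(t)-U^H_L(t)\|^2_{L_2(\bS^2)}\big] = \sum_{\ell>L}(E_\alpha(-\lambda_\ell t^\alpha))^2(2\ell+1)\calC_\ell \le \sum_{\ell>L}(2\ell+1)\calC_\ell ,
\]
which is the tail of the summable series in \eqref{eq:condCell}.

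To show differentiability with derivative $V^H$, I would fix $\widehat{t}\in[t_0,T]$ and take $h$ small enough that $\widehat{t}+h\in[t_0,T]$, and split the difference quotient as
\begin{align*}
\frac{U^H(\widehat{t}+h)-U^H(\widehat{t})}{h} - V^H(\widehat{t})
&= \Big[\tfrac{(U^H-U^H_L)(\widehat{t}+h)-(U^H-U^H_L)(\widehat{t})}{h}\Big] \\
&\quad + \Big[\tfrac{U^H_L(\widehat{t}+h)-U^H_L(\widehat{t})}{h} - V^H_L(\widehat{t})\Big] + \big[V^H_L(\widehat{t})-V^H(\widehat{t})\big] .
\end{align*}
The third bracket is made small, uniformly in $h$, by Lemma~\ref{unif-sure}. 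For fixed $L$ the second bracket tends to $0$ as $h\to0$, because $U^H_L$ is a finite sum and $\frac{d}{dt}U^H_L=V^H_L$ by \eqref{VL}. The first bracket is the crux: writing $g_\ell(t):=E_\alpha(-\lambda_\ell t^\alpha)$, whose derivative is $g_\ell'(t)=(-\lambda_\ell)t^{\alpha-1}E_{\alpha,\alpha}(-\lambda_\ell t^\alpha)$, the mean value theorem gives $\tfrac{g_\ell(\widehat{t}+h)-g_\ell(\widehat{t})}{h}=g_\ell'(\theta_{\ell,h})$ for some $\theta_{\ell,h}$ between $\widehat{t}$ and $\widehat{t}+h$, hence $\theta_{\ell,h}\ge t_0$. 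By Parseval's identity and the covariance structure \eqref{eq:defCell},
\[
\bE\Big[\Big\|\tfrac{(U^H-U^H_L)(\widehat{t}+h)-(U^H-U^H_L)(\widehat{t})}{h}\Big\|^2_{L_2(\bS^2)}\Big] = \sum_{\ell>L}(2\ell+1)\calC_\ell\,|g_\ell'(\theta_{\ell,h})|^2 ,
\]
and the bound \eqref{EMitagg} on $E_{\alpha,\alpha}$ gives $|g_\ell'(\theta_{\ell,h})|^2=\lambda_\ell^2\theta_{\ell,h}^{2\alpha-2}(E_{\alpha,\alpha}(-\lambda_\ell\theta_{\ell,h}^\alpha))^2\le C^2\theta_{\ell,h}^{-2}\le C^2 t_0^{-2}$, exactly as in the estimate \eqref{eq:cauchyCond}. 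Hence the first bracket has $L_2(\Omega\times\bS^2)$-norm at most $C t_0^{-1}\big(\sum_{\ell>L}(2\ell+1)\calC_\ell\big)^{1/2}$, uniformly in $h$.

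The argument is then completed in the usual order of quantifiers: given $\epsilon>0$, first choose $L$ so large that the first and third brackets are each below $\epsilon/3$ (the first uniformly in $h$ via the tail of \eqref{eq:condCell}, the third via Lemma~\ref{unif-sure}), and then, for this fixed $L$, choose $\delta>0$ so that the second bracket is below $\epsilon/3$ for $0<|h|<\delta$. This yields $\frac{dU^H}{dt}(\widehat{t})=V^H(\widehat{t})$ and hence the lemma. The main obstacle is precisely the first bracket: one must control the difference quotient of the \emph{infinite tail} $U^H-U^H_L$ uniformly in $h$, which is feasible only because the mean value theorem keeps the intermediate points $\theta_{\ell,h}$ bounded away from $0$ (so the singular factor $t^{\alpha-1}$ stays bounded by $t_0^{\alpha-1}$) and because \eqref{EMitagg} converts $\lambda_\ell^2(E_{\alpha,\alpha})^2$ into a $\theta^{-2}$ factor that is independent of $\ell$, reducing the whole estimate to the summable series $\sum(2\ell+1)\calC_\ell$.
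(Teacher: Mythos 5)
Your proof is correct and follows essentially the same route as the paper, which also adapts Rudin's Theorem 7.17: both arguments rest on the uniform (in $t\ge t_0$) control of the tail difference quotients obtained from the mean value theorem applied to $E_\alpha(-\lambda_\ell t^\alpha)$ together with the bound \eqref{EMitagg}, i.e.\ the same estimate underlying Lemma~\ref{unif-sure}. The only difference is organizational — you split the difference quotient into three terms and bound the infinite tail directly, while the paper shows the difference quotients $\phi_L$ form a uniform Cauchy sequence and interchanges the limits $L\to\infty$ and $t\to\widehat{t}$ — and your version arguably spells out more explicitly the step the paper compresses into its bound on $U^H_L-U^H_M$.
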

\begin{proof}
Let us define for $t\neq \widehat{t}$
\[
\phi(t) := 
\frac{U^H(t)-U^H(\widehat{t})}
{t-\widehat{t}}
\]
and
\[
\phi_L(t) := 
\frac{U^H_L(t)-U^H_L(\widehat{t})}
{t-\widehat{t}}.
\]
Then we have 
\begin{equation}\label{eq:limphi}
\lim_{t \to \widehat{t}} \phi(t) = 
\frac{dU^H}{dt}(\wt) = V(\wt).
\end{equation}
We also have 
\begin{align}
\lim_{t \to \widehat{t}} \phi_L(t) &= 
\sum_{\ell=0}^L
\lim_{t \to \wt}
\frac{E_{\alpha}(-\lambda_{\ell} t^\alpha) - E_{\alpha}(-\lambda_{\ell} \wt^\alpha)}{t-\wt} 
\sum_{m=-\ell}^{\ell}
\widehat{\xi}_{\ell,m} Y_{\ell,m} \nonumber\\
&=\sum_{\ell=0}^L
\frac{d}{d\hat{t}} E_{\alpha}(-\lambda_{\ell}\hat{t}^\alpha)
\sum_{m=-\ell}^{\ell}
\widehat{\xi}_{\ell,m} Y_{\ell,m}\nonumber\\
&=\sum_{\ell=1}^L 
(-\lambda_{\ell}) \hat{t}^{\alpha-1} E_{\alpha,\alpha}(-\lambda_{\ell} \hat{t}^\alpha)\sum_{m=-\ell}^{\ell}
\widehat{\xi}_{\ell,m} Y_{\ell,m}
=V^H_L(\widehat{t}).    \label{eq:limphiL}
\end{align}
We remark, for a fixed $\omega\in\Omega$, that $V_L^H(\wt)<\infty$ for $\wt\geq t_0>0$.

Since by Lemma \ref{unif-sure},  $V^H_L(t)$ and $V^H_L(\wt)$ converge uniformly to $V^H(t)$ and $V^H(\wt)$ respectively in $L_2(\Omega \times \bS^2)$, for any given $\epsilon>0$, we can choose $L_0$ independent of 
$t$ such that for 
all $L,M \ge L_0$, 
\begin{equation}
    \bE\big[\| U^H_L(\wt) - U^H_M(\wt)-U^H_L(t)+U^H_M(t)\|^2_{L_2(\bS^2)}\big] 
    \le \frac{|\wt-t|^2}{(T-t_0)^2} \epsilon.
\end{equation}
Therefore, 
\begin{equation}
\bE \big[\| \phi_L(t) - \phi_M(t)\|^2_{L_2(\bS^2)}\big] \le \epsilon.
\end{equation}
This shows that $\{\phi_L\}$ is a Cauchy sequence in
$L_2(\Omega \times \bS^2)$ and hence for 
$t \in [t_0,T]$ and $t \ne \wt$,
\begin{equation}\label{eq:phiL}
\bE \big[\| \phi(t) - \phi_L(t)\|^2_{L_2(\bS^2)}\big] \le \epsilon.
\end{equation}
Now, by taking the limit as $t \to \wt$ in \eqref{eq:phiL},
with the help of \eqref{eq:limphi} and \eqref{eq:limphiL} we obtain
\[
\bE \Bigg[\left\| \frac{dU^H}{dt}(\wt) - V^H_L(\wt) \right\|^2_{L_2(\bS^2)}\Bigg]
\le \epsilon,
\]
which completes the proof.
\end{proof}

Let $G_L$ be defined as 
\begin{align}\label{GL}
    G_L(t) := \sum_{\ell=1}^L (-\lambda_{\ell}) E_{\alpha}(-\lambda_{\ell} t^\alpha)
\sum_{m=-\ell}^\ell \widehat{\xi}_{\ell,m} Y_{\ell,m}.
\end{align}

Using \eqref{GL} with the Parseval formula, \eqref{eq:defCell} and the upper bound \eqref{N-Simon} then for $L<M$ we have
\begin{align}\label{eq:cauchyCond2}
   \bE\|G_L(t) - G_M(t)\|^2_{L_2(\bS^2)}&=\sum_{\ell=L+1}^M (\lambda_{\ell})^2
   (E_{\alpha}(-\lambda_{\ell} t^\alpha))^2 (2\ell+1 ) \calC_{\ell}\notag\\
   &\leq \big(\Gamma(1+\alpha)t^{-\alpha}\big)^2\sum_{\ell=L+1}^M (2\ell+1 ) \calC_{\ell} \nonumber\\
   &\leq \big(\Gamma(1+\alpha)t_0^{-\alpha}\big)^2\sum_{\ell=L+1}^M (2\ell+1 ) \calC_{\ell}.
\end{align}
For a given $\epsilon>0$, using the condition \eqref{eq:condCell}, there is 
$L_0$ independent of $t$ 
such that for $L,M  \ge L_0$, the RHS of \eqref{eq:cauchyCond2} can be smaller than $\epsilon$. We then let
\begin{align}\label{G}
    G(t) := \sum_{\ell=1}^\infty (-\lambda_{\ell}) E_{\alpha}(-\lambda_{\ell} t^\alpha)
\sum_{m=-\ell}^\ell \widehat{\xi}_{\ell,m} Y_{\ell,m},
\end{align}
which is convergent in $L_2(\Omega\times\bS^2)$.

\begin{lem}\label{lem:Dalpha}
Let $\xi$ be a centered, $2$-weakly isotropic Gaussian random field on $\bS^2$. Let $\{\calC_{\ell}:\ell\in\N_{0}\}$, the angular power spectrum of $\xi$, satisfy \eqref{eq:condCell}. Let $t_0>0$ be given. For
$t\geq t_0$ there holds
\[\lim_{L \to \infty} \bE\big[ \| D_{t}^{1-\alpha} G_L(t) - D_{t}^{1-\alpha} G(t)\|^2_{L_2(\bS^2)}\big] = 0,
\]
where $G_L$ and $G$ are given by \eqref{GL} and \eqref{G} respectively.
\end{lem}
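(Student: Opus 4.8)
The plan is to reduce the statement to Lemma~\ref{unif-sure} by showing that the Riemann--Liouville derivative acts termwise on each of the two series, producing exactly the fields $V_L^H$ and $V^H$ of \eqref{VL} and \eqref{eq:defVH}. First I would record the fractional-derivative identity for the Mittag-Leffler function,
\[
D_t^{1-\alpha}E_\alpha(-\lambda_\ell t^\alpha)=t^{\alpha-1}E_{\alpha,\alpha}(-\lambda_\ell t^\alpha),\quad \ell\geq1,\ t>0,
\]
which follows from the relations (4.3.1) and (5.1.14) in \cite{Gorenflo2014} (equivalently, from $\frac{d}{dt}E_\alpha(-\lambda_\ell t^\alpha)=-\lambda_\ell t^{\alpha-1}E_{\alpha,\alpha}(-\lambda_\ell t^\alpha)$ together with the ODE relation behind \eqref{ord}), and is the same computation already carried out inside the proof of Lemma~\ref{frD}. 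Since $G_L$ in \eqref{GL} is a finite sum and $D_t^{1-\alpha}$ is linear, applying this identity termwise gives at once $D_t^{1-\alpha}G_L(t)=V_L^H(t)$, with $V_L^H$ as in \eqref{VL}.

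The substantive step is to establish the analogous identity for the full series, $D_t^{1-\alpha}G(t)=V^H(t)$ with $V^H$ given by \eqref{eq:defVH}. Writing $D_t^{1-\alpha}=\frac{d}{dt}I_t^{\alpha}$, where $I_t^\alpha f(t):=\frac{1}{\Gamma(\alpha)}\int_0^t(t-s)^{\alpha-1}f(s)\,ds$ as in \eqref{D:Frac}, I would first show that $I_t^\alpha G_L(t)\to I_t^\alpha G(t)$ in $L_2(\Omega\times\bS^2)$ for each fixed $t\geq t_0$. By Minkowski's integral inequality,
\[
\big\|I_t^\alpha G_L(t)-I_t^\alpha G(t)\big\|_{L_2(\Omega\times\bS^2)}
\leq\frac{1}{\Gamma(\alpha)}\int_0^t(t-s)^{\alpha-1}\big\|G_L(s)-G(s)\big\|_{L_2(\Omega\times\bS^2)}\,ds,
\]
and the upper bound \eqref{N-Simon} yields $\|G_L(s)-G(s)\|_{L_2(\Omega\times\bS^2)}\leq\Gamma(1+\alpha)\,s^{-\alpha}\big(\sum_{\ell>L}(2\ell+1)\calC_\ell\big)^{1/2}$. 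Because $\int_0^t(t-s)^{\alpha-1}s^{-\alpha}\,ds=\Gamma(\alpha)\Gamma(1-\alpha)$ is a finite Beta integral (independent of $t$), the right-hand side tends to $0$ by \eqref{eq:condCell}. I would then differentiate under the limit: since $\frac{d}{dt}I_t^\alpha G_L=D_t^{1-\alpha}G_L=V_L^H$ converges uniformly on $[t_0,\infty)$ to $V^H$ in $L_2(\Omega\times\bS^2)$ by Lemma~\ref{unif-sure}, the differentiation-of-uniform-limits argument (the \cite[Theorem~7.17]{Rudin1976} technique already employed in Lemma~\ref{dUH}) shows that $I_t^\alpha G$ is differentiable with $D_t^{1-\alpha}G(t)=\frac{d}{dt}I_t^\alpha G(t)=V^H(t)$.

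Granting both identities, the conclusion is immediate: for $t\geq t_0$,
\[
\bE\big[\|D_t^{1-\alpha}G_L(t)-D_t^{1-\alpha}G(t)\|_{L_2(\bS^2)}^2\big]
=\bE\big[\|V_L^H(t)-V^H(t)\|_{L_2(\bS^2)}^2\big]\longrightarrow 0
\]
as $L\to\infty$, by Lemma~\ref{unif-sure}. I expect the main obstacle to be the middle step, namely justifying that $D_t^{1-\alpha}$ may be taken inside the infinite series defining $G$. The delicate point there is the behaviour near $s=0$: the series $G(s)$ is not uniformly square-integrable but only satisfies $\|G(s)\|_{L_2(\Omega\times\bS^2)}=O(s^{-\alpha})$ by \eqref{N-Simon}, so the convolution against the singular kernel $(t-s)^{\alpha-1}$ must be controlled through the Beta integral above, while the passage of the $t$-derivative through the limit is handled separately by the uniform convergence $V_L^H\to V^H$ from Lemma~\ref{unif-sure}. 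Once $D_t^{1-\alpha}$ is split as $\frac{d}{dt}I_t^\alpha$ and these two interchanges are treated independently, the estimates \eqref{N-Simon}, \eqref{eq:cauchyCond2} and Lemma~\ref{unif-sure} supply all the required uniformity.
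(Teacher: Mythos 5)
Your proposal is correct and follows essentially the same route as the paper: the termwise identity $D_t^{1-\alpha}G_L(t)=V_L^H(t)$ for the finite sum, followed by an appeal to Lemma~\ref{unif-sure}. The one place where you go beyond the paper is the middle step: the paper's proof only observes that $\{D_t^{1-\alpha}G_L\}$ is Cauchy in $L_2(\Omega\times\bS^2)$ and then writes the conclusion directly, leaving implicit the identification of the limit of $D_t^{1-\alpha}G_L$ with $D_t^{1-\alpha}G$. Your argument supplies exactly this missing link, by splitting $D_t^{1-\alpha}=\frac{d}{dt}I_t^{\alpha}$, controlling $I_t^{\alpha}G_L\to I_t^{\alpha}G$ through Minkowski's integral inequality, the bound \eqref{N-Simon} and the Beta integral $\int_0^t(t-s)^{\alpha-1}s^{-\alpha}\,ds=\Gamma(\alpha)\Gamma(1-\alpha)$, and then passing the $t$-derivative through the limit via the uniform convergence $V_L^H\to V^H$ of Lemma~\ref{unif-sure} (the same Rudin-type device the paper uses in Lemma~\ref{dUH}). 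This makes your version strictly more complete than the printed proof; the only cosmetic caveat is that the case $\alpha=1$ should be noted separately (there $D_t^{0}$ is the identity and the claim reduces to \eqref{eq:cauchyCond2}), since the Beta integral argument requires $\alpha<1$.
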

\begin{proof}

Using Lemma 3.10 in \cite{Gorenflo2014} we have
\begin{align*}
   D_t^{1-\alpha} G_L(t) &= V_L^H(t),
\end{align*}
which implies, by Lemma \ref{unif-sure}, that $\{D_t^{1-\alpha} G_L\}$ is a Cauchy sequence in $L_2(\Omega\times \bS^2)$
, i.e., 
we can choose $L_0$ independent of 
$t$ such that for all $L,M \ge L_0$, there holds
\[
\bE \big[\|D_{t}^{1-\alpha} G_L(t) - D_{t}^{1-\alpha} G_M(t) \|^2_{L_2(\bS^2)} \big]\le \epsilon.
\]
Hence,
\[
\bE \big[\| D_{t}^{1-\alpha} G_L(t) - D_{t}^{1-\alpha} G(t) \|^2_{L_2(\bS^2)}\big] \le \epsilon,
\]
which completes the proof.
\end{proof}
\begin{lem}\label{lem:DalphaDelta}
For $t \in [t_0,T]$, we have 
\[
\lim_{L\to \infty}  
\bE \Big[\left\| V_L^H(t) - D_{t}^{1-\alpha} \Delta_{\bS^2} U^H(t)\right\|^2_{L_2(\bS^2)}\Big] = 0.
\]   
\end{lem}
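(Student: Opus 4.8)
The plan is to reduce the statement to Lemma~\ref{lem:Dalpha} by identifying $D_{t}^{1-\alpha}\Delta_{\bS^2}U^H(t)$ with $D_{t}^{1-\alpha}G(t)$ and recalling that $V_L^H(t)=D_{t}^{1-\alpha}G_L(t)$. First I would apply the Laplace--Beltrami operator term by term to the expansion \eqref{homo} of $U^H$. Using the eigenvalue relation \eqref{beltrami}, namely $\Delta_{\bS^2}Y_{\ell,m}=-\lambda_{\ell}Y_{\ell,m}$, and noting that the $\ell=0$ term drops out because $\lambda_0=0$, each truncated field satisfies
\[
\Delta_{\bS^2}U^H_L(t)=\sum_{\ell=1}^{L}(-\lambda_{\ell})E_{\alpha}(-\lambda_{\ell}t^\alpha)\sum_{m=-\ell}^{\ell}\widehat{\xi}_{\ell,m}Y_{\ell,m}=G_L(t),
\]
which is exactly the partial sum defined in \eqref{GL}. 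Since it was already shown, via the Cauchy estimate \eqref{eq:cauchyCond2} under the condition \eqref{eq:condCell}, that $G_L\to G$ in $L_2(\Omega\times\bS^2)$, and since $\Delta_{\bS^2}$ acts through its spectral decomposition on $L_2(\bS^2)$, I can identify $\Delta_{\bS^2}U^H(t)=G(t)$ in the $L_2(\Omega\times\bS^2)$ sense.

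Having established $\Delta_{\bS^2}U^H(t)=G(t)$, applying $D_{t}^{1-\alpha}$ to both sides gives $D_{t}^{1-\alpha}\Delta_{\bS^2}U^H(t)=D_{t}^{1-\alpha}G(t)$. On the other hand, Lemma~\ref{lem:Dalpha}, via Lemma~3.10 in \cite{Gorenflo2014}, already records that $D_{t}^{1-\alpha}G_L(t)=V_L^H(t)$. Consequently the target quantity can be rewritten as
\[
\bE\Big[\big\|V_L^H(t)-D_{t}^{1-\alpha}\Delta_{\bS^2}U^H(t)\big\|^2_{L_2(\bS^2)}\Big]=\bE\Big[\big\|D_{t}^{1-\alpha}G_L(t)-D_{t}^{1-\alpha}G(t)\big\|^2_{L_2(\bS^2)}\Big].
\]
I would then invoke Lemma~\ref{lem:Dalpha} directly, which asserts precisely that the right-hand side tends to $0$ as $L\to\infty$ for $t\geq t_0$, and in particular for $t\in[t_0,T]$. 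This closes the argument.

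The main obstacle is the rigorous justification of the identity $\Delta_{\bS^2}U^H(t)=G(t)$: one must be careful that the spatial differential operator genuinely commutes with the infinite summation in the $L_2(\Omega\times\bS^2)$ sense, rather than merely formally. This rests on the two facts assembled above, namely $G_L=\Delta_{\bS^2}U^H_L$ together with the $L_2$-convergence $G_L\to G$ established through \eqref{eq:cauchyCond2}, so that $G$ is exactly the $L_2(\Omega\times\bS^2)$-realization of $\Delta_{\bS^2}U^H$. Once this identification is secured, everything else is a direct citation of Lemma~\ref{lem:Dalpha}, and no further estimation is required.
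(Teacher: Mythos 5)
Your proposal is correct and follows essentially the same route as the paper: identify $\Delta_{\bS^2}U^H(t)$ with the $L_2(\Omega\times\bS^2)$-limit $G(t)$ of $G_L(t)=\Delta_{\bS^2}U_L^H(t)$, then combine the identity $D_t^{1-\alpha}G_L(t)=V_L^H(t)$ with Lemma~\ref{lem:Dalpha}. The only difference is cosmetic: the paper justifies the identification $\Delta_{\bS^2}U^H(t)=G(t)$ by citing Satz 4.1.2 of Berens--Butzer--Pawelke, whereas you argue it via the spectral (closedness) property of $\Delta_{\bS^2}$ together with the Cauchy estimate \eqref{eq:cauchyCond2}, which amounts to the same fact.
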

\begin{proof}
Using Theorem 4.1.2 (see~\cite[Satz 4.1.2] {Berens_et_al1968}), we can show that
\[
\lim_{L\to \infty} \bE\big[\|\Delta_{\bS^2} U^H(t) - G_L(t)\|_{L_2(\bS^2)}^2\big] = 0.
\]
By Lemma~\ref{lem:Dalpha}, noting that $D_{t}^{1-\alpha} G_L(t) = V_L^H(t)$, 
we obtain the conclusion of the lemma.
\end{proof}
\begin{prop}\label{Theo1}
Let the angular power spectrum $\{\calC_{\ell}:\ell\in\N_{0}\}$ of the isotropic Gaussian random field $\xi$ satisfy assumption \eqref{eq:condCell}. 
Then the random field $U^{H}$ defined by \eqref{homo} satisfies \eqref{to}, in the $L_2(\Omega\times\bS^2)$ sense, under the initial condition $U^{H}(0)=\xi$. 
In particular, for a given $t_0>0$, 
we have
\[
\sup_{t \geq t_0} \bE \Bigg[\left\| \frac{d}{dt} U^H(t) - D^{1-\alpha}_t 
\Delta_{\bS^2} U^H(t) \right\|^2_{L_2(\bS^2)}\Bigg] = 0.
\]
\end{prop}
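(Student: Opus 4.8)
The plan is to prove the identity first at the level of the truncated field $U_L^H$, where every operator acts term by term on a finite sum, and then to pass to the limit $L\to\infty$ using the convergence lemmas already established, invoking uniqueness of limits in $L_2(\Omega\times\bS^2)$ to identify the two sides of \eqref{to} as a common limit.

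First I would record the exact identity for the truncation. From \eqref{homo} the partial sum is $U_L^H(t)=\sum_{\ell=0}^L\sum_{m=-\ell}^\ell E_\alpha(-\lambda_\ell t^\alpha)\widehat{\xi}_{\ell,m}Y_{\ell,m}$, so by \eqref{VL} one has $\frac{d}{dt}U_L^H(t)=V_L^H(t)$. On the other hand, the eigenrelation \eqref{beltrami} gives $\Delta_{\bS^2}U_L^H(t)=G_L(t)$ with $G_L$ as in \eqref{GL}, and Lemma~3.10 of \cite{Gorenflo2014} (already used in Lemma~\ref{lem:Dalpha}) yields $D_t^{1-\alpha}G_L(t)=V_L^H(t)$. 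Hence, for every $L$ and every $t>0$,
\[
\frac{d}{dt}U_L^H(t)=V_L^H(t)=D_t^{1-\alpha}\Delta_{\bS^2}U_L^H(t),
\]
so the truncated field solves the truncated equation exactly.

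Next I would identify the common limit $V^H$ of $V_L^H$ with each side of \eqref{to}. By Lemma~\ref{unif-sure}, $V_L^H(t)\to V^H(t)$ in $L_2(\Omega\times\bS^2)$, uniformly for $t\ge t_0$. By Lemma~\ref{dUH}, for each $\widehat t\ge t_0$ we also have $V_L^H(\widehat t)\to \frac{dU^H}{dt}(\widehat t)$ in the same norm; uniqueness of $L_2$-limits then forces $\frac{dU^H}{dt}(\widehat t)=V^H(\widehat t)$. Similarly, Lemma~\ref{lem:DalphaDelta} gives $V_L^H(t)\to D_t^{1-\alpha}\Delta_{\bS^2}U^H(t)$, so again by uniqueness $D_t^{1-\alpha}\Delta_{\bS^2}U^H(t)=V^H(t)$. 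Combining the two identifications yields, for every $t\ge t_0$,
\[
\frac{d}{dt}U^H(t)=V^H(t)=D_t^{1-\alpha}\Delta_{\bS^2}U^H(t),
\]
so the integrand in the supremum vanishes identically and the supremum over $t\ge t_0$ is $0$. Since $T$ in Lemmas~\ref{dUH} and \ref{lem:DalphaDelta} is arbitrary, this holds for all $t\ge t_0$. The initial condition is immediate: $E_\alpha(0)=1$ gives $U^H(0)=\sum_{\ell,m}\widehat{\xi}_{\ell,m}Y_{\ell,m}=\xi$ by \eqref{xi}.

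The genuinely delicate point—interchanging the fractional time derivative and the Laplace--Beltrami operator with the infinite spherical-harmonic sum—has already been absorbed into Lemmas~\ref{unif-sure}, \ref{dUH}, \ref{lem:Dalpha} and \ref{lem:DalphaDelta}, whose uniform $L_2(\Omega\times\bS^2)$ estimates rest on the summability assumption \eqref{eq:condCell} together with the decay bounds \eqref{EMitagg} and \eqref{N-Simon}. Thus within the proposition the only real care needed is the bookkeeping of which lemma supplies which limit; the main obstacle is ensuring that the convergences of $V_L^H$ to $\frac{dU^H}{dt}$ and to $D_t^{1-\alpha}\Delta_{\bS^2}U^H$ take place in the same topology, so that the two limits may legitimately be identified with the single object $V^H$.
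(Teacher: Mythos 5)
Your proposal is correct and follows essentially the same route as the paper: establish the exact identity $\frac{d}{dt}U_L^H(t)=V_L^H(t)=D_t^{1-\alpha}\Delta_{\bS^2}U_L^H(t)$ for the truncation, then pass to the limit using Lemmas~\ref{unif-sure}, \ref{dUH} and \ref{lem:DalphaDelta}. The only cosmetic difference is that you identify both sides with the common limit $V^H$ by uniqueness of $L_2$-limits, whereas the paper reaches the same conclusion via the triangle inequality and an $\epsilon$-argument; the two are logically equivalent.
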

\begin{proof}
Let us fix $L\geq1$, then we have
\begin{equation*}
\begin{split}
\frac{d}{dt} U^H_L(t)  &=  \sum_{\ell=1}^L \sum_{m=-\ell}^{\ell}
\frac{d}{dt} E_{\alpha}(-\lambda_{\ell} t^{\alpha}) \widehat{\xi}_{\ell,m}
Y_{\ell,m}     \\
&= \sum_{\ell=1}^L \sum_{m=-\ell}^{\ell}(-\lambda_{\ell} ) t^{\alpha-1} E_{\alpha,\alpha}(-\lambda_{\ell}t^\alpha)
\widehat{\xi}_{\ell,m}
Y_{\ell,m}= V^H_L(t).
\end{split}
\end{equation*}
Using Lemma 3.10 in \cite{Gorenflo2014} we have
\begin{align*}
D_t^{1-\alpha} \Delta_{\bS^2} U^H_L(t)
	& = \sum_{\ell=0}^L\sum_{m=-\ell}^\ell
			(-\lambda_{\ell}) D_t^{1-\alpha} E_{\alpha}(-\lambda_{\ell} t^{\alpha}) \widehat{\xi}_{\ell,m} Y_{\ell,m} \\
&=\sum_{\ell=1}^L\sum_{m=-\ell}^\ell
			\frac{d}{dt} (E_{\alpha}(-\lambda_{\ell} t^{\alpha})-1)
			\widehat{\xi}_{\ell,m} Y_{\ell,m}\\
			&= \sum_{\ell=1}^L\sum_{m=-\ell}^\ell
			(-\lambda_{\ell}) t^{\alpha-1}E_{\alpha,\alpha}(-\lambda_{\ell} t^{\alpha})\widehat{\xi}_{\ell,m} Y_{\ell,m} = V_L^H(t).   
\end{align*}
Combining the above two equations, then for any fixed $\omega\in\Omega$ we obtain
\begin{equation}\label{eq:finite}
\frac{d}{dt} U^H_L (t)- D^{1-\alpha}  \Delta_{\bS^2} U^H_L (t) 
 =  V_L^H(t) - V_L^H(t) = 0.
\end{equation}
It follows from \eqref{eq:finite} that
\[
\bE\Bigg[ \left\|\frac{d}{dt} U^H_L(t)-D_{t}^{1-\alpha}  \Delta_{\bS^2} U_{L}^H(t)\right\|^2_{L_2(\bS^2)}\Bigg] = 0.
\]
By the triangle inequality 
\begin{align*}
&\bE\Bigg[\left\| \frac{d}{dt} U^H(t) - D_{t}^{1-\alpha}  \Delta_{\bS^2} U^H(t)   \right\|^2_{L_2(\bS^2)}\Bigg]
\le 2\bE \Bigg[
\left\| \frac{d}{dt} U^H(t) - \frac{d}{dt} U^H_L(t) \right\|^2_{L_2(\bS^2)}\Bigg]\\
&+  2\bE\Bigg[ \left\|D_{t}^{1-\alpha}  \Delta_{\bS^2} U^H_L(t)-D_{t}^{1-\alpha}  \Delta_{\bS^2} U^H(t)\right\|^2_{L_2(\bS^2)}\Bigg].
\end{align*}
By Lemma~\ref{dUH}, we can choose $L$ so that 
\begin{equation}
\bE \Bigg[\left\| \frac{d}{dt} U^H(t) - \frac{d}{dt} U^H_L(t) \right\|^2_{L_2(\bS^2)}\Bigg] \le 
\frac{\epsilon}{4}.
\end{equation}
By Lemma~\ref{lem:DalphaDelta}, we can choose $L$ so that
\begin{equation}
\bE \Big[\left\|D_{t}^{1-\alpha}  \Delta_{\bS^2} U^H_L(t)-D_{t}^{1-\alpha}  \Delta_{\bS^2} U^H(t)\right\|^2_{L_2(\bS^2)}\Big]\le \frac{\epsilon}{4}.    
\end{equation}
So the proposition is proved.
\end{proof}
The following result shows that the solution $U^{H}$ to the equation (\ref{to}) is a centered, 2-weakly isotropic Gaussian random field on $\bS^2$.
	Here we assume that the angular power spectrum  
	$\{\calC_{\ell}:\ell\in\N_{0}\}$ of $\xi$ decays algebraically with order $\kappa_1>2$, i.e., there exist constants $\widetilde{D},\widetilde{C}>0$ such that
	\begin{align}\label{New-Cl}
		\calC_{\ell}\leq\begin{cases} 
			\widetilde{D}, & \ell=0, \\
			\widetilde{C}\ell^{-\kappa_1}, & \ell\geq1,\ \kappa_1>2.
		\end{cases}
	\end{align}
	For $\kappa_1>2$ and $\widetilde{C}>0$, we let
	\begin{align}\label{Ckappa}
		\widetilde{C}_{\kappa_1}:=\bigg(\widetilde{C}\Big(\dfrac{2}{\kappa_1-2}+\dfrac{1}{\kappa_1-1}\Big)\bigg)^{1/2}.
	\end{align}
	
	\begin{prop}\label{Prop4}
		Let the field $U^{H}$, given in {\rm(\ref{homo})}, be the solution to the equation {\rm(\ref{to})} under the initial condition $U^{H}(0)= \xi$, where $\xi$ is a centered, $2$-weakly isotropic Gaussian random field on $\bS^2$. Let $\{\calC_{\ell}:\ell\in\N_{0}\}$, the angular power spectrum of $\xi$, satisfy \eqref{New-Cl}. For a fixed $t\in(0,\infty)$, $U^{H}(t)$ is a centred, $2$-weakly isotropic Gaussian random field on $\bS^2$, and its random coefficients
		\begin{align}\label{xile}
			\widehat{U^{H}}_{\ell,m}(t)=E_\alpha(-\lambda_{\ell} t^\alpha) \widehat{\xi}_{\ell,m},
		\end{align} 
		satisfy for $\ell,\ell^\prime\in\N_{0}$, $m=-\ell,\dots,\ell$ and $m^\prime=-\ell^\prime,\dots,\ell^\prime$, 
		\begin{align}\label{var-xile}
			\bE \left[\widehat{U^{H}}_{\ell,m}(t)\overline{\widehat{U^{H}}_{\ell^\prime,m^\prime}(t)}\right]=(E_\alpha(-\lambda_{\ell} t^\alpha))^2 \calC_{\ell}\delta_{\ell\ell^\prime}\delta_{mm^\prime},
		\end{align}
		where $\lambda_{\ell}$ is given in \eqref{lam} and $\delta_{\ell\ell^\prime}$ is the Kronecker delta function.
	\end{prop}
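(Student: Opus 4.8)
The plan is to transfer everything to the already-known probabilistic structure of the initial field $\xi$, using that for fixed $t$ and $\ell$ the factor $E_\alpha(-\lambda_\ell t^\alpha)$ is a \emph{deterministic, real} scalar lying in $(0,1]$ by Remark~\ref{rem1}. First I would verify that $U^H(t)$ is well-defined in $L_2(\Omega\times\bS^2)$: by Parseval its squared norm equals $\sum_{\ell=0}^\infty (E_\alpha(-\lambda_\ell t^\alpha))^2 (2\ell+1)\calC_\ell$, which is bounded by $\sum_{\ell=0}^\infty (2\ell+1)\calC_\ell<\infty$ thanks to Remark~\ref{rem1} together with \eqref{eq:condCell} (itself implied by \eqref{New-Cl}, since the terms there behave like $\ell^{1-\kappa_1}$ and $\kappa_1>2$). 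Because $\{Y_{\ell,m}\}$ is an orthonormal basis and \eqref{homo} converges in $L_2(\Omega\times\bS^2)$, I may take the inner product of \eqref{homo} against each $Y_{\ell,m}$ term by term to read off the Fourier coefficients $\widehat{U^H}_{\ell,m}(t)=E_\alpha(-\lambda_\ell t^\alpha)\widehat{\xi}_{\ell,m}$, which is \eqref{xile}.

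For the second moments I would pull the deterministic real factors out of the expectation and invoke the uncorrelatedness $\bE[\widehat{\xi}_{\ell,m}\overline{\widehat{\xi}_{\ell',m'}}]=\calC_\ell\delta_{\ell\ell'}\delta_{mm'}$ of the coefficients of $\xi$ recalled in the preliminaries, giving $\bE[\widehat{U^H}_{\ell,m}(t)\overline{\widehat{U^H}_{\ell',m'}(t)}] = E_\alpha(-\lambda_\ell t^\alpha)E_\alpha(-\lambda_{\ell'} t^\alpha)\calC_\ell\delta_{\ell\ell'}\delta_{mm'}$; the Kronecker deltas collapse the product of the two Mittag-Leffler factors to $(E_\alpha(-\lambda_\ell t^\alpha))^2$, which is precisely \eqref{var-xile}. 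Centredness is immediate, since $\bE[\widehat{U^H}_{\ell,m}(t)]=E_\alpha(-\lambda_\ell t^\alpha)\bE[\widehat{\xi}_{\ell,m}]=0$.

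Gaussianity follows because, for each fixed $\bsx$, the partial sums of $U^H(\bsx,t)$ converge in $L_2(\Omega)$ and are finite linear combinations of the jointly Gaussian variables $\widehat{\xi}_{\ell,m}$ (Proposition~\ref{Coro}), hence Gaussian, so their $L_2(\Omega)$-limit $U^H(\bsx,t)$ is Gaussian; the same argument applied to any finite vector $(U^H(\bsx_1,t),\dots,U^H(\bsx_k,t))$ shows the field is Gaussian. For 2-weak isotropy, the zero mean is trivially rotation-invariant, and the covariance function is computed by the addition theorem \eqref{addition} as
\[
\bE\big[U^H(\bsx,t)\,\overline{U^H(\bsy,t)}\big]=\sum_{\ell=0}^\infty (E_\alpha(-\lambda_\ell t^\alpha))^2 (2\ell+1)\,\calC_\ell\,P_\ell(\bsx\cdot\bsy),
\]
which depends on $\bsx,\bsy$ only through $\bsx\cdot\bsy$ and is therefore unchanged under $\bsx,\bsy\mapsto R\bsx,R\bsy$ for any $R\in SO(3)$; this is exactly 2-weak isotropy.

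I expect the only real care to be the two exchanges of summation with expectation/inner product: the term-by-term extraction of the Fourier coefficients in the first step and the term-by-term summation producing the covariance function above. Both are justified by the absolute convergence of $\sum_{\ell}(2\ell+1)\calC_\ell$ (guaranteed by \eqref{New-Cl}) combined with the uniform bound $E_\alpha(-\lambda_\ell t^\alpha)\in(0,1]$; every other step is a direct consequence of linearity and of the known covariance structure of $\xi$.
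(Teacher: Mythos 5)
Your proposal is correct and follows essentially the same route as the paper: identify the coefficients $\widehat{U^H}_{\ell,m}(t)=E_\alpha(-\lambda_\ell t^\alpha)\widehat{\xi}_{\ell,m}$, pull the deterministic Mittag--Leffler factors out of the expectations, and use the addition theorem to exhibit the covariance as a function of $\bsx\cdot\bsy$. The only (cosmetic) difference is in the Gaussianity step, where you invoke the standard fact that an $L_2(\Omega)$-limit of (jointly) Gaussian variables is Gaussian, while the paper proves this in situ by computing the characteristic functions of the partial sums $U^H_N(t)$ and applying the continuity theorem; your remark that the argument extends to finite vectors $(U^H(\bsx_1,t),\dots,U^H(\bsx_k,t))$ is in fact slightly more explicit about multivariate Gaussianity than the paper's own proof.
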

\begin{proof}
Consider the field $U^{H}$ defined by \eqref{homo}.
Since $\xi$ is centred we have $\bE[U^{H}(t)]=0$. Let $t\in(0,\infty)$ and $\bsx,\bsy\in\bS^2$, then by \eqref{homo}, we can write	
\begin{align*}
			\bE \left[U^{H}(\bsx,t)U^{H}(\bsy,t)\right]&=\sum_{\ell=0}^\infty\sum_{\ell^\prime=0}^\infty \sum_{m=-\ell}^{\ell}
			\sum_{m\prime=-\ell^\prime}^{\ell^\prime} E_\alpha(-\lambda_{\ell} t^\alpha)E_\alpha(-\lambda_{\ell^\prime} t^\alpha)\\ &\times\bE\left[\widehat{\xi}_{\ell,m}\overline{\widehat{\xi}_{\ell^\prime,m^\prime}}\right] Y_{\ell,m}(\bsx)\overline{Y_{\ell^\prime,m^\prime}(\bsy)}.
		\end{align*}
Since  $\xi$ is a centred, 2-weak isotropic Gaussian random field we have $\bE\left[\widehat{\xi}_{\ell,m}\overline{\widehat{\xi}_{\ell^\prime,m^\prime}}\right]=\calC_{\ell}  \delta_{\ell \ell^\prime} \delta_{m m^\prime}$, and by the addition theorem (see equation \eqref{addition}) and Proposition \ref{Coro}, we have
		\begin{align*}
			\bE\left[U^{H}(\bsx,t)U^{H}(\bsy,t)\right]&=\sum_{\ell=0}^\infty(E_\alpha(-\lambda_{\ell} t^\alpha))^2\calC_{\ell}\sum_{m=-\ell}^{\ell} Y_{\ell,m}(\bsx)\overline{Y_{\ell,m}(\bsy)}\\
			&=\sum_{\ell=0}^\infty(E_\alpha(-\lambda_{\ell} t^\alpha))^2(2\ell+1)\calC_\ell P_{\ell}(\bsx\cdot\bsy),
		\end{align*}
		where $P_{\ell}(\cdot)$, $\ell\in\N_{0}$, is the Legendre polynomial of degree $\ell$. As $P_{\ell}(\bsx\cdot\bsy)$ depends only on the inner product of $\bsx$ and $\bsy$, we conclude that the covariance function $\bE\left[U^{H}(\bsx,t)U^{H}(\bsy,t)\right]$ is rotationally invariant. Note that for $\ell,\ell^\prime\in\N_{0}$, $m=-\ell,\dots,\ell$ and $m^\prime=-\ell^\prime,\dots,\ell^\prime$, by the 2-weak isotropy of $\xi$, and Proposition \ref{Coro}, 
		\begin{align*}
			\bE \left[\widehat{U^{H}}_{\ell,m}(t)\overline{\widehat{U^{H}}_{\ell^\prime,m^\prime}(t)}\right]&=E_\alpha(-\lambda_{\ell} t^\alpha)E_\alpha(-\lambda_{\ell^\prime} t^\alpha) \bE\left[\widehat{\xi}_{\ell,m}\overline{\widehat{\xi}_{\ell^\prime,m^\prime}}\right]\\
			&=(E_\alpha(-\lambda_{\ell} t^\alpha))^2 \calC_{\ell}\delta_{\ell\ell^\prime}\delta_{mm^\prime}.
		\end{align*}
		To prove that $U^{H}(t)$ is Gaussian, first note that its variance can be bounded, using \eqref{New-Cl} and that $0< E_{\alpha}(\cdot)\leq1$, by \begin{align}\label{Vpf}
			Var\left[U^{H}(t)\right]&=\sum_{\ell=0}^\infty(E_\alpha(-\lambda_{\ell} t^\alpha))^2(2\ell+1)\calC_\ell\notag\\
			&\leq  \widetilde{D}+ \widetilde{C}\sum_{\ell=1}^\infty (2\ell+1)\ell^{-\kappa_1}\notag\\
			&\leq \widetilde{D}+ \widetilde{C}\int_{1}^\infty \big(2x^{1-\kappa_1}+x^{-\kappa_1}\big)dx\notag\\
			&\leq  \widetilde{D}+ \widetilde{C}\Big(\frac{2}{\kappa_1-2}+\frac{1}{\kappa_1-1}\Big)\notag\\
			&\leq \widetilde{D}+ \widetilde{C}_{\kappa_1}^2<\infty.
		\end{align}
		Now let $T_{\ell}(t)$, $\ell\in\N_0$, be defined as  
		\begin{align*}
			T_{\ell}(t):=\sum_{m=-\ell}^{\ell}\widehat{U^{H}}_{\ell,m}(t)Y_{\ell,m}= \sum_{m=-\ell}^{\ell}E_\alpha(-\lambda_{\ell} t^\alpha) \widehat{\xi}_{\ell,m}Y_{\ell,m},
		\end{align*}
		and note that since $\widehat{\xi}_{\ell,m}$ are centred, independent Gaussian random variables (by the 2-weak isotropy of $\xi$), then $T_{\ell}$
		is a Gaussian random variable with mean zero and variance, using \eqref{var-xile}, \eqref{addition}, and properties of spherical harmonics, given by
		\begin{align}\label{var-T}
			Var[T_{\ell}]=(E_\alpha(-\lambda_{\ell} t^\alpha))^2(2\ell+1)\calC_\ell.
		\end{align}
		Let $U^H_N(t)$, $N\geq1$, be defined as
		\[
	U^H_N(t):=\sum_{\ell=0}^{N}T_{\ell}(t).
		\]
		It is easy to see, using \eqref{var-T}, that
		\begin{align}\label{var-Un}
			Var[U^H_N(t)]=\sum_{\ell=0}^{N}Var[T_{\ell}(t)]=\sum_{\ell=0}^{N}(E_\alpha(-\lambda_{\ell} t^\alpha))^2(2\ell+1)\calC_\ell,
		\end{align}
		and, using \eqref{Vpf}, 
		\begin{align}\label{Lim}
			\lim_{N\to\infty}Var[U^H_N(t)]=\sum_{\ell=0}^{\infty}(E_\alpha(-\lambda_{\ell} t^\alpha))^2(2\ell+1)\calC_\ell<\infty.
		\end{align}
		The characteristic function $\varphi_{U^H_N(t)}(r)$ of $U^H_N(t)$ can be written, using \eqref{var-T} and \eqref{var-Un}, as
		\begin{align*}
			\varphi_{U^H_N(t)}(r)=\bE\Big[e^{\mi r U^H_N(t)}\Big]&= \prod_{\ell=0}^{N}\bE\Big[e^{\mi r T_{\ell}(t)}\Big]
			=\prod_{\ell=0}^{N}  \varphi_{T_{\ell}(t)}(r)\\
			&=e^{-\frac{1}{2}r^2 \sum_{\ell=0}^{N}(E_\alpha(-\lambda_{\ell} t^\alpha))^2(2\ell+1)\calC_\ell},   
		\end{align*}
		since $T_{\ell},\ \ell\geq0$, are independent Gaussian random variables.
		
		\indent
		Hence,
		\[
		\lim_{N\to\infty} \varphi_{U^H_N(t)}(r)= e^{-\frac{1}{2}r^2 \sum_{\ell=0}^{\infty}(E_\alpha(-\lambda_{\ell} t^\alpha))^2(2\ell+1)\calC_\ell},
		\]
		which is the characteristic function of some Gaussian random variable with mean zero and variance $\sum_{\ell=0}^{\infty}(E_\alpha(-\lambda_{\ell} t^\alpha))^2(2\ell+1)\calC_\ell<\infty$ (by \eqref{Lim}). Then by the continuity theorem (see \cite{Durrett}, Theorem 3.3.6),
		we conclude that the field $U^{H}$ is Gaussian, thus completing the proof. 
	\end{proof}

	\begin{rem} \label{rem4}
		Let $\{\calC_{\ell}:\ell\in\N_{0}\}$ be the angular power spectrum of $\xi$. Then the Fourier coefficients $\widehat{U^H}_{\ell,m}$ of $U^{H}$ can be written, using \eqref{xile} and Proposition {\rm\ref{Coro}}, as
		\begin{align}\label{UH-complexFourier}
			\widehat{U^H}_{\ell,m}(t)&:= 
			\begin{cases} 
				\sqrt{\calC_{\ell}}E_\alpha(-\lambda_{\ell} t^\alpha)Z_{\ell,0}^{(1)}, & m=0,	\\
				\sqrt{\frac{\calC_{\ell}}{2}}E_\alpha(-\lambda_{\ell} t^\alpha)\Big(Z_{\ell,m}^{(1)}
				- \mi Z_{\ell,m}^{(2)}\Big), & m=1,\ldots,\ell, \\
				(-1)^m\sqrt{\frac{\calC_{\ell}}{2}}E_\alpha(-\lambda_{\ell} t^\alpha)\Big(Z_{\ell,|m|}^{(1)}
				+ \mi Z_{\ell,|m|}^{(2)}\Big), & m=-\ell,\ldots,-1,
			\end{cases}
		\end{align}
		where $Z_{\ell,m_1}^{(1)},Z_{\ell,m_2}^{(2)}\in\mathcal{Z}_{\ell},\ \ell\in\N_{0}, m_1=0,\dots,\ell$, $m_2=1,2,\dots,\ell$ and $\mathcal{Z}_{\ell}$ is defined by \eqref{Z-iid}. 
		
		\indent
		Moreover, the solution $U^{H}(t)$, $t\in(0,\infty)$, can be represented, using \eqref{UH-complexFourier}, as
		\begin{align}\label{New-HomSol}
			U^{H}(t)&= \sum_{\ell=0}^{\infty}\sum_{m=-\ell}^\ell \widehat{U^H}_{\ell,m}(t) Y_{\ell,m}\notag\\
			&=\sum_{\ell=0}^{\infty}\Bigg(E_\alpha(-\lambda_{\ell} t^\alpha)\Big( \sqrt{\calC_{\ell}}Z_{\ell,0}^{(1)}Y_{\ell,0}\notag\\
			&+\sqrt{2\calC_{\ell}}\sum_{m=1}^{\ell}\Bigl(Z_{\ell,m}^{(1)}\Re Y_{\ell,m}(\bsx)+Z_{\ell,m}^{(2)}\Im Y_{\ell,m}\Big)\Big)\Bigg),
		\end{align}
where the above expansion is convergent in $L_2(\Omega\times\bS^2)$.
	\end{rem}

\section{Solution of the inhomogeneous problem}\label{Sec5}
Consider a random field $U^{I}(t)\in L_2(\Omega\times\bS^2), t\in (0,\infty)$.
We say that $U^I$ is a solution to  \eqref{pde} if the equality in \eqref{UIintform} holds in the sense of $L_2(\Omega\times\bS^2)$, that is
\[
\sup_{t \geq \tau} \bE \left\| U^{I}(t)-  \frac{1}{\Gamma(\alpha)}\int_\tau^t\frac{\Delta_{\bS^2} U^{I}(s)}{(t-s)^{1-\alpha}}ds- W_{\tau}(t)\right\|^2_{L_2(\bS^2)} = 0.
\]
This section derives the solution 
$U^{I}(t)\in L_2(\Omega\times\bS^2)$, $t\in(0,\infty)$, to the inhomogeneous equation \eqref{pde}, under the condition $U^{I}(t)=0$ for $t\in(0,\tau]$. 
	
Note that since $U^{I}\in L_2(\Omega\times\bS^2)$, we can write 
	\begin{align*}
		U^{I}(t) = \sum_{\ell=0}^\infty \sum_{m=-\ell}^{\ell} \widehat{U^I}_{\ell,m}(t) Y_{\ell,m}, \quad t\in(0,\infty),
	\end{align*}
	where the random coefficients $\widehat{U^I}_{\ell,m}$, $\ell\in\N_{0}$, $m=-\ell,\dots,\ell$, are given by
	\begin{align*}
		\widehat{U^I}_{\ell,m}(t)=\int_{\bS^2}U^{I}(\bsx,t)\overline{Y_{\ell,m}(\bsx)}\mu(d\bsx),\quad t\in(0,\infty).
	\end{align*}
	By multiplying both sides of equation \eqref{pde} by $\overline{Y_{\ell,m}}$ and integrating over $\bS^2$, we obtain a set of ordinary differential equations 
	\begin{equation}\label{Eqv1}
		d \widehat{U^I}_{\ell,m} + \lambda_{\ell} D_t^{1-\alpha} \widehat{U^I}_{\ell,m} dt = d \widehat{W}_{\ell,m,\tau}, \quad \widehat{U^I}_{\ell,m}(0) =0,
	\end{equation}
	where $\widehat{W}_{\ell,m,\tau}$ are the Fourier coefficients of $W_\tau$.
	
	\indent
	If $\widehat{W}_{\ell,m,\tau}$ were deterministic differentiable functions in time, then we could write \eqref{Eqv1} as
	\begin{equation}\label{Nn}
		\frac{d}{dt} \widehat{U^I}_{\ell,m} + \lambda_{\ell}D_t^{1-\alpha} \widehat{U^I}_{\ell,m}  = \frac{d}{dt} \widehat{W}_{\ell,m,\tau}, \quad \widehat{U^I}_{\ell,m}(0) = 0.
	\end{equation}
	
	\noindent
	By taking the Laplace transform of \eqref{Nn} (see equation \eqref{Laplace}), we get
	\begin{equation}\label{lap}
		z \widetilde{U^I}_{\ell,m}(z) - \widehat{U^I}_{\ell,m}(0) + \lambda_{\ell} z^{1-\alpha} \widetilde{U^I}_{\ell,m} (z) = e^{-z\tau} \widetilde{W}_{\ell,m,0}(z),
	\end{equation}
	where $\widetilde{U^I}_{\ell,m}(z)$ is the Laplace transform of $\widehat{U^I}_{\ell,m}(t)$ and $e^{-z\tau} \widetilde{W}_{\ell,m,0}(z)$ is the Laplace transform of $\frac{d}{dt}\widehat{W}_{\ell,m,\tau}(t)$.

	\indent
	Now solving equation \eqref{lap} for $\widetilde{U^I}_{\ell,m} (z)$, we obtain
	\begin{equation}\label{Lap2}
		\widetilde{U^I}_{\ell,m} (z) = e^{-z\tau}\frac{ \widetilde{W}_{\ell,m,0}(z) }{ z+ \lambda_{\ell} z^{1-\alpha}}.
	\end{equation}

	\noindent
	Using the convolution theorem (see \cite{Dyke}, Theorem 3.2), we could write   
	\[
	\calL^{-1}\Big\{ \frac{ \widetilde{W}_{\ell,m,0}(z) }{ z+ \lambda_{\ell} z^{1-\alpha}}\Big\}= 	\int_{0}^{t} E_{\alpha}( -\lambda_{\ell}(t-s)^\alpha) \frac{d}{ds} \widehat{W}_{\ell,m,0}(s)ds,
	\]
	where $\calL^{-1}(\cdot)$ denotes the inverse Laplace transform.
	
	\indent
	Now by taking the inverse Laplace transform in \eqref{Lap2} with the help of the second shift theorem (see \cite{Dyke}, Theorem 2.4) we obtain
	\begin{align*}
		\widehat{U^I}_{\ell,m}(t) &= \mathcal{H}(t-\tau)\int_{0}^{t-\tau}  E_{\alpha}( -\lambda_{\ell}(t-\tau-s)^\alpha) \frac{d}{ds} \widehat{W}_{\ell,m,0}(s) ds,
	\end{align*}
	where $\mathcal{H}(\cdot)$ is the Heaviside unit step function (i.e, $\mathcal{H}(t)=1$, for $t\geq0$, and zero for $t<0$).
	
	\indent	
	Since $\widehat{W}_{\ell,m,0}(\cdot)$ are $1$-dimensional, real-valued Brownian motions which are continuous but nowhere differentiable, we have
	to write instead 
	\begin{equation*}
		\widehat{U^I}_{\ell,m}(t) = \mathcal{H}(t-\tau)\int_{0}^{t-\tau} E_{\alpha}( -\lambda_{\ell} (t-\tau-s)^\alpha) d\widehat{W}_{\ell,m,0}(s),
	\end{equation*}
	where the integral is in the It\^{o} sense. 
	
	\indent
Thus the solution of equation \eqref{pde} takes the form, for $t>\tau$,
\begin{align}\label{NewSol2}
		U^{I}(t) = \sum_{\ell=0}^\infty \sum_{m=-\ell}^{\ell}\int_{0}^{t-\tau} E_{\alpha}( -\lambda_{\ell} (t-\tau-s)^\alpha) d \widehat{W}_{\ell,m,0}(s)Y_{\ell,m},
\end{align}
and $U^{I}(t)=0$ for $t\leq\tau$.

By the condition \eqref{eq:condAell} and Remark \ref{rem1}, for every $\ell\in\N_{0}$, we have $\sigma_{\ell,t,\alpha}^2\leq t$, which  guarantees the convergence of \eqref{NewSol2} in the $L_2(\Omega\times\bS^2)$ space.

For $L\in\N$, let 
\begin{equation}\label{VLIint}
V_L^{I}(t):=D_{t}^{1-\alpha}\Delta_{\bS^2} U_L^I(t)
=  \frac{1}{\Gamma(\alpha)} \frac{d}{dt}
\int_{\tau}^t \frac{\Delta_{\bS^2} U_L^I(s)}
{(t-s)^{1-\alpha}}ds.
\end{equation}

\begin{rem}
By Lemma \ref{frD}, we can express $V_L^{I}$ in the form
\begin{align}\label{VI}
V_L^{I}(t)&= \sum_{\ell=0}^L \sum_{m=-\ell}^{\ell} -\lambda_{\ell}D_{t}^{1-\alpha}\int_{\tau}^{t} E_{\alpha}( -\lambda_{\ell} (t-\tau-r)^\alpha) d \widehat{W}_{\ell,m,\tau}(r)Y_{\ell,m}\nonumber\\
&= \sum_{\ell=0}^L \sum_{m=-\ell}^{\ell} -\lambda_{\ell}\int_{\tau}^{t} D_{r}^{1-\alpha}E_{\alpha}( -\lambda_{\ell} r^\alpha) d \widehat{W}_{\ell,m,\tau}(r)Y_{\ell,m}\nonumber\\
&= \sum_{\ell=0}^L \sum_{m=-\ell}^{\ell} \int_{\tau}^{t} \Big(\dfrac{-\lambda_{\ell}}{\Gamma(\alpha)}\dfrac{d}{dr}\int_{0}^{r}\dfrac{E_{\alpha}( -\lambda_{\ell} s^\alpha)}{(r-s)^{1-\alpha}}ds\Big) d \widehat{W}_{\ell,m,\tau}(r)Y_{\ell,m}\nonumber\\
&= \sum_{\ell=0}^L \sum_{m=-\ell}^{\ell} \int_{\tau}^{t} \dfrac{d}{dr}\Big(E_{\alpha}( -\lambda_{\ell} r^\alpha)-1\Big)d \widehat{W}_{\ell,m,\tau}(r)Y_{\ell,m}\nonumber\\
&=\sum_{\ell=0}^L \sum_{m=-\ell}^{\ell}\int_{\tau}^{t} (-\lambda_{\ell})r^{\alpha-1} E_{\alpha,\alpha}( -\lambda_{\ell} r^\alpha) d \widehat{W}_{\ell,m,\tau}(r)Y_{\ell,m},
\end{align}
where the third step used Lemma 3.10 in \cite{Gorenflo2014}.
\end{rem}

\begin{lem}\label{cachu-UI}
Let $W_{\tau}$ be an $L_2(\bS^2)$-valued time-delayed Brownian motion. Let $\{\calA_{\ell}:\ell\in\N_{0}\}$, the angular power spectrum of $W_\tau$, satisfy \eqref{eq:condAell}.
Then, as $L\to\infty$, $V_{L}^{I}$ is convergent 
 to
\begin{equation}\label{eq:defVI}
		V^{I}(t):= \sum_{\ell=0}^{\infty}\sum_{m=-\ell}^\ell\int_{\tau}^{t} (-\lambda_{\ell})s^{\alpha-1} E_{\alpha.\alpha}( -\lambda_{\ell} s^\alpha) d \widehat{W}_{\ell,m,\tau}(s)Y_{\ell,m},\quad t\geq \tau,
\end{equation}
in the following sense
 \[
	\sup_{t \geq \tau} \bE \left\| V_L^I(t) - V^I(t) \right\|^2_{L_2(\bS^2)} 
	\to 0,\quad  \text{ as } L \to \infty.
\]
\end{lem}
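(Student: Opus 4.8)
The plan is to establish that $\{V_L^I\}_{L}$ is a Cauchy sequence in $L_2(\Omega \times \bS^2)$, with the Cauchy estimate holding uniformly in $t \geq \tau$; this parallels the argument used for the homogeneous case in Lemma \ref{unif-sure}. First I would fix $L < M$ and use the representation \eqref{VI} together with Parseval's formula to write
\[
\bE \left\| V_L^I(t) - V_M^I(t) \right\|^2_{L_2(\bS^2)} = \bE\left[ \sum_{\ell=L+1}^M \sum_{m=-\ell}^{\ell} \left| \int_\tau^t (-\lambda_\ell) r^{\alpha-1} E_{\alpha,\alpha}(-\lambda_\ell r^\alpha)\, d\widehat{W}_{\ell,m,\tau}(r) \right|^2 \right].
\]
Applying It\^o's isometry to each summand (exactly as in the proof of Proposition \ref{Prop1}, where $\bE[|\int_\tau^t g\, d\widehat{W}_{\ell,m,\tau}|^2] = \calA_\ell \int_\tau^t |g|^2$) and summing over $m$ gives
\[
\bE \left\| V_L^I(t) - V_M^I(t) \right\|^2_{L_2(\bS^2)} = \sum_{\ell=L+1}^M (2\ell+1)\calA_\ell \int_\tau^t \lambda_\ell^2 r^{2\alpha-2} \big(E_{\alpha,\alpha}(-\lambda_\ell r^\alpha)\big)^2\, dr.
\]

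The crucial step is to bound the integral uniformly in both $\ell$ and $t$. Here I would invoke the Mittag-Leffler bound \eqref{EMitagg}, which yields $E_{\alpha,\alpha}(-\lambda_\ell r^\alpha) \leq C/(1+\lambda_\ell r^\alpha) \leq C \lambda_\ell^{-1} r^{-\alpha}$ for $\ell\geq 1$; the powers of $\lambda_\ell$ then cancel, leaving the $\ell$-free bound $\lambda_\ell^2 r^{2\alpha-2}(E_{\alpha,\alpha}(-\lambda_\ell r^\alpha))^2 \leq C^2 r^{-2}$. Since the lower limit of integration is $\tau > 0$ (the solution $U^I$ vanishes before time $\tau$), there is no singularity from $r^{2\alpha-2}$ at the origin, and $\int_\tau^t r^{-2}\,dr = \tau^{-1} - t^{-1} \leq \tau^{-1}$. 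Consequently
\[
\bE \left\| V_L^I(t) - V_M^I(t) \right\|^2_{L_2(\bS^2)} \leq \frac{C^2}{\tau} \sum_{\ell=L+1}^M (2\ell+1)\calA_\ell,
\]
with a right-hand side independent of $t$.

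Finally, given $\epsilon > 0$, the summability assumption \eqref{eq:condAell} lets me choose $L_0$ (independent of $t$) so that the tail $\sum_{\ell=L+1}^M (2\ell+1)\calA_\ell$ is below $\epsilon \tau / C^2$ for all $M > L \geq L_0$, establishing the uniform Cauchy property and hence convergence in $L_2(\Omega \times \bS^2)$ to a limit, which I identify with $V^I$ from \eqref{eq:defVI} (the $\ell=0$ term vanishes throughout because $\lambda_0 = 0$). The same estimate applied to the full series shows $\sup_{t \geq \tau}\bE\|V^I(t)\|^2_{L_2(\bS^2)} \leq C^2 \tau^{-1} \sum_{\ell=0}^\infty (2\ell+1)\calA_\ell < \infty$, so $V^I$ is well-defined. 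I expect the main obstacle to be the uniform-in-$t$ control of the Mittag-Leffler integral: the delicate point is verifying that the cancellation of $\lambda_\ell$ combined with the integration starting at $\tau > 0$ produces a bound free of both $\ell$ and $t$. The restriction $t \geq \tau$ is essential here, since an integral starting at the origin would instead yield a factor degenerating near $\alpha = 1/2$ or blowing up for small times, as reflected in the estimate \eqref{N-int40}.
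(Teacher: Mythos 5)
Your proposal is correct and follows essentially the same route as the paper: Parseval's formula plus It\^o's isometry to reduce to $\sum_{\ell=L+1}^{M}(2\ell+1)\calA_{\ell}\int_{\tau}^{t}\lambda_{\ell}^2 s^{2\alpha-2}(E_{\alpha,\alpha}(-\lambda_{\ell}s^\alpha))^2\,ds$, the bound \eqref{EMitagg} to cancel the powers of $\lambda_{\ell}$ and obtain the $t$- and $\ell$-free estimate $C^2\tau^{-1}\sum_{\ell=L+1}^{M}(2\ell+1)\calA_{\ell}$, and then the summability condition \eqref{eq:condAell} to conclude the uniform Cauchy property and the well-definedness of $V^I$. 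Your added observations (the vanishing of the $\ell=0$ term and the role of the lower limit $\tau>0$ in avoiding the singularity of $r^{2\alpha-2}$) are accurate refinements of the same argument.
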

\begin{proof}
For $t\geq \tau$, $L<M$, by Parseval's formula and It\^o's isometry, we have 
\begin{align}
	\bE \|V^I_L(t) - V^I_M(t) \|^2_{L_2(\bS^2)}
		&= \bE \left[\sum_{\ell=L+1}^M
		\sum_{m=-\ell}^{\ell} 		\lambda^2_{\ell} \int_{\tau}^{t} (-\lambda_{\ell})s^{\alpha-1} E_{\alpha,\alpha}( -\lambda_{\ell} s^\alpha) d \widehat{W}_{\ell,m,\tau}(s)Y_{\ell,m}
		 \right]\nonumber\\
		&=\sum_{\ell=L+1}^M
		\lambda^2_{\ell}(2\ell+1) \calA_{\ell} \int_{\tau}^{t} s^{2\alpha-2}
		(E_{\alpha,\alpha}(-\lambda_{\ell} s^\alpha))^2 ds
		\text{   by using \eqref{eq:defCell} }\nonumber \\
		&\le C^2 \int_{\tau}^{t} s^{-2}ds
		\sum_{\ell=L+1}^M (2\ell+1) \calA_{\ell}\nonumber \\
  &\le C^2\tau^{-1}\sum_{\ell=L+1}^M (2\ell+1) \calA_{\ell} \label{eq:cauchyCondI},
\end{align}
where the third step used \eqref{EMitagg}.

For a given $\epsilon>0$, using the condition \eqref{eq:condAell}, there is 
	$L_0$ independent of $t$ 
such that for $L,M  \ge L_0$, 
	the RHS of \eqref{eq:cauchyCondI} is smaller than $\epsilon$. So
	$\{V_L^I\}$ is a Cauchy sequence in $L_2(\Omega \times \bS^2)$ and hence it
	is convergent. We define the limit of $V_L^I$ as $L \to \infty$ as in
	\eqref{eq:defVI}. 
Note that 
by the condition \eqref{eq:condAell} we obtain
 \begin{align}\label{VIconv}
   \bE \Big[\|V^I(t)\|^2_{L_2(\bS^2)}\Big]\le C^2 \tau^{-1}
\sum_{\ell=0}^{\infty} (2\ell+1) \calA_{\ell}<\infty,  
 \end{align}
which guarantees that $V^I$ is well-defined in the $L_2(\Omega\times\bS^2)$ sense.
\end{proof}

\begin{rem}\label{RemVI}
Let $T>\tau>0$, then by \eqref{eq:cauchyCondI} we obtain
\[
\int_{\tau}^{T} \bE \Big[\|V_L^I(s)\|^2_{L_2(\bS^2)}\Big]ds\le C^2\tau^{-1} T \sum_{\ell=0}^{\infty} (2\ell+1) \calA_{\ell}<\infty,
\]
which implies, by Lemma \ref{cachu-UI} and the dominated convergence theorem, that $\int_{\tau}^T V_L^{I}(s)ds$ is convergent as $L\to\infty$, 
to $\int_{\tau}^t V^{I}(s)ds$, in the $L_{2}(\Omega\times\bS^2)$ sense.  
\end{rem}
For $L\in\N$, $t>\tau$, we define
\begin{align}\label{VLI}
U_L^I(t):&= \int_{\tau}^t V_L^{I}(s)ds+ 
W^{(L)}_\tau(t),
\end{align}
where 
\begin{align}\label{truncW}
W^{(L)}_\tau(t):=\sum_{\ell=0}^L \sum_{m=-\ell}^{\ell}\widehat{W}_{\ell,m,\tau}(t)Y_{\ell,m},
\end{align}
and $V_L^{I}$ is defined by \eqref{VI}. 

\begin{lem}\label{LemdUI}
Let $W_{\tau}$ be an $L_2(\bS^2)$-valued time-delayed Brownian motion. Let $\{\calA_{\ell}:\ell\in\N_{0}\}$, the angular power spectrum of $W_\tau$, satisfy the condition \eqref{eq:condAell}. 
	For $t\in[\tau,T]$, then $U_{L}^{I}$ is convergent 
 as $L\to\infty$, to $U^I(t):=\int_{\tau}^{t}V^{I}(s)ds+W_{\tau}(t)$,
	in the following sense
	\[
	\sup_{t \in[\tau,T]} \bE\Big[ \left\| U_L^I(t) - U^I(t) \right\|^2_{L_2(\bS^2)} \Big]
	\to 0,\quad  \text{ as } L \to \infty.
	\]
\end{lem}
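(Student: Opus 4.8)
The plan is to split the error into the two natural pieces coming from the definition of $U_L^I$ in \eqref{VLI} and to control each separately using results already in hand. Subtracting the definition of $U^I(t)=\int_{\tau}^{t}V^{I}(s)\,ds+W_{\tau}(t)$ from that of $U_L^I(t)=\int_{\tau}^{t}V_L^{I}(s)\,ds+W^{(L)}_\tau(t)$ gives
\[
U_L^I(t) - U^I(t) = \int_{\tau}^{t}\bigl(V_L^{I}(s) - V^{I}(s)\bigr)\,ds + \bigl(W^{(L)}_\tau(t) - W_{\tau}(t)\bigr),
\]
so that by $\|a+b\|^2\le 2\|a\|^2+2\|b\|^2$ and taking expectations,
\[
\bE\bigl[\|U_L^I(t) - U^I(t)\|^2_{L_2(\bS^2)}\bigr] \le 2\,\bE\Bigl[\Bigl\|\int_{\tau}^{t}(V_L^{I}(s) - V^{I}(s))\,ds\Bigr\|^2_{L_2(\bS^2)}\Bigr] + 2\,\bE\bigl[\|W^{(L)}_\tau(t) - W_{\tau}(t)\|^2_{L_2(\bS^2)}\bigr].
\]

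First I would dispose of the integral term. Applying Minkowski's integral inequality to pull the $L_2(\bS^2)$-norm inside the time integral, followed by the Cauchy--Schwarz inequality in $s$, yields
\[
\Bigl\|\int_{\tau}^{t}(V_L^{I}(s) - V^{I}(s))\,ds\Bigr\|^2_{L_2(\bS^2)} \le (t-\tau)\int_{\tau}^{t}\|V_L^{I}(s) - V^{I}(s)\|^2_{L_2(\bS^2)}\,ds.
\]
Taking expectations and using $t-\tau\le T-\tau$ together with the \emph{uniform} convergence supplied by Lemma~\ref{cachu-UI}, this contribution is bounded by $(T-\tau)^2\,\sup_{s\ge\tau}\bE[\|V_L^{I}(s) - V^{I}(s)\|^2_{L_2(\bS^2)}]$, which tends to $0$ as $L\to\infty$ and carries no dependence on $t$.

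Second, the Brownian tail is $W^{(L)}_\tau(t) - W_{\tau}(t) = -\sum_{\ell=L+1}^{\infty}\sum_{m=-\ell}^{\ell}\widehat{W}_{\ell,m,\tau}(t)Y_{\ell,m}$, which I would treat by Parseval's identity and the orthogonality of the Fourier coefficients, exactly as in the computation of $\Vert W_{\tau}(t)\Vert_{L_{2}(\Omega\times\bS^2)}^2$ following \eqref{Win}. Using \eqref{eq:Btau2} this gives
\[
\bE\bigl[\|W^{(L)}_\tau(t) - W_{\tau}(t)\|^2_{L_2(\bS^2)}\bigr] = (t-\tau)\sum_{\ell=L+1}^{\infty}(2\ell+1)\calA_{\ell} \le (T-\tau)\sum_{\ell=L+1}^{\infty}(2\ell+1)\calA_{\ell},
\]
whose right-hand side is the tail of the convergent series in \eqref{eq:condAell} and hence vanishes as $L\to\infty$, uniformly in $t$.

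The calculations are routine; the only point needing care is that the convergence be \emph{uniform} over $t\in[\tau,T]$. This is secured precisely because Lemma~\ref{cachu-UI} delivers convergence of $V_L^I$ to $V^I$ that is uniform in $s\ge\tau$, and because the Brownian-tail estimate depends on $t$ only through the factor $(t-\tau)\le(T-\tau)$. Both resulting bounds are therefore dominated by quantities independent of $t$ that tend to $0$, so combining them and passing to the supremum over $t\in[\tau,T]$ completes the proof.
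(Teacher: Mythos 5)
Your proof is correct and follows essentially the same route as the paper: the same decomposition of $U_L^I - U^I$ into the $\int_\tau^t(V_L^I - V^I)\,ds$ term (controlled via the uniform convergence in Lemma~\ref{cachu-UI}) and the Brownian tail $W^{(L)}_\tau - W_\tau$ (controlled via Parseval and the trace condition \eqref{eq:condAell}). The only cosmetic difference is that you estimate the distance to the limit directly with Minkowski and Cauchy--Schwarz, whereas the paper argues via a Cauchy-sequence criterion and appeals to Remark~\ref{RemVI}; your handling of the time integral is, if anything, slightly more explicit.
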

\begin{proof}
    For $t\geq \tau$, $L<M$, we have, using \eqref{VLI},
\begin{align}\label{pfduI}
		\bE\Big[ \|U^I_L(t) - U^I_M(t) \|^2_{L_2(\bS^2)}\Big]
		&= \bE \Big[\Big\Vert\int_{\tau}^{t}V_L^{I}(s)ds-\int_{\tau}^{t}V^I_M(s)ds\nonumber\\
  &+ \sum_{\ell=L+1}^M \sum_{m=-\ell}^{\ell} \widehat{W}_{\ell,m,\tau}(s)Y_{\ell,m} \Big\Vert^2_{L_2(\bS^2)}\Big] \nonumber\\
&\le \bE \Big[\Big\Vert\int_{\tau}^{t}V_L^{I}(s)ds-\int_{\tau}^{t}V^I_M(s)ds\Big\Vert^2_{L_2(\bS^2)}\Big]\nonumber\\
&+(t-\tau)
\sum_{\ell=L+1}^M (2\ell+1) \calA_{\ell},
\end{align}
where the second step used Parseval's formula.

By Remark \ref{RemVI}, there is $L_0$ independent of $t$ 
such that for $L,M  \ge L_0$, 
the term 
\[
\bE \Big[\Big\Vert\int_{\tau}^{t}V_L^{I}(s)ds-\int_{\tau}^{t}V^I_M(s)ds\Big\Vert^2_{L_2(\bS^2)}\Big]\leq\epsilon/2,
\]
for $\epsilon>0$. Similarly, using the condition \eqref{eq:condAell}, the second term of \eqref{pfduI} can be smaller than $\epsilon/2$. So
	$\{U_L^I\}$ is a Cauchy sequence in $L_2(\Omega \times \bS^2)$ and hence it
	is convergent.
 Thus the proof is complete.
\end{proof}

\begin{prop}
Let the angular power spectrum $\{\calA_{\ell}:\ell\in\N_{0}\}$ of $W_\tau$ satisfy assumption \eqref{eq:condAell}.
Then the random field $U^{I}(t),\ t>\tau$, defined by \eqref{NewSol2}, satisfies \eqref{pde} in the $L_2(\Omega\times\bS^2)$ sense.
In particular, for $t\in[\tau,T]$, there holds
\begin{align}\label{UIVI}
    \sup_{t \in [\tau,T]} \bE \left\| U^{I}(t)-  \frac{1}{\Gamma(\alpha)}\int_\tau^t\frac{\Delta_{\bS^2} U^{I}(s)}{(t-s)^{1-\alpha}}ds- W_{\tau}(t)\right\|^2_{L_2(\bS^2)} = 0.
\end{align}
\end{prop}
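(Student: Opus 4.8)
The plan is to prove the integral identity first at the truncated level, where every manipulation is legitimate, and then pass to the limit $L\to\infty$ using the convergence results already established. The starting observation is that, for the truncated field, the middle term of \eqref{UIVI} is exactly the primitive of $V_L^I$. Writing the Riemann--Liouville derivative as $D_s^{1-\alpha}=\frac{d}{ds}\,{}_{\tau}D_s^{-\alpha}$ and integrating over $[\tau,t]$ turns $\int_\tau^t V_L^I(s)\,ds=\int_\tau^t D_s^{1-\alpha}\Delta_{\bS^2}U_L^I(s)\,ds$ into the fractional integral of $\Delta_{\bS^2}U_L^I$, the boundary contribution at $s=\tau$ vanishing because $\lim_{r\to0}D_r^{-\alpha}E_\alpha(-\lambda_\ell r^\alpha)=0$, as computed in the proof of Lemma~\ref{frD}. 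Combining this with the definitions \eqref{VLIint} and \eqref{VLI} shows that the truncated residual is \emph{identically} zero, for every $\omega$ and every $t>\tau$:
\[
U_L^I(t)-\frac{1}{\Gamma(\alpha)}\int_{\tau}^{t}\frac{\Delta_{\bS^2}U_L^I(s)}{(t-s)^{1-\alpha}}\,ds-W_\tau^{(L)}(t)=0.
\]

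Next I would decompose the full residual against this truncated identity. Subtracting the (vanishing) truncated residual and using the triangle inequality in $L_2(\bS^2)$ bounds the quantity inside the supremum in \eqref{UIVI} by three pieces: $\|U^I(t)-U_L^I(t)\|_{L_2(\bS^2)}$, the difference of the two fractional-integral terms, and $\|W_\tau(t)-W_\tau^{(L)}(t)\|_{L_2(\bS^2)}$. After taking expectations, the first piece tends to zero uniformly in $t\in[\tau,T]$ by Lemma~\ref{LemdUI}; the second piece, once the full fractional-integral term is understood as the $L_2$-limit of its truncations, equals $\bE\,\|\int_\tau^t (V^I-V_L^I)(s)\,ds\|^2_{L_2(\bS^2)}$ and tends to zero by Remark~\ref{RemVI}; and the third tends to zero by the series representation \eqref{Win} of $W_\tau$ together with \eqref{eq:condAell}. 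Since the left-hand side of \eqref{UIVI} does not depend on $L$ while its norm is dominated by quantities that vanish as $L\to\infty$, the supremum must equal zero. I would also record that the two representations of $U^I$---namely \eqref{NewSol2} and the limit $\int_\tau^t V^I(s)\,ds+W_\tau(t)$ of Lemma~\ref{LemdUI}---coincide, since both are the $L_2(\Omega\times\bS^2)$-limit of the single sequence $U_L^I$; this identification is precisely what makes the residual collapse.

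The step I expect to be the main obstacle is giving the middle term of \eqref{UIVI} a rigorous meaning for the \emph{full} solution and identifying it with $\int_\tau^t V^I(s)\,ds$. The integrand $\Delta_{\bS^2}U^I(s)=\sum_{\ell,m}(-\lambda_\ell)\widehat{U^I}_{\ell,m}(s)Y_{\ell,m}$ carries the unbounded eigenvalue factor $\lambda_\ell$, so the full series cannot be differentiated or integrated term by term; the entire purpose of the truncation-and-limit machinery (Lemma~\ref{cachu-UI} and Remark~\ref{RemVI}) is to define this term as an $L_2$-limit while controlling it uniformly in $t$. Hidden inside the truncated identity is the genuinely technical ingredient, the interchange of the Riemann--Liouville derivative with the It\^o integral supplied by Lemma~\ref{frD}, whose validity rests on the Fubini condition \eqref{con:Fubini} and on the vanishing of $\lim_{r\to0}D_r^{-\alpha}E_\alpha(-\lambda_\ell r^\alpha)$. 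Once these are in place, the remainder is a routine $3\varepsilon$ passage to the limit, uniform in $t\in[\tau,T]$.
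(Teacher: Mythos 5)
Your proposal is correct and follows essentially the same route as the paper: the paper likewise reduces \eqref{UIVI} to Lemma~\ref{LemdUI} (convergence of $U_L^I=\int_\tau^t V_L^I\,ds+W_\tau^{(L)}$ to $\int_\tau^t V^I\,ds+W_\tau$) and then identifies $\int_\tau^t V^I(s)\,ds$ with the fractional integral of $\Delta_{\bS^2}U^I$ via \eqref{VLIint} and \eqref{VIconv}. Your write-up merely makes explicit the truncated identity, the three-term triangle-inequality passage to the limit, and the identification of the two representations of $U^I$, all of which the paper's shorter proof leaves implicit.
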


\begin{proof}
By Lemma \ref{LemdUI} there holds
\[
\sup_{t \in[\tau,T]} \bE \left\| U^I(t)-\int_{\tau}^{t}V^{I}(s)ds-W_{\tau}(t)\right\|^2_{L_2(\bS^2)}=0.
\]
Also, using \eqref{VLIint} we obtain
\[
\int_{\tau}^{t}V^{I}(s)ds=\frac{1}{\Gamma(\alpha)}\int_\tau^t\frac{\Delta_{\bS^2} U^{I}(s)}{(t-s)^{1-\alpha}}ds,
\]
which is convergent in $L_2(\Omega\times\bS^2)$ due to \eqref{VIconv}.
Thus the proof is complete. 
\end{proof}

The following result shows that the inhomogeneous solution $U^{I}(t)$, $t\in(\tau,\infty)$, to the equation (\ref{pde}) is a centered, 2-weakly isotropic Gaussian random field.
	Here we assume that the angular power spectrum  
	$\{\calA_{\ell}:\ell\in\N_{0}\}$ of $W_{\tau}$, decays algebraically with order $\kappa_2>2$, i.e., there exist constants $\widetilde{K},\widetilde{A}>0$ such that
	\begin{align}\label{New-Al}
		\calA_{\ell}\leq\begin{cases} 
			\widetilde{K}, & \ell=0, \\
			\widetilde{A}\ell^{-\kappa_2}, & \ell\geq1,\ \kappa_2>2.
		\end{cases}
	\end{align}
	Let $\widetilde{A}_{\kappa_2}$ and $\gamma_\alpha(\kappa_2)$ be defined as
	\begin{align}\label{Akappa}
		\widetilde{A}_{\kappa_2}:=\bigg(\widetilde{A}\Big(\dfrac{2}{\kappa_2-2}+\dfrac{1}{\kappa_2-1}\Big)\bigg)^{1/2}
	\end{align}
	and
	\begin{align}\label{C1}
		\gamma_\alpha(\kappa_2):=
		\begin{cases} 
			\kappa_2+2, & \alpha\in(0,\frac{1}{2}), \\
			\kappa_2+\frac{2}{\alpha}-2, & \alpha\in(\frac{1}{2},1],\\
			\kappa_2, & \alpha=\frac{1}{2}.
		\end{cases}
	\end{align}
	

	\begin{prop}\label{covUI}
		Let the field $U^{I}$ be defined as in {\rm(\ref{NewSol2})}, 
		with the condition $U^{I}(t)= 0$, $t\in(0,\tau]$. Let $\widehat{W}_{\ell,m,0}$ be the Fourier coefficients of $W_0$ with variances $\{\calA_{\ell}:\ell\in\N_{0}\}$ given in \eqref{New-Al}. For a fixed $t\in(\tau,\infty)$, $U^{I}(t)$ is a centred, $2$-weakly isotropic Gaussian random field on $\bS^2$, and its random coefficients 
		\begin{align}\label{zetalm}
			\widehat{U^{I}}_{\ell,m}(t)=\int_{0}^{t-\tau}E_\alpha(-\lambda_{\ell} (t-\tau-s)^\alpha)d \widehat{W}_{\ell,m,0}(s),
		\end{align}
		satisfy for $\ell,\ell^\prime\in\N_{0}$, $m=-\ell,\dots,\ell$ and $m^\prime=-\ell^\prime,\dots,\ell^\prime$, 
		\begin{align}\label{var-zet}
			\bE \left[ \widehat{U^{I}}_{\ell,m}(t)\overline{ \widehat{U^{I}}_{\ell^\prime,m^\prime}(t)}\right]=\calA_{\ell}\sigma_{\ell,t-\tau,\alpha}^2\delta_{\ell\ell^\prime}\delta_{mm^\prime},
		\end{align}
		where $\lambda_{\ell}$ is given in \eqref{lam}, $\delta_{\ell\ell^\prime}$ is the Kronecker delta function, and $\sigma_{\ell,t,\alpha}^2$ is given by \eqref{var}.
	\end{prop}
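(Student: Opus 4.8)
The plan is to mirror the proof of Proposition~\ref{Prop4} for the homogeneous solution, adapting it to the stochastic-integral representation \eqref{zetalm} of the Fourier coefficients, and to argue in three stages: mean zero, the covariance identity \eqref{var-zet} together with rotational invariance, and finally Gaussianity via characteristic functions. First I would observe that each coefficient $\widehat{U^{I}}_{\ell,m}(t)$ is an It\^o integral of the deterministic, bounded integrand $s\mapsto E_\alpha(-\lambda_\ell(t-\tau-s)^\alpha)$ against the Fourier coefficient $\widehat{W}_{\ell,m,0}$ of $W_0$; since It\^o integrals of deterministic functions have mean zero, $\bE[\widehat{U^I}_{\ell,m}(t)]=0$, and hence $\bE[U^I(\bsx,t)]=0$ for every $\bsx\in\bS^2$.

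The core computation is the covariance identity \eqref{var-zet}. I would substitute the explicit complex form \eqref{WincomplexFourier} of $\widehat{W}_{\ell,m,0}$ (with $\tau$ set to $0$) into \eqref{zetalm}, writing $\calI^{(j)}_{\ell,m}:=\int_0^{t-\tau}E_\alpha(-\lambda_\ell(t-\tau-s)^\alpha)\,d\beta^{(j)}_{\ell,m,0}(s)$, so that for $m\geq 1$ one has $\widehat{U^I}_{\ell,m}(t)=\sqrt{\calA_\ell/2}\,(\calI^{(1)}_{\ell,m}-\mi\,\calI^{(2)}_{\ell,m})$. By Proposition~\ref{PropVar} (with $t$ replaced by $t-\tau$) and the change of variable $r=t-\tau-s$ in \eqref{var}, each $\calI^{(j)}_{\ell,m}$ is $\mathcal{N}(0,\sigma^2_{\ell,t-\tau,\alpha})$-distributed. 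Across distinct degrees the families $\mathcal{B}_{\ell,0}$ and $\mathcal{B}_{\ell',0}$ are independent, producing the factor $\delta_{\ell\ell'}$; for fixed $\ell$ the vanishing for $m\neq m'$ follows from the independence of the real Brownian motions $\beta^{(1)}_{\ell,\cdot,0},\beta^{(2)}_{\ell,\cdot,0}$. The only delicate case is $m'=-m$, where after conjugation the cross term equals $\tfrac{\calA_\ell}{2}(-1)^m\bE[(\calI^{(1)}_{\ell,m}-\mi\,\calI^{(2)}_{\ell,m})^2]$, which cancels because $\calI^{(1)}_{\ell,m}$ and $\calI^{(2)}_{\ell,m}$ are independent with equal variance. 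Feeding \eqref{var-zet} into $\bE[U^I(\bsx,t)U^I(\bsy,t)]=\sum_{\ell,m}\sum_{\ell',m'}\bE[\widehat{U^I}_{\ell,m}(t)\overline{\widehat{U^I}_{\ell',m'}(t)}]\,Y_{\ell,m}(\bsx)\overline{Y_{\ell',m'}(\bsy)}$ and applying the addition theorem \eqref{addition} collapses this to $\sum_{\ell}(2\ell+1)\calA_\ell\sigma^2_{\ell,t-\tau,\alpha}P_\ell(\bsx\cdot\bsy)$, which depends only on $\bsx\cdot\bsy$ and is therefore rotationally invariant; convergence of the series is secured by \eqref{New-Al} together with the bound $\sigma^2_{\ell,t-\tau,\alpha}\leq t-\tau$ from Remark~\ref{rem1}.

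Finally, for Gaussianity I would set $T_\ell(\bsx,t):=\sum_{m=-\ell}^{\ell}\widehat{U^I}_{\ell,m}(t)Y_{\ell,m}(\bsx)$ and use the Hermitian symmetry $\widehat{U^I}_{\ell,-m}=(-1)^m\overline{\widehat{U^I}_{\ell,m}}$, which follows directly from \eqref{WincomplexFourier}, to rewrite $T_\ell$ in the real form $\sqrt{\calA_\ell}\,\calI^{(1)}_{\ell,0}Y_{\ell,0}(\bsx)+\sqrt{2\calA_\ell}\sum_{m=1}^\ell(\calI^{(1)}_{\ell,m}\Re Y_{\ell,m}(\bsx)+\calI^{(2)}_{\ell,m}\Im Y_{\ell,m}(\bsx))$. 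This exhibits $T_\ell$ as a finite linear combination of independent centered Gaussians, hence itself a centered Gaussian of variance $(2\ell+1)\calA_\ell\sigma^2_{\ell,t-\tau,\alpha}$. Since the $T_\ell$ are independent across $\ell$, the partial sums $U^I_N(\bsx,t)=\sum_{\ell=0}^N T_\ell(\bsx,t)$ have characteristic function $\prod_{\ell=0}^N\varphi_{T_\ell}(r)=\exp(-\tfrac12 r^2\sum_{\ell=0}^N(2\ell+1)\calA_\ell\sigma^2_{\ell,t-\tau,\alpha})$; letting $N\to\infty$ and invoking the continuity theorem exactly as in Proposition~\ref{Prop4} identifies the limit as a centered Gaussian, so $U^I(t)$ is Gaussian. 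I expect the main obstacle to be the covariance bookkeeping in the second stage, specifically verifying the off-diagonal cancellations behind $\delta_{mm'}$ from the complex coefficients \eqref{WincomplexFourier} and applying It\^o's isometry to the correct pairing of the real Brownian motions; the remaining steps are routine adaptations of the homogeneous case.
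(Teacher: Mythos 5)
Your proposal is correct and follows essentially the same route as the paper's proof: mean zero from the centred noise, the covariance identity via It\^o's isometry combined with the addition theorem to obtain the rotationally invariant Legendre expansion, and Gaussianity by writing each degree-$\ell$ block as a finite sum of independent centred Gaussians and passing to the limit with characteristic functions and the continuity theorem. The only difference is that you spell out the off-diagonal cancellation (in particular the $m'=-m$ case, where independence and equal variance of $\calI^{(1)}_{\ell,m}$ and $\calI^{(2)}_{\ell,m}$ kill the cross term), which the paper subsumes under ``by the $2$-weak isotropy of $W_0$ and It\^o's isometry''; this is a correct and slightly more explicit verification of the same step.
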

\begin{proof}
Consider the field $U^{I}$ defined by \eqref{NewSol2}. Since $W_0$ is centred we have $\bE[U^{I}(t)]=0$. Let $t\in(\tau,\infty)$ and $\bsx,\bsy\in\bS^2$, then by using \eqref{NewSol2} we write
\begin{align*}
			&\bE \left[U^{I}(\bsx,t)U^{I}(\bsy,t)\right]=\sum_{\ell=0}^\infty\sum_{\ell^\prime=0}^\infty \sum_{m=-\ell}^{\ell}
			\sum_{m\prime=-\ell^\prime}^{\ell^\prime}Y_{\ell,m}(\bsx)\overline{Y_{\ell^\prime,m^\prime}(\bsy)}\\ 
			&\times\bE\left[\left(\int_{0}^{t-\tau} E_\alpha(-\lambda_{\ell} (t-\tau-s)^\alpha)d\widehat{W}_{\ell,m,0}(s)\right)\left(\overline{\int_{0}^{t-\tau}E_\alpha(-\lambda_{\ell^\prime} (t-\tau-v)^\alpha)d\widehat{W}_{\ell^\prime,m^{\prime},0}(v)}\right)\right].
		\end{align*}
		Using It\^{o}’s isometry (see \cite{Kuo2006}) and the addition theorem (see \eqref{addition}) we can write
		\begin{align*}
			\bE\left[U^{I}(\bsx,t)U^{I}(\bsy,t)\right]&=\sum_{\ell=0}^\infty\calA_{\ell}\int_{0}^{t-\tau} (E_\alpha(-\lambda_{\ell} (t-\tau-s)^\alpha))^2ds\sum_{m=-\ell}^{\ell} Y_{\ell,m}(\bsx)\overline{Y_{\ell,m}(\bsy)}\\
			&=\sum_{\ell=0}^\infty(2\ell+1)\calA_\ell P_{\ell}(\bsx\cdot\bsy)\int_{0}^{t-\tau} (E_\alpha(-\lambda_{\ell} (t-\tau-s)^\alpha))^2ds\\
			&= \sum_{\ell=0}^\infty(2\ell+1)\calA_\ell\sigma_{\ell,t-\tau,\alpha}^2 P_{\ell}(\bsx\cdot\bsy),
		\end{align*}
		where $P_{\ell}(\cdot)$, $\ell\in\N_{0}$, is the Legendre polynomial of degree $\ell$. As $P_{\ell}(\bsx\cdot\bsy)$ depends only on the inner product of $\bsx$ and $\bsy$, we conclude that the covariance function $\bE\left[U^{I}(\bsx,t)U^{I}(\bsy,t)\right]$ is rotationally invariant. Note that for $\ell,\ell^\prime\in\N_{0}$, $m=-\ell,\dots,\ell$ and $m^\prime=-\ell^\prime,\dots,\ell^\prime$, by the 2-weak isotropy of $W_0$ and It\^{o}’s isometry, 
		\begin{align*}
			\bE \left[ \widehat{U^{I}}_{\ell,m}(t)\overline{ \widehat{U^{I}}_{\ell^\prime,m^\prime}(t)}\right]=\calA_{\ell}\sigma_{\ell,t-\tau,\alpha}^2\delta_{\ell\ell^\prime}\delta_{mm^\prime}.
		\end{align*}
		To prove that $U^{I}(\bsx,t)$ is Gaussian, we note that its variance can be written, using \eqref{New-Al}, as
		\begin{align}\label{Ga-InHom}
			Var\left[U^{I}(\bsx,t)\right]&=\sum_{\ell=0}^\infty(2\ell+1)\calA_\ell \sigma_{\ell,t-\tau,\alpha}^2\notag\\
			&= \widetilde{K} \sigma_{0,t-\tau,\alpha}^2 +\sum_{\ell=1}^\infty(2\ell+1)\calA_\ell \sigma_{\ell,t-\tau,\alpha}^2\notag\\
			&	\leq (t-\tau)\big(\widetilde{K}+ \sum_{\ell=1}^\infty(2\ell+1)\calA_\ell\big)\notag\\
			&\leq (t-\tau)\big( \widetilde{K}+\widetilde{A}_{\kappa_2}^2\big)<\infty,
		\end{align}
		where $\widetilde{A}_{\kappa_2}$ is given by \eqref{Akappa} and the last step uses \eqref{var} and steps similar to \eqref{Vpf}.
		
		\indent	
Note that $R_{\ell}(t):=\sum_{m=-\ell}^{\ell} \widehat{U^{I}}_{\ell,m}(t)Y_{\ell,m}$ is a centred, Gaussian random variable with mean zero and variance given, using \eqref{var-zet}, by  $(2\ell+1)\calA_{\ell}\sigma_{\ell,t-\tau,\alpha}^2$.
		Define $U^{I}_{N}(t):=\sum_{\ell=0}^{N}R_{\ell}(t)$, $N\geq1$, then by \eqref{Ga-InHom} and similar steps as in \eqref{var-Un} and \eqref{Lim}, with direct application of the continuity theorem (see \cite{Durrett}, Theorem 3.3.6),
		we conclude that the field $U^{I}$ is Gaussian, thus completing the proof. 
	\end{proof}
	
	\begin{rem}
		Let $\{\calA_{\ell}:\ell\in\N_{0}\}$ be the angular power spectrum of $W_{0}$. Then for $t>\tau$ the Fourier coefficients $ \widehat{U^I}_{\ell,m}(t)$ of the field $U^{I}(t)$ can be written, using \eqref{WincomplexFourier} and \eqref{NewSol2}, as
		\begin{align}\label{complexFourier}
			\widehat{U^I}_{\ell,m}(t)&:=
			\begin{cases} 
				\sqrt{\calA_{\ell}}\mathcal{I}_{\ell,0,\alpha}^{(1)}(t-\tau), & m=0,	\\
				\sqrt{\calA_{\ell}/2}
				\Big(\mathcal{I}_{\ell,m,\alpha}^{(1)}(t-\tau)
				- \mi \mathcal{I}_{\ell,m,\alpha}^{(2)}(t-\tau)\Big), & m=1,\ldots,\ell, \\
				(-1)^m\sqrt{\calA_{\ell}/2}
				\Big(\mathcal{I}_{\ell,|m|,\alpha}^{(1)}(t-\tau)
				+ \mi \mathcal{I}_{\ell,|m|,\alpha}^{(2)}(t-\tau)\Big), & m=-\ell,\ldots,-1,
			\end{cases}
		\end{align}
		where $\mathcal{I}_{\ell,m,\alpha}^{(j)}(\cdot)$, $j=1,2$, are given in \eqref{Int}. 
		
		\indent
		Moreover, 
		for $t>\tau$, $U^{I}(t)$ can be expressed, using \eqref{complexFourier}, as
		\begin{align}\label{Sol2}
			U^{I}(t)&=
			\sum_{\ell=0}^\infty
			\sum_{m=-\ell}^\ell \widehat{U^I}_{\ell,m}(t) Y_{\ell,m}\notag\\
			&=\sum_{\ell=0}^{\infty}\Big(\sqrt{\calA_{\ell}}\mathcal{I}_{\ell,0,\alpha}^{(1)}(t-\tau)Y_{\ell,0}\notag\\
			&+\sqrt{2\calA_{\ell}}\sum_{m=1}^{\ell}\Bigl(\mathcal{I}_{\ell,m,\alpha}^{(1)}(t-\tau)\Re Y_{\ell,m}+\mathcal{I}_{\ell,m,\alpha}^{(2)}(t-\tau)\Im Y_{\ell,m}\Big)
			\Big),
		\end{align}
where the above expansion is convergent in $L_2(\Omega\times\bS^2)$.
\end{rem}

	\section{Solution of the time-fractional diffusion equation}\label{FullSec}
	This section demonstrates the solution $U(t)$, $t\in(0,\infty)$, to the time-fractional diffusion equation given in \eqref{System}. 
	
	Let the conditions of Propositions {\rm\ref{Theo1}} 
	be satisfied. 
	Then, the solution $U(t)$, $t\in(0,\infty)$, of equation \eqref{System} is given by
	\begin{align}\label{Exact}
		U(t):=U^{H}(t)+U^{I}(t),\quad t\in(0,\infty),
	\end{align}
	where $U^{H}$ is given by \eqref{New-HomSol}
	and $U^{I}(t)=0$ for $t\leq\tau$, while for $t>\tau$, $U^{I}(t)$ is given by \eqref{Sol2}.

	The following result shows that the solution $U(t)$, $t\in(0,\infty)$, to the equation (\ref{System}) is a centred, 2-weakly isotropic Gaussian random field.
	\begin{prop}\label{Fulliso}
Let the field $U$, given by {\rm(\ref{Exact})}, be the solution to the equation {\rm(\ref{System})}. Then, for a fixed $t\in(0,\infty)$, $U(t)$ is a centred, $2$-weakly isotropic Gaussian random field with random coefficients
		\begin{align}\label{gamlm}
			\widehat{U}_{\ell,m}(t):=
			\begin{cases}
				\widehat{U^{H}}_{\ell,m}(t),&t\leq\tau,\\
				\widehat{U^{H}}_{\ell,m}(t)+ \widehat{U^{I}}_{\ell,m}(t),&t>\tau,
			\end{cases}
		\end{align}
		where $ \widehat{U^{H}}_{\ell,m}(t)$ and $ \widehat{U^{I}}_{\ell,m}(t)$ are given by \eqref{xile} and \eqref{zetalm} respectively. 
		
		\indent
		Moreover, for $\ell,\ell^\prime\in\N_{0}$, $m=-\ell,\dots,\ell$ and $m^\prime=-\ell^\prime,\dots,\ell^\prime$, there holds
		\begin{align*}
			\bE \left[ \widehat{U}_{\ell,m}(t)\overline{ \widehat{U}_{\ell^\prime,m^\prime}(t)}\right]=\delta_{\ell\ell^\prime}\delta_{mm^\prime}
			\begin{cases}
				\calC_\ell (E_\alpha(-\lambda_{\ell} t^\alpha))^2,&t\leq\tau,\\
				\calC_\ell (E_\alpha(-\lambda_{\ell} t^\alpha))^2 +\calA_\ell\sigma_{\ell,t-\tau,\alpha}^2,&t>\tau.
			\end{cases}
		\end{align*}
	\end{prop}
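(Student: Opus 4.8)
The plan is to exploit the decomposition $U = U^H + U^I$ in~\eqref{Exact} together with the two structural results already in hand, namely Proposition~\ref{Prop4} for the homogeneous part and Proposition~\ref{covUI} for the inhomogeneous part, and the standing assumption from the model~\eqref{System} that the initial field $\xi$ is independent of the driving noise $W_\tau$. Since $U^H(t)$ is measurable with respect to the family $\{\widehat{\xi}_{\ell,m}\}$ and $U^I(t)$ with respect to $\{\widehat{W}_{\ell,m,0}\}$, this independence transfers to the two random fields $U^H(t)$ and $U^I(t)$. I would treat the two regimes $t \le \tau$ and $t > \tau$ separately, noting that $U^I(t) = 0$ for $t \le \tau$, so in that regime the entire claim reduces to a restatement of Proposition~\ref{Prop4}.

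First I would establish the coefficient formula~\eqref{gamlm}. Applying the Fourier functional $\langle \cdot, Y_{\ell,m}\rangle_{L_{2}(\bS^2)}$ to~\eqref{Exact} and using its linearity gives $\widehat{U}_{\ell,m}(t) = \widehat{U^H}_{\ell,m}(t) + \widehat{U^I}_{\ell,m}(t)$ for $t > \tau$, and $\widehat{U}_{\ell,m}(t) = \widehat{U^H}_{\ell,m}(t)$ for $t \le \tau$; the explicit expressions~\eqref{xile} and~\eqref{zetalm} then follow from the corresponding propositions. Centering is immediate, since $\bE[U(t)] = \bE[U^H(t)] + \bE[U^I(t)] = 0$.

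For the covariance I would expand
\[
\bE\!\left[\widehat{U}_{\ell,m}(t)\overline{\widehat{U}_{\ell',m'}(t)}\right]
\]
into the four terms coming from the two summands of~\eqref{gamlm}. The two diagonal terms are supplied directly by~\eqref{var-xile} and~\eqref{var-zet}, while the two cross terms factor as products of expectations by the independence of $U^H(t)$ and $U^I(t)$ and then vanish because each field is centered. Adding the two surviving terms yields exactly the stated covariance. The same independence argument applied to the covariance \emph{function} shows that $\bE[U(\bsx,t)U(\bsy,t)] = \bE[U^H(\bsx,t)U^H(\bsy,t)] + \bE[U^I(\bsx,t)U^I(\bsy,t)]$, and since each summand was shown in Propositions~\ref{Prop4} and~\ref{covUI} to be a Legendre series in $\bsx\cdot\bsy$, the sum is rotationally invariant; combined with zero mean this gives $2$-weak isotropy.

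The remaining point, and the only one requiring genuine care, is Gaussianity of the combined field. For any finite set of points $\bsx_1,\dots,\bsx_k \in \bS^2$, the vector $(U(\bsx_1),\dots,U(\bsx_k))$ is the sum of the two \textbf{independent} Gaussian vectors $(U^H(\bsx_j))_j$ and $(U^I(\bsx_j))_j$, which are therefore jointly Gaussian, so their sum is Gaussian; hence $U(t)$ is Gaussian. The main obstacle is thus conceptual rather than computational: one must justify that the independence of $\xi$ and $W_\tau$ at the level of Fourier coefficients genuinely renders $U^H(t)$ and $U^I(t)$ independent as random fields, since this is precisely what allows both cross-covariance terms to drop \emph{and} what upgrades ``each marginally Gaussian'' to the joint Gaussianity needed for the Gaussian-plus-Gaussian conclusion.
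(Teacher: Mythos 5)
Your proposal is correct and follows essentially the same route as the paper: split $U=U^H+U^I$, invoke Propositions~\ref{Prop4} and~\ref{covUI} for the two summands, use the independence (the paper says uncorrelatedness) of $U^H$ and $U^I$ to kill the cross terms in the covariance, and conclude Gaussianity from the sum of the two independent Gaussian fields. Your version is in fact slightly more explicit than the paper's about why independence of $\xi$ and $W_\tau$ transfers to the two solution components and why that is what upgrades marginal to joint Gaussianity.
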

	\begin{proof}
		For $t\leq\tau$ the result follows by Proposition \ref{Prop4}. Let $t>\tau$, then by Propositions \ref{Prop4}, \ref{covUI}, and the
		uncorrelatedness of $U^H$ and $U^I$, we obtain
		\begin{align*}
			\bE\left[U(\bsx,t)U(\bsy,t)\right]&=\bE\left[U^{H}(\bsx,t)U^{H}(\bsy,t)\right]+\bE\left[U^{I}(\bsx,t)U^{I}(\bsy,t)\right]\\
			&= \sum_{\ell=0}^\infty(2\ell+1)\left(\calC_\ell (E_\alpha(-\lambda_{\ell} t^\alpha))^2 +\calA_\ell\sigma_{\ell,t-\tau,\alpha}^2\right)P_{\ell}(\bsx\cdot\bsy).
		\end{align*}
		As $P_{\ell}(\bsx\cdot\bsy)$ depends only on the inner product of $\bsx$ and $\bsy$, we conclude that the covariance function $\bE\left[U^{I}(\bsx,t)U^{I}(\bsy,t)\right]$ is rotationally invariant. Since $t>\tau$,  we note that for $\ell,\ell^\prime\in\N_{0}$, $m=-\ell,\dots,\ell$ and $m^\prime=-\ell^\prime,\dots,\ell^\prime$, by Propositions \ref{Prop4} and \ref{covUI},
		\begin{align*}
			\bE \left[ \widehat{U}_{\ell,m}(t)\overline{ \widehat{U}_{\ell^\prime,m^\prime}(t)}\right]=\big((E_\alpha(-\lambda_{\ell} t^\alpha))^2 \calC_{\ell}+\calA_{\ell}\sigma_{\ell,t-\tau,\alpha}^2\big)\delta_{\ell\ell^\prime}\delta_{mm^\prime}.
		\end{align*}
		Now using the results of Propositions \ref{covUI} and \ref{Prop4}, we conclude that the field $U$ is Gaussian, thus completing the proof.
	\end{proof}
	
	\begin{rem}\label{rem5}
		The solution $U$ given by \eqref{Exact}, can be expressed, using \eqref{New-HomSol} and \eqref{Sol2}, as 
		\begin{align*}
			U(t)= \sum_{\ell=0}^{\infty}\big(U^{H}_{\ell}(t)+U^{I}_{\ell}(t)\big),\quad t\in(0,\infty),
		\end{align*}
which is convergent in $L_2(\Omega\times\bS^2)$, where $U^{H}_{\ell}(t)$ is defined as 
		\begin{align}\label{tru-HomSol}
			U^{H}_{\ell}(t)&=E_\alpha(-\lambda_{\ell} t^\alpha)\Big( \sqrt{\calC_{\ell}}Z_{\ell,0}^{(1)}Y_{\ell,0}\notag\\
			&+\sqrt{2\calC_{\ell}}\sum_{m=1}^{\ell}\Bigl(Z_{\ell,m}^{(1)}\Re Y_{\ell,m}+Z_{\ell,m}^{(2)}\Im Y_{\ell,m}\Big)\Big),
		\end{align}
		and $U^{I}_{\ell}(t)=0$ for $t\leq\tau$, while for $t>\tau$,
		\begin{align}\label{tru-inho}
			U^{I}_{\ell}(t)&= \sqrt{\calA_{\ell}}\mathcal{I}_{\ell,0,\alpha}^{(1)}(t-\tau)Y_{\ell,0}\notag\\
			&+\sqrt{2\calA_{\ell}}\sum_{m=1}^{\ell}\Bigl(\mathcal{I}_{\ell,m,\alpha}^{(1)}(t-\tau)\Re Y_{\ell,m}+\mathcal{I}_{\ell,m,\alpha}^{(2)}(t-\tau)\Im Y_{\ell,m}
			\Big),
		\end{align}
		where $\mathcal{I}_{\ell,m,\alpha}^{(j)}(\cdot)$, $j=1,2$, are given in \eqref{Int}. 
	\end{rem}
	
	\section{Approximation to the solution}\label{Sec6}
	
	The results in the previous sections give a series representation of the solution $U(t)$, $t\in(0,\infty)$, of the time-fractional diffusion equation \eqref{System}. This section provides an approximation to the solution $U(t)$ by truncating its expansion at a degree 
	(truncation level) $L\in\N$. Then we give an upper bound for the approximation error of $U_{L}$. 
	
	\begin{defin}
		The approximation $U_{L}$ of degree $L\in\N$ to the solution $U$ given in {\rm(\ref{Exact})} is defined as
		\begin{align}\label{Approx}
			U_{L}(t):&= U_L^H(t)+U_L^I(t)\notag\\
			&= \sum_{\ell=0}^{L}\Big(U^{H}_{\ell}(t)+U^{I}_{\ell}(t)\Big),\quad t\in(0,\infty),
		\end{align}
		where $U^{H}_{\ell}(t)$ is given by \eqref{tru-HomSol}
		and  for $t\leq\tau$, $U^{I}_{\ell}(t)=0$, while for $t>\tau$, $U^{I}_{\ell}(t)$ is given by \eqref{tru-inho}.
	\end{defin}
	\begin{prop}\label{theo3}
		Let $\{\calC_{\ell}:\ell\in\N_{0}\}$, the angular power spectrum of $ \xi$, satisfy \eqref{New-Cl}. Let $Q^{H}_{L}(t)$, $t\in(0,\infty)$, be defined as
		\begin{align*}
			Q^{H}_{L}(t):&=\Big\Vert U^H(t)-U_L^H(t) \Big\Vert_{L_{2}(\Omega\times\bS^2)}=	\Big\Vert \sum_{\ell=L+1}^{\infty}U^{H}_{\ell}(t)\Big\Vert_{L_{2}(\Omega\times\bS^2)},
		\end{align*} 
		where $U^{H}_{\ell}$ is given in \eqref{tru-HomSol}. Then, the following estimates hold true:
		\begin{enumerate}[\rm a)]
			\item\label{itm:AB} for $0< t\leq  \lambda_{L}^{-\frac{1}{\alpha}}$, $L\geq\ell_0$,
			\begin{align*}
				Q^{H}_{L}(t)\leq \widetilde{C}_{\kappa_1} L^{-(\kappa_1-2)/2},
			\end{align*}
			where $\widetilde{C}_{\kappa_1}$ is given by \eqref{Ckappa}.
			\item\label{itm:AC1} for $t> \lambda_{L}^{-\frac{1}{\alpha}}$, $L\geq\ell_0$,
			\begin{align*}
				Q^{H}_{L}(t)\leq \psi^{H}_{\alpha}(t)\widetilde{C}_{\kappa_1}  L^{-(2+\kappa_1)/2},
			\end{align*}
		\end{enumerate}
		where $\psi^{H}_{\alpha}(\cdot)$ is defined as
		\begin{align}\label{C1t}
			\psi^{H}_{\alpha}(t):=	\Gamma(1+\alpha)t^{-\alpha}\quad \alpha\in(0,1].
		\end{align}
	\end{prop}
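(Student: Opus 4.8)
The plan is to reduce everything to Parseval's identity on $L_2(\Omega\times\bS^2)$ and then split into the two regimes according to which bound on the Mittag-Leffler function is sharper. First I would exploit the mutual orthogonality in $L_2(\bS^2)$ of the summands $U^H_\ell(t)$ of distinct degrees together with the second-moment relation \eqref{var-xile}. Since $\bE\big[|\widehat{U^H}_{\ell,m}(t)|^2\big]=(E_\alpha(-\lambda_\ell t^\alpha))^2\calC_\ell$ and the $Y_{\ell,m}$ are orthonormal, the cross-degree terms vanish pathwise and Parseval's formula yields, after taking expectations,
\[
\big(Q^{H}_{L}(t)\big)^2=\sum_{\ell=L+1}^{\infty}(2\ell+1)\,\big(E_\alpha(-\lambda_\ell t^\alpha)\big)^2\,\calC_\ell .
\]
This is the single identity on which both estimates rest, so I would establish it once at the outset.

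For part \ref{itm:AB}, in the range $0<t\le\lambda_L^{-1/\alpha}$ I would simply discard the Mittag-Leffler factor via $0<E_\alpha(-\lambda_\ell t^\alpha)\le1$ from Remark \ref{rem1}, leaving $\big(Q^{H}_{L}(t)\big)^2\le\sum_{\ell=L+1}^\infty(2\ell+1)\calC_\ell$. Inserting the algebraic decay \eqref{New-Cl} (valid for $\ell\ge1$, which is the role of $L\ge\ell_0$) and comparing the tail sum with $\int_L^\infty(2x^{1-\kappa_1}+x^{-\kappa_1})\,dx$ exactly as in \eqref{Vpf} gives $\sum_{\ell=L+1}^\infty(2\ell+1)\calC_\ell\le\widetilde{C}_{\kappa_1}^2\,L^{-(\kappa_1-2)}$, with $\widetilde{C}_{\kappa_1}$ as in \eqref{Ckappa}. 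Taking square roots produces the claim.

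For part \ref{itm:AC1}, in the range $t>\lambda_L^{-1/\alpha}$ I would instead keep the decay of the Mittag-Leffler function. Applying \eqref{N-Simon} with the substitution $s=\lambda_\ell^{1/\alpha}t$ gives $E_\alpha(-\lambda_\ell t^\alpha)\le\Gamma(1+\alpha)\lambda_\ell^{-1}t^{-\alpha}$, hence $\big(E_\alpha(-\lambda_\ell t^\alpha)\big)^2\le(\Gamma(1+\alpha))^2\,t^{-2\alpha}\,\lambda_\ell^{-2}$. Since $\lambda_\ell=\ell(\ell+1)\ge\ell^2$ is increasing, for every $\ell\ge L+1$ one has $\lambda_\ell^{-2}\le\lambda_{L+1}^{-2}\le L^{-4}$, so this factor may be pulled out of the sum uniformly. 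What remains is exactly the tail $\sum_{\ell=L+1}^\infty(2\ell+1)\calC_\ell$ already bounded in part \ref{itm:AB} by $\widetilde{C}_{\kappa_1}^2L^{-(\kappa_1-2)}$. Collecting factors gives $\big(Q^{H}_{L}(t)\big)^2\le(\Gamma(1+\alpha))^2t^{-2\alpha}\widetilde{C}_{\kappa_1}^2L^{-(2+\kappa_1)}$, and a square root together with the definition \eqref{C1t} of $\psi^H_\alpha$ delivers the stated bound.

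The integral comparison and the square-root step are routine; the one place needing care is part \ref{itm:AC1}, where the estimate must be organised so that the constant comes out exactly as $\widetilde{C}_{\kappa_1}$. The clean route, which I would follow, is to factor the uniform bound $\lambda_\ell^{-2}\le L^{-4}$ out first and then reuse the part-\ref{itm:AB} tail bound verbatim, rather than summing $(2\ell+1)\ell^{-(\kappa_1+4)}$ directly (which would instead yield the constants $2/(\kappa_1+2)+1/(\kappa_1+3)$). The main obstacle is therefore bookkeeping: casting the Mittag-Leffler decay into the precise form $\lambda_\ell^{-2}t^{-2\alpha}$ through the correct substitution, and factoring $L^{-4}$ uniformly over the tail so that both parts share one constant.
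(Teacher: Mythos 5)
Your proposal is correct and follows essentially the same route as the paper: the same Parseval identity $\big(Q^H_L(t)\big)^2=\sum_{\ell>L}(2\ell+1)(E_\alpha(-\lambda_\ell t^\alpha))^2\calC_\ell$, the bound $E_\alpha\le 1$ plus the integral comparison for part a), and the estimate $E_\alpha(-\lambda_\ell t^\alpha)\le\Gamma(1+\alpha)\lambda_\ell^{-1}t^{-\alpha}$ from \eqref{N-Simon} combined with $\lambda^{-2}\le L^{-4}$ for part b). The only cosmetic difference is that the paper first pulls out $(E_\alpha(-\lambda_L t^\alpha))^2$ by monotonicity in $\ell$ and then applies \eqref{N-Simon} once, whereas you apply \eqref{N-Simon} termwise and then factor out $\lambda_{L+1}^{-2}$; both yield the identical constant.
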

	
	\begin{proof}
		By \eqref{tru-HomSol} we can write
		\begin{align*}
			(Q^{H}_{L}(t))^2&=\bE\Bigg[\Big\Vert\sum_{\ell=L+1}^{\infty}\Big[ E_\alpha(-\lambda_{\ell} t^\alpha)\Bigl( \sqrt{\calC_{\ell}}Z_{\ell,0}^{(1)}Y_{\ell,0}\\
			&+\sqrt{2\calC_{\ell}}\sum_{m=1}^{\ell}\Bigl(Z_{\ell,m}^{(1)}\Re Y_{\ell,m}+Z_{\ell,m}^{(2)}\Im Y_{\ell,m}\Big)\Big)\Big]\Big\Vert_{L_{2}(\bS^2)}^2\Bigg].
		\end{align*}
		Since $Z_{\ell,m_1}^{(1)},Z_{\ell,m_2}^{(2)}\in\mathcal{Z}_{\ell}, \ell\in\N_{0}$, $m_1=0,1,\dots,\ell$, $m_2=1,\dots,\ell$, we have
		\begin{align*}
			(Q^{H}_{L}(t))^2&=\sum_{\ell=L+1}^{\infty}\Big[ (E_\alpha(-\lambda_{\ell} t^\alpha))^2\Bigl( \calC_{\ell} \bE \left[\big(Z_{\ell,0}^{(1)}\big)^2\right]\Vert Y_{\ell,0}\Vert_{L_{2}(\bS^2)}^2\notag\\
			&+2\calC_{\ell}\sum_{m=1}^{\ell}\Bigl(\bE \left[\big(Z_{\ell,m}^{(1)}\big)^2\right]\Vert \Re Y_{\ell,m}\Vert_{L_{2}(\bS^2)}^2+\bE \left[\big(Z_{\ell,m}^{(2)}\big)^2\right]\Vert \Im Y_{\ell,m}\Vert_{L_{2}(\bS^2)}^2\Big)\Big)\Big]\notag\\
			&=\sum_{\ell=L+1}^{\infty}(E_\alpha(-\lambda_{\ell} t^\alpha))^2\Big[ \calC_{\ell}\Vert Y_{\ell,0}\Vert_{L_{2}(\bS^2)}^2\notag\\
			&+2\calC_{\ell} \sum_{m=1}^{\ell}\left(\Vert \Re Y_{\ell,m}\Vert_{L_{2}(\bS^2)}^2+\Vert \Im Y_{\ell,m}\Vert_{L_{2}(\bS^2)}^2\right)	\Big].
		\end{align*}
		By the properties of the spherical harmonics we have
		\begin{align}\label{Ylm}
			\Vert Y_{\ell,0}\Vert_{L_{2}(\bS^2)}^2+2\sum_{m=1}^{\ell}\Big(\Vert \Re Y_{\ell,m}\Vert_{L_{2}(\bS^2)}^2+&\Vert \Im Y_{\ell,m}\Vert_{L_{2}(\bS^2)}^2\Big)=\sum_{m=-\ell}^{\ell}\Vert Y_{\ell,m}\Vert_{L_{2}(\bS^2)}^2\notag\\
			&=(2\ell+1)P_{\ell}(1)=(2\ell+1),
		\end{align}
		and hence
		\begin{align}\label{U1P_1}
			(Q^{H}_{L}(t))^2=\sum_{\ell=L+1}^{\infty}(E_\alpha(-\lambda_{\ell} t^\alpha))^2 (2\ell+1)\calC_{\ell}.
		\end{align}
		Now, assume $0< t\leq \lambda_{L}^{-\frac{1}{\alpha}}$ for $L\geq\ell_0$, then since $E_{\alpha}(-s^\alpha)\leq1$, equation \eqref{U1P_1} can be bounded by
		\begin{align}\label{N-up}
			(Q^{H}_{L}(t))^2&\leq \sum_{\ell=L+1}^{\infty} (2\ell+1)\calC_{\ell}\notag\\
			&\leq \widetilde{C} \sum_{\ell=L+1}^{\infty}(2\ell+1)\ell^{-\kappa_1}\notag\\
			&\leq \widetilde{C}\int_{L}^{\infty}(2x+1)x^{-\kappa_1}dx\notag\\
			&= \widetilde{C}\bigg(\dfrac{2}{\kappa_1-2}+\dfrac{L^{-1}}{\kappa_1-1}\bigg)L^{2-\kappa_1}\notag\\
			&\leq \widetilde{C}\bigg(\dfrac{2}{\kappa_1-2}+\dfrac{1}{\kappa_1-1}\bigg)L^{2-\kappa_1}.
		\end{align}
		Now, let us consider part \ref{itm:AC1}. Since $E_{\alpha}(-\lambda_{\ell}t^\alpha)$ is positive and decreasing for $t>0$,
		then for $t> \lambda_{L}^{-\frac{1}{\alpha}}$, using \eqref{N-Simon}, \eqref{U1P_1} can be bounded by
		\begin{align}\label{NewI1}
			(Q^{H}_{L}(t))^2&\leq(E_\alpha(-\lambda_{L} t^\alpha))^2\sum_{\ell=L+1}^{\infty} (2\ell+1)\calC_{\ell}\notag\\
			&\leq (\Gamma(1+\alpha))^2t^{-2\alpha}\lambda_{L}^{-2}\sum_{\ell=L+1}^{\infty} (2\ell+1)\calC_{\ell}\notag\\
			&\leq \big(\psi^{H}_{\alpha}(t)\big)^2\lambda_{L}^{-2}\widetilde{C}_{\kappa_1}^2L^{2-\kappa_1},
		\end{align}
		thus completing the proof.
	\end{proof}
	
	In the next proposition, we derive an upper bound for the approximation errors of the truncated solution $U_{L}^I(t)$, $t>\tau$.
	
	Let $\psi^{I}_{\alpha}(t)$, $t>0$, be defined as
	\begin{align}\label{int41}
		\psi^{I}_{\alpha}(t):=
		\begin{cases} 
			\sqrt{1+M_{\alpha}t^{1-2\alpha}}, & \alpha\in(0,\frac{1}{2}), \\
			\sqrt{1+M_{\alpha}}, & \alpha\in(\frac{1}{2},1],\\
			K(t), & \alpha=\frac{1}{2},
		\end{cases}
	\end{align}
	where $M_{\alpha}$ is given by \eqref{C2} and 
	\begin{align}
		K(t):= \begin{cases} 
			\sqrt{1+M_{\frac{1}{2}}(2+\ln(t))}, & t>1, \\
			\sqrt{1+2M_{\frac{1}{2}}}, & t\leq1.
		\end{cases}
	\end{align}

	\begin{prop}\label{theo4}
		Let $\{\calA_{\ell}:\ell\in\N_{0}\}$, the angular power spectrum of $W_\tau$, satisfy \eqref{New-Al}. Let $Q^{I}_{L}(t)$, $t>\tau$, be defined as
		\begin{align}
			Q^{I}_{L}(t):&=\Big\Vert U^I(t)-U_L^I(t) \Big\Vert_{L_{2}(\Omega\times\bS^2)}=	\Big\Vert \sum_{\ell=L+1}^{\infty}U^{I}_{\ell}(t)\Big\Vert_{L_{2}(\Omega\times\bS^2)},
		\end{align}
		where for $t>\tau$, $U^{I}_{\ell}(t)$ is given by \eqref{tru-inho}. Then, the following estimates hold true:
		\begin{enumerate}[\rm i)]
			\item\label{itm:B1} for $\tau< t\leq\tau+ \lambda_{L}^{-\frac{1}{\alpha}}$, $L\geq\ell_0$,
			\begin{align*}
				Q^{I}_{L}(t)\leq \widetilde{A}_{\kappa_2}L^{-\frac{(\kappa_2+\frac{2}{\alpha}-2)}{2}},
			\end{align*}
			where $\widetilde{A}_{\kappa_2}$ is given by \eqref{Akappa}. 
			\item\label{itm:B2} For $t>\tau+ \lambda_{L}^{-\frac{1}{\alpha}}$, $L\geq\ell_0$, 
			\begin{align*}
				Q^{I}_{L}(t)\leq \psi^{I}_{\alpha}(t-\tau)\widetilde{A}_{\kappa_2}L^{-\frac{\gamma_\alpha(\kappa_2)}{2}},
			\end{align*}
			where $\gamma_\alpha(\kappa_2)$ is given in \eqref{C1} and $\psi^{I}_{\alpha}(\cdot)$ is given in \eqref{int41}.
		\end{enumerate}
	\end{prop}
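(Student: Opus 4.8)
The plan is to follow the template of the proof of Proposition~\ref{theo3}, with the Mittag--Leffler prefactor replaced by the variance $\sigma_{\ell,t-\tau,\alpha}^2$. First I would compute $(Q^{I}_{L}(t))^2$ explicitly. Writing $U^{I}(t)-U_L^I(t)=\sum_{\ell=L+1}^{\infty}U^I_\ell(t)$ with $U_\ell^I$ as in \eqref{tru-inho}, the random coefficients are built from the independent stochastic integrals $\mathcal{I}_{\ell,m,\alpha}^{(j)}(t-\tau)$, each of which has mean zero and variance $\sigma_{\ell,t-\tau,\alpha}^2$ by Proposition~\ref{PropVar}. Applying Parseval's identity, It\^o's isometry, and the spherical-harmonic identity \eqref{Ylm} exactly as in the derivation of \eqref{U1P_1}, I expect to obtain
\begin{align*}
(Q^{I}_{L}(t))^2=\sum_{\ell=L+1}^{\infty}(2\ell+1)\,\calA_{\ell}\,\sigma_{\ell,t-\tau,\alpha}^2 .
\end{align*}
This is the analogue of \eqref{U1P_1}, with $\calC_\ell(E_\alpha(-\lambda_\ell t^\alpha))^2$ replaced by $\calA_\ell\sigma_{\ell,t-\tau,\alpha}^2$, and it reduces the whole proposition to estimating this single series.

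For part~\ref{itm:B1}, in the regime $\tau<t\le\tau+\lambda_L^{-1/\alpha}$ I would use the crude bound $\sigma_{\ell,t-\tau,\alpha}^2\le t-\tau\le\lambda_L^{-1/\alpha}$ from Remark~\ref{rem1}, pull this factor out of the sum, and bound the remaining $\sum_{\ell=L+1}^{\infty}(2\ell+1)\calA_\ell$ by $\widetilde{A}_{\kappa_2}^2 L^{2-\kappa_2}$ via the same integral comparison used in \eqref{N-up} (with \eqref{New-Al} and \eqref{Akappa}). Since $\lambda_L=L(L+1)\ge L^2$ gives $\lambda_L^{-1/\alpha}\le L^{-2/\alpha}$, this produces $(Q^I_L(t))^2\le\widetilde{A}_{\kappa_2}^2 L^{2-\kappa_2-2/\alpha}$, i.e. the claimed rate $L^{-(\kappa_2+2/\alpha-2)/2}$ after taking square roots.

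For part~\ref{itm:B2}, with $t>\tau+\lambda_L^{-1/\alpha}$, I would exploit that $\sigma_{\ell,t-\tau,\alpha}^2$ is decreasing in $\ell$ (the integrand $(E_\alpha(-\lambda_\ell r^\alpha))^2$ decreases as $\lambda_\ell$ grows, by Remark~\ref{rem1}), so that $\sigma_{\ell,t-\tau,\alpha}^2\le\sigma_{L,t-\tau,\alpha}^2$ for $\ell\ge L+1$. Factoring $\sigma_{L,t-\tau,\alpha}^2$ out of the series and again bounding $\sum_{\ell=L+1}^{\infty}(2\ell+1)\calA_\ell\le\widetilde{A}_{\kappa_2}^2 L^{2-\kappa_2}$ leaves me to estimate $\sigma_{L,t-\tau,\alpha}^2$ through \eqref{N-int40}; the hypothesis $t-\tau>\lambda_L^{-1/\alpha}$ is exactly what guarantees $\lambda_L^{1/\alpha}(t-\tau)>1$ so that \eqref{N-int40} applies. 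In the cases $\alpha\in(0,\tfrac12)$ and $\alpha\in(\tfrac12,1]$ the estimate \eqref{N-int40} factors as $\psi^{I}_\alpha(t-\tau)^2\lambda_L^{-2}$ and $\psi^{I}_\alpha(t-\tau)^2\lambda_L^{-1/\alpha}$ respectively (using $\lambda_L^{-1/\alpha}\le\lambda_L^{-2}$ when $\alpha<\tfrac12$ and the definition \eqref{int41}), and combining with $\lambda_L\ge L^2$ yields precisely the exponents $\gamma_\alpha(\kappa_2)$ of \eqref{C1}.

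The delicate case, which I expect to be the main obstacle, is $\alpha=\tfrac12$, where \eqref{N-int40} carries the logarithmic factor $\lambda_L^{-2}\bigl(1+M_{1/2}\ln(\lambda_L^2(t-\tau))\bigr)$ whose argument grows with $L$. Here the key simplification is the identity $\lambda_L^{-2}L^{2}=(L+1)^{-2}\le1$, which absorbs the two extra powers of $L$ needed to pass from $L^{2-\kappa_2}$ to $L^{-\kappa_2}$; it then remains to verify $(L+1)^{-2}\bigl(1+M_{1/2}\ln(\lambda_L^2(t-\tau))\bigr)\le K(t-\tau)^2$. Splitting $\ln(\lambda_L^2(t-\tau))=2\ln(L(L+1))+\ln(t-\tau)$ and using the elementary bounds $2(L+1)^{-2}\ln(L(L+1))\le2$ and $(L+1)^{-2}\ln(t-\tau)\le\ln(t-\tau)$ for $t-\tau>1$, together with the separate sub-case $t-\tau\le1$ (where $\ln(t-\tau)\le0$ is simply discarded), I expect to recover exactly the constant $K(t-\tau)$ from \eqref{int41} and the rate $L^{-\kappa_2/2}$, completing the proof.
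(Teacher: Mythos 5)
Your proposal is correct and follows essentially the same route as the paper: reduce $(Q^I_L(t))^2$ to the series $\sum_{\ell>L}(2\ell+1)\calA_\ell\sigma^2_{\ell,t-\tau,\alpha}$ via the independence of the $\mathcal{I}^{(j)}_{\ell,m,\alpha}$ and \eqref{Ylm}, bound $\sigma^2_{L,t-\tau,\alpha}$ by $\lambda_L^{-1/\alpha}$ in the small-time regime and by \eqref{N-int40} otherwise, and compare the tail sum with an integral using \eqref{New-Al}. Your explicit verification of the $\alpha=\tfrac12$ logarithmic case is in fact more detailed than what the paper records.
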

	\begin{proof}
		By \eqref{tru-inho} the term $(Q^{I}_{L}(t))^2$ can be written, for $t>\tau$, as
		\begin{align*}
			(Q^{I}_{L}(t))^2&=\Big\Vert \sum_{\ell=L+1}^{\infty}U_{\ell}^{I}(t)\Big\Vert_{L_{2}(\Omega\times\bS^2)}^2=\Big\Vert  \sum_{\ell=L+1}^{\infty}\Big[\sqrt{\calA_{\ell}}\mathcal{I}_{\ell,0,\alpha}^{(1)}(t-\tau)Y_{\ell,0}\\
			&+\sqrt{2\calA_{\ell}}\sum_{m=1}^{\ell}\Bigl(\mathcal{I}_{\ell,m,\alpha}^{(1)}(t-\tau)\Re Y_{\ell,m}+\mathcal{I}_{\ell,m,\alpha}^{(2)}(t-\tau)\Im Y_{\ell,m}\Big)\Big]\Big\Vert_{L_{2}(\Omega\times\bS^2)}^2.
		\end{align*}
		By Proposition \ref{PropVar}, for $\ell\in\N_{0}, m_1=0,\dots,\ell,m_2=1,\dots,\ell$, 
  we know that $\{\big(\mathcal{I}_{\ell,m_1,\alpha}^{(1)}(\cdot),\mathcal{I}_{\ell,m_2,\alpha}^{(2)}(\cdot)\big)\}$ is a sequence of independent, 
  real-valued Gaussian random variables with mean zero and variances $\sigma_{\ell,t-\tau,\alpha}^2$, $t>\tau$. It follows that
		\begin{align}\label{upQ}
			(Q^{I}_{L}(t))^2&=\sum_{\ell=L+1}^{\infty}\Big[ \Bigl( \calA_{\ell} \bE \left[\big(\calI_{\ell,0,\alpha}^{(1)}(t-\tau)\big)^2\right]\Vert Y_{\ell,0}\Vert_{L_{2}(\bS^2)}^2\notag\\
			&+2\calA_{\ell}\sum_{m=1}^{\ell}\Bigl(\bE \Big[\big(\calI_{\ell,m,\alpha}^{(1)}(t-\tau)\big)^2\Big]\Vert \Re Y_{\ell,m}\Vert_{L_{2}(\bS^2)}^2\notag\\&\qquad\qquad\qquad+\bE \Big[\big(\calI_{\ell,m,\alpha}^{(2)}(t-\tau)\big)^2\Big]\Vert \Im Y_{\ell,m}\Vert_{L_{2}(\bS^2)}^2\Big)\Big)\Big]\notag\\
			&=\sum_{\ell=L+1}^{\infty}\sigma_{\ell,t-\tau,\alpha}^2\Big[ \calA_{\ell}\Vert Y_{\ell,0}\Vert_{L_{2}(\bS^2)}^2\notag\\
			&+2\calA_{\ell} \sum_{m=1}^{\ell}\left(\Vert \Re Y_{\ell,m}\Vert_{L_{2}(\bS^2)}^2+\Vert \Im Y_{\ell,m}\Vert_{L_{2}(\bS^2)}^2\right)	\Big]\notag\\
			&= \sum_{\ell=L+1}^{\infty} \calA_{\ell}(2\ell+1)\sigma_{\ell,t-\tau,\alpha}^2\notag\\
			&\leq \sigma_{L,t-\tau,\alpha}^2\sum_{\ell=L+1}^{\infty} \calA_{\ell}(2\ell+1),
		\end{align}
		since by \eqref{var} $\sigma_{\ell,t,\alpha}^2$ is decreasing in $\ell$.
		
		\indent
		Now let us consider part \ref{itm:B1}. Since $0< (t-\tau)\lambda_{L}^{\frac{1}{\alpha}}\leq1$, then by \eqref{N-Sig}, $\sigma_{L,t-\tau,\alpha}^2$ is bounded by $\lambda_{L}^{-\frac{1}{\alpha}}$. Similar to \eqref{N-up}, the upper bound \eqref{upQ} can be bounded, for $\tau< t\leq \tau+ \lambda_{L}^{-\frac{1}{\alpha}}$, $L\geq\ell_0$, by 
		\begin{align*}
			(Q^{I}_{L}(t))^2&\leq \lambda_{L}^{-\frac{1}{\alpha}}\sum_{\ell=L+1}^{\infty} \calA_{\ell}(2\ell+1)
			\leq \widetilde{A}_{\kappa_2}^2 L^{2-(\kappa_2+\frac{2}{\alpha})},
		\end{align*} 
		which completes the proof of \ref{itm:B1}. 
		
		\indent
		To prove \ref{itm:B2}, note that by using the upper bound \eqref{N-Sig} with the estimates \eqref{N-int37}, \eqref{N-int38}, and \eqref{N-int39},  we obtain  
		\begin{align}\label{int40}
			\sigma_{L,t-\tau,\alpha}^2\leq \begin{cases} 
				\lambda_{L}^{-\frac{1}{\alpha}}+M_{\alpha}(t-\tau)^{1-2\alpha}\lambda_{L}^{-2}, & \alpha\in(0,\frac{1}{2}), \\
				\lambda_{L}^{-\frac{1}{\alpha}}(1+M_{\alpha}), & \alpha\in(\frac{1}{2},1],\\
				\lambda_{L}^{-2}	(1+M_{\frac{1}{2}}\ln(\lambda_{L}^2(t-\tau))), & \alpha=\frac{1}{2},
			\end{cases}
		\end{align}	
		where $M_\alpha$ is given in \eqref{C2}.
		
		\indent	
		By using \eqref{upQ} with the estimate \eqref{int40} and the properties of $\calA_\ell$ given in \eqref{New-Al}, we get 
		\begin{align}\label{UpI2}
			(Q^{I}_{L}(t))^2&\leq \widetilde{A} \sigma_{L,t-\tau,\alpha}^2\sum_{\ell=L+1}^{\infty} (2\ell+1)\ell^{-\kappa_2}\notag\\
			& \leq \big(\psi^{I}_{\alpha}(t-\tau)\widetilde{A}_{\kappa_2}\big)^2L^{-\kappa_\alpha(\kappa_2)},
		\end{align}
		where $\gamma_\alpha(\kappa_2)$ is given in \eqref{C1} and $\psi^{I}_{\alpha}(\cdot)$ is given in \eqref{int41}, thus completing the proof.
	\end{proof}
	By combining the results of Propositions \ref{theo3} and \ref{theo4}, we present the following theorem.
	\begin{theo}\label{The5}
		Let $U(t)$, $t\in(0,\infty)$, be the solution \eqref{Exact} to the equation \eqref{System}, and the conditions of Propositions {\rm\ref{theo3}} and {\rm\ref{theo4}} be satisfied. Let $\widetilde{\kappa}_\alpha:=\min(\kappa_1-2,\kappa_2+\frac{2}{\alpha}-2)$, $\widehat{\kappa}_{\alpha}:=\min(\kappa_1+2,\kappa_2+\frac{2}{\alpha}-2)$, and $\kappa_\alpha:=\min\big(\kappa_1+2,\gamma_\alpha(\kappa_2)\big)$, where $\gamma_\alpha(\kappa_2)$ is given in \eqref{C1}. Let $Q_L(t)$, $t\in(0,\infty)$, be defined as
		\begin{align}\label{fullq}
			Q_L(t):=\Big\Vert U(t)- U_{L}(t)\Big\Vert_{L_{2}(\Omega\times\bS^2)},\quad L\geq1,  
		\end{align}
		where $U_{L}$ is given by \eqref{Approx}. 
		Then, the following estimates hold true:
		\begin{enumerate}[\rm I)]
			\item\label{itm:AA1} For $0< t\leq \lambda_{L}^{-\frac{1}{\alpha}}$,\ $\tau\geq\lambda_{L}^{-\frac{1}{\alpha}}$,\ $L\geq\ell_0$,
			\begin{align*}
				Q_L(t)\leq \widetilde{C}_{\kappa_1}L^{-(\kappa_1-2)/2},
			\end{align*}
			where $\widetilde{C}_{\kappa_1}$ is given by \eqref{Ckappa}.
			\item\label{itm:AA0} For $\lambda_{L}^{-\frac{1}{\alpha}}<t\leq\tau+\lambda_{L}^{-\frac{1}{\alpha}}$, $L\geq\ell_0$,
			\begin{align*}
				Q_L(t)\leq \left((\psi^{H}_{\alpha}(t)\widetilde{C}_{\kappa_1})^2+\widetilde{A}_{\kappa_2}^2\right)^{\frac{1}{2}}L^{-\widehat{\kappa}_{\alpha}/2},
			\end{align*}
			where $\widetilde{A}_{\kappa_2}$ is given by \eqref{Akappa}.
			\item\label{itm:AA2} For $t>\tau+\lambda_{L}^{-\frac{1}{\alpha}}$, $L\geq\ell_0$,
			\begin{align*}
				Q_L(t)\leq \left(\big(\psi^{H}_{\alpha}(t)\widetilde{C}_{\kappa_1}\big)^2+\big(\psi^{I}_{\alpha}(t-\tau)\widetilde{A}_{\kappa_2}\big)^2\right)^{\frac{1}{2}}L^{-\kappa_\alpha/2},
			\end{align*}
			where $\psi^{H}_{\alpha}(\cdot)$ is given by \eqref{C1t} and $\psi^{I}_{\alpha}(\cdot)$ is given by \eqref{int41}. 
		\end{enumerate}
	\end{theo}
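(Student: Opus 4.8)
The plan is to reduce everything to the two one-sided estimates already proved in Propositions~\ref{theo3} and~\ref{theo4} by exploiting the independence of the two sources of randomness. Writing $U-U_L = (U^H - U_L^H) + (U^I - U_L^I)$ and recalling that $U^H$ is a functional of the initial field $\xi$ alone while $U^I$ is a functional of the driving noise $W_\tau$ alone, the independence and centredness of $\xi$ and $W_\tau$ (already used in Proposition~\ref{Fulliso}) make the cross term in the expansion of $\|U-U_L\|^2_{L_{2}(\Omega\times\bS^2)}$ vanish. The first step is therefore to establish the Pythagorean identity $Q_L(t)^2 = (Q^{H}_{L}(t))^2 + (Q^{I}_{L}(t))^2$, where $Q^{H}_{L}$ and $Q^{I}_{L}$ are the homogeneous and inhomogeneous truncation errors controlled by Propositions~\ref{theo3} and~\ref{theo4}. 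This turns the theorem into a bookkeeping exercise of matching the time regimes of the two propositions against the three regimes in the statement.

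For part~\ref{itm:AA1}, the hypotheses $0<t\le\lambda_L^{-1/\alpha}$ and $\tau\ge\lambda_L^{-1/\alpha}$ force $t\le\tau$, so $U^I(t)=U_L^I(t)=0$ and $Q^{I}_{L}(t)=0$; the claim then follows at once from part~\ref{itm:AB} of Proposition~\ref{theo3}. For part~\ref{itm:AA2}, the assumption $t>\tau+\lambda_L^{-1/\alpha}$ simultaneously puts us in the regime $t>\lambda_L^{-1/\alpha}$ of part~\ref{itm:AC1} of Proposition~\ref{theo3} and in the regime $t>\tau+\lambda_L^{-1/\alpha}$ of part~\ref{itm:B2} of Proposition~\ref{theo4}, so I would insert both bounds directly into the Pythagorean identity.

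Part~\ref{itm:AA0} is the delicate case, and I would treat it carefully: the interval $\lambda_L^{-1/\alpha}<t\le\tau+\lambda_L^{-1/\alpha}$ may contain points with $t\le\tau$ (where $Q^{I}_{L}(t)=0$) as well as points with $\tau<t\le\tau+\lambda_L^{-1/\alpha}$ (where part~\ref{itm:B1} of Proposition~\ref{theo4} applies). In both subcases the bound $Q^{I}_{L}(t)\le \widetilde{A}_{\kappa_2}L^{-(\kappa_2+2/\alpha-2)/2}$ holds, trivially when $Q^{I}_{L}(t)=0$, while $t>\lambda_L^{-1/\alpha}$ lets me use part~\ref{itm:AC1} of Proposition~\ref{theo3} for the homogeneous part.

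It remains, in parts~\ref{itm:AA0} and~\ref{itm:AA2}, to add the two squared bounds and factor out the dominant power of $L$ --- namely $L^{-\widehat{\kappa}_\alpha}$ with $\widehat{\kappa}_\alpha=\min(\kappa_1+2,\kappa_2+2/\alpha-2)$, and $L^{-\kappa_\alpha}$ with $\kappa_\alpha=\min(\kappa_1+2,\gamma_\alpha(\kappa_2))$ respectively --- using that every leftover nonpositive power of $L$ is at most $1$ for $L\ge1$; taking square roots gives the stated constants. The only genuine obstacle is the case analysis in part~\ref{itm:AA0}: one must check that the inhomogeneous estimate holds uniformly over the whole interval even where $U^I$ vanishes identically, and that $t>\lambda_L^{-1/\alpha}$ indeed holds throughout so that the homogeneous estimate of part~\ref{itm:AC1} may be invoked. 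Everything else is a routine assembly of the two propositions.
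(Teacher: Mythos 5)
Your proposal is correct and follows essentially the same route as the paper, which states Theorem \ref{The5} simply "by combining the results of Propositions \ref{theo3} and \ref{theo4}": the Pythagorean identity $Q_L^2=(Q_L^H)^2+(Q_L^I)^2$ (valid by the independence and centredness of $\xi$ and $W_\tau$, exactly as in Proposition \ref{Fulliso}), the observation that the hypotheses of part \ref{itm:AA1} force $t\le\tau$ so that $Q_L^I=0$, and the case analysis in part \ref{itm:AA0} together with bounding each residual power of $L$ by the minimal exponent are precisely the intended argument. Your explicit handling of the subcase $t\le\tau$ inside the regime of part \ref{itm:AA0} is a detail the paper leaves implicit, and it is handled correctly.
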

	
	\begin{rem}
		In view of Theorem {\rm\ref{The5}} and the functions $\psi_{\alpha}^{H}(t)$ and $\psi_{\alpha}^{I}(t)$ given by \eqref{C1t} and \eqref{int41} respectively, it is easy to see that for $\alpha\in(\frac{1}{2},1]$, both functions $\psi_{\alpha}^{H}(t)$ and $\psi_{\alpha}^{I}(t)$ are bounded when $t\to\infty$ and hence the truncation errors $Q_L(t)$ are bounded when $t\to\infty$ {\rm(see case \ref{itm:AA2})}. 
		Also, it is worth noting that
		when $\alpha\in(\frac{1}{2},1]$, the truncation errors $Q_L(t)$ as functions of $t$ are bounded as well when $t\to0$ {\rm(see case \ref{itm:AA1})}, which improves the upper bound for the truncation errors obtained in {\rm\cite{Anh_et_al}} when $\alpha=1$ and $t$ is sufficiently small.
	\end{rem}

	\section{ Temporal increments and sample H\"{o}lder continuity of solution}\label{Sec7}
	
	In this section we study some properties of the stochastic solution of equation \eqref{System}. In particular, we derive an upper bound, in $L_2$-norm, of the temporal increments of the stochastic solution $U(t)$, $t\in(\tau,\infty)$, to equation \eqref{System}. Then we study the sample properties of the stochastic solution $U$. Under some conditions, we show the existence of a locally H\"{o}lder continuous modification of the random field $U$. We demonstrate how the $\mathbb{P}$-almost sure H\"{o}lder continuity of the solution $U$ depends on the decay of the angular power spectra of the initial condition $\xi$ and the driving noise $W_\tau$.

	\begin{theo}\label{Theo6}
		Let $U$ be the solution \eqref{Exact} to the equation \eqref{System}. Let $\calJ_h$, $h>0$, be defined as
		\begin{align}\label{Jinc}
			\calJ_h(t):= \Big\Vert U(t+h)- U(t)\Big\Vert_{L_{2}(\Omega\times\bS^2)}, \quad t>\tau.
		\end{align}
		Then, for $h>0$, there holds 
		\[
		\calJ_h(t)\leq q(t) h^{\frac{1}{2}},
		\]
		where $q(t)$, $t>\tau$, is given by
		\begin{align*}
			q(t)= \sqrt{C\widetilde{C}_{\kappa_1}^2t^{-1}+(1+C)\widetilde{A}_{\kappa_2}^2},\quad C>0,
		\end{align*}
		$\widetilde{C}_{\kappa_1}$, and $\widetilde{A}_{\kappa_2}$ are defined in \eqref{Ckappa} and \eqref{Akappa} respectively.
	\end{theo}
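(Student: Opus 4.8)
The plan is to exploit the decomposition $U=U^{H}+U^{I}$ from \eqref{Exact} together with the independence of the initial field $\xi$ and the noise $W_\tau$. Writing $U(t+h)-U(t)=\bigl(U^{H}(t+h)-U^{H}(t)\bigr)+\bigl(U^{I}(t+h)-U^{I}(t)\bigr)$ and expanding the squared $L_{2}(\Omega\times\bS^2)$-norm, the cross term equals $\bE\langle U^{H}(t+h)-U^{H}(t),\,U^{I}(t+h)-U^{I}(t)\rangle_{L_{2}(\bS^2)}$, which vanishes: in Fourier coefficients each mode of $U^{H}$ is a function of $\widehat{\xi}_{\ell,m}$ alone, each mode of $U^{I}$ of $\widehat{W}_{\ell,m,\tau}$ alone, and the two centred families are independent. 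Hence $\calJ_h(t)^2=\Vert U^{H}(t+h)-U^{H}(t)\Vert^2_{L_{2}(\Omega\times\bS^2)}+\Vert U^{I}(t+h)-U^{I}(t)\Vert^2_{L_{2}(\Omega\times\bS^2)}$, and I would estimate the two terms separately, each producing one summand of $q(t)^2$. By Parseval's identity and the diagonal covariance structures \eqref{var-xile} and \eqref{var-zet}, both norms reduce to single sums over $\ell$ weighted by $(2\ell+1)\calC_\ell$, respectively $(2\ell+1)\calA_\ell$.

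For the homogeneous term the $\ell$-th summand is $(2\ell+1)\calC_\ell\,[E_\alpha(-\lambda_\ell t^\alpha)-E_\alpha(-\lambda_\ell (t+h)^\alpha)]^2$, and the mode $\ell=0$ drops out since $\lambda_0=0$. The key step is that the difference lies in $[0,1]$ by the monotonicity and boundedness of the Mittag-Leffler function (Remark \ref{rem1}), so $x^2\le x$ for $x\in[0,1]$ gives $[E_\alpha(-\lambda_\ell t^\alpha)-E_\alpha(-\lambda_\ell (t+h)^\alpha)]^2\le E_\alpha(-\lambda_\ell t^\alpha)-E_\alpha(-\lambda_\ell (t+h)^\alpha)=\int_t^{t+h}\lambda_\ell v^{\alpha-1}E_{\alpha,\alpha}(-\lambda_\ell v^\alpha)\,dv$. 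Using $\lambda_\ell v^{\alpha-1}E_{\alpha,\alpha}(-\lambda_\ell v^\alpha)\le C/v$, a consequence of \eqref{EMitagg}, together with $v\ge t$ on the interval, this is at most $Ch/t$. Summing against $(2\ell+1)\calC_\ell$ and using $\sum_{\ell\ge1}(2\ell+1)\calC_\ell\le\widetilde{C}_{\kappa_1}^2$ as in \eqref{Vpf} yields $\Vert U^{H}(t+h)-U^{H}(t)\Vert^2_{L_{2}(\Omega\times\bS^2)}\le C\widetilde{C}_{\kappa_1}^2\,t^{-1}h$.

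For the inhomogeneous term I would use \eqref{zetalm} with the substitution $T:=t-\tau$, and split the increment into a stochastic integral over $[0,T]$, with integrand $E_\alpha(-\lambda_\ell(r+h)^\alpha)-E_\alpha(-\lambda_\ell r^\alpha)$, and one over $[T,T+h]$; since these act on disjoint Brownian increments, It\^o's isometry and a change of variables give $\bE|\widehat{U^{I}}_{\ell,m}(t+h)-\widehat{U^{I}}_{\ell,m}(t)|^2=\calA_\ell\bigl(\int_0^T[E_\alpha(-\lambda_\ell(r+h)^\alpha)-E_\alpha(-\lambda_\ell r^\alpha)]^2\,dr+\int_0^h[E_\alpha(-\lambda_\ell r^\alpha)]^2\,dr\bigr)$. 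The second integral is $\le h$ because $E_\alpha\le1$. For the first, applying $x^2\le x$ once more and then telescoping, $\int_0^T[E_\alpha(-\lambda_\ell r^\alpha)-E_\alpha(-\lambda_\ell(r+h)^\alpha)]\,dr=\int_0^h E_\alpha(-\lambda_\ell r^\alpha)\,dr-\int_T^{T+h}E_\alpha(-\lambda_\ell u^\alpha)\,du\le h$. Summing against $(2\ell+1)\calA_\ell$ and invoking \eqref{New-Al} gives $\Vert U^{I}(t+h)-U^{I}(t)\Vert^2_{L_{2}(\Omega\times\bS^2)}\le (1+C)\widetilde{A}_{\kappa_2}^2\,h$.

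Adding the two estimates produces $\calJ_h(t)^2\le\bigl(C\widetilde{C}_{\kappa_1}^2\,t^{-1}+(1+C)\widetilde{A}_{\kappa_2}^2\bigr)h$, and taking square roots gives the claimed bound $\calJ_h(t)\le q(t)h^{1/2}$. I expect the main obstacle to be the inhomogeneous term: one must correctly decompose the increment of the Mittag-Leffler stochastic convolution, namely two stochastic integrals with \emph{different} upper limits and \emph{different} integrands, into independent pieces before It\^o's isometry applies, and then observe that the naive derivative-plus-Cauchy-Schwarz estimate of $\int_0^T[\cdots]^2\,dr$ fails to be integrable near the origin when $\alpha\le\frac12$ (the factor $v^{\alpha-1}$ is not square-integrable at $0$). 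The $x^2\le x$ telescoping argument, which sidesteps the singularity entirely and returns the clean bound $h$, is therefore essential.
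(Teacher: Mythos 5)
Your argument is correct and follows the paper's overall strategy: split $U=U^{H}+U^{I}$, reduce both increments to single sums over $\ell$ via Parseval and It\^o's isometry, exploit $x^{2}\le x$ for quantities in $[0,1]$, and bound the resulting first-order Mittag-Leffler differences. Two of your steps differ in a way worth recording. First, you justify $\calJ_h(t)^2=\calJ_h^H(t)+\calJ_h^I(t)$ by the vanishing of the cross term (independence of $\xi$ and $W_\tau$), which in fact gives equality; the paper invokes the triangle inequality at this point, which as literally stated would only give $\calJ_h\le\sqrt{\calJ_h^H}+\sqrt{\calJ_h^I}$, so your orthogonality argument is the cleaner (and strictly speaking the correct) justification of the additive bound used. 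Second, for the integral $\int_0^{T}\bigl[E_\alpha(-\lambda_\ell r^\alpha)-E_\alpha(-\lambda_\ell(r+h)^\alpha)\bigr]\,dr$ in the inhomogeneous part, you telescope by a change of variables to get the bound $h$ directly from $0<E_\alpha\le1$, whereas the paper runs the mean-value theorem, the identity $\alpha E^{2}_{\alpha,\alpha+1}=E_{\alpha,\alpha}$, the integration formula for $r^{\alpha-1}E_{\alpha,\alpha}(-\lambda r^\alpha)$, and the bound \eqref{EMitagg}. Your route is more elementary and avoids the three-parameter Mittag-Leffler function entirely; the only cost is that your final constant for the inhomogeneous term is $2h$ rather than $(1+C)h$ with the $C$ of \eqref{EMitagg}, which is harmless since one may take $C\ge1$ (or simply absorb it), so the stated form of $q(t)$ is recovered. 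The homogeneous estimate, written as an integral of the derivative rather than via the mean-value theorem, is the same argument as the paper's, and your closing observation about why one must apply $x^{2}\le x$ before differentiating (non-square-integrability of $v^{\alpha-1}$ at the origin for $\alpha\le\tfrac12$) correctly identifies the reason both proofs are structured this way.
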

	\begin{proof}
		Let $t\in(\tau,\infty)$, then by the triangle inequality for $L_{2}(\Omega\times\bS^2)$, we can write 
		\[
		(\calJ_h(t))^2\leq\calJ_h^H(t)+\calJ_h^I(t),
		\]
		where $\calJ_h^H(t)$ and $\calJ_h^I(t)$ are defined as
		\[
		\calJ_h^H(t):=\Big\Vert \sum_{\ell=0}^{\infty} \left(U^{H}_{\ell}(t+h)-U^{H}_{\ell}(t)\right)\Big\Vert_{L_{2}(\Omega\times\bS^2)}^2,
		\]
		\[
		\calJ_h^I(t):=\Big\Vert \sum_{\ell=0}^{\infty}\left(U^{I}_{\ell}(t+h)-U^{I}_{\ell}(t)\right)\Big\Vert_{L_{2}(\Omega\times\bS^2)}^2,
		\]
		where $U^{H}_{\ell}$ and $U^{I}_{\ell}$ are given by \eqref{tru-HomSol} and \eqref{tru-inho} respectively.
		
		\indent
		Using \eqref{tru-HomSol}, the term $\calJ_h^H(t)$ can be written as
		\begin{align*}
			\calJ_h^H(t)&=\bE\Bigg[\Big\Vert\sum_{\ell=0}^{\infty}\Big[ (E_\alpha(-\lambda_{\ell} (t+h)^\alpha)-E_\alpha(-\lambda_{\ell} t^\alpha))^2\Bigl( \sqrt{\calC_{\ell}}Z_{\ell,0}^{(1)}Y_{\ell,0}\\
			&+\sqrt{2\calC_{\ell}}\sum_{m=1}^{\ell}\Bigl(Z_{\ell,m}^{(1)}\Re Y_{\ell,m}+Z_{\ell,m}^{(2)}\Im Y_{\ell,m}\Big)\Big)\Big]\Big\Vert_{L_{2}(\bS^2)}^2\Bigg].
		\end{align*}
		Since 
		$Z_{\ell,m_1}^{(1)},Z_{\ell,m_2}^{(2)}\in\mathcal{Z}_{\ell}$, $\ell\in\N_0, m_1=0,\dots,\ell$, $m_2=1,2,\dots,\ell$, there holds, using \eqref{Ylm},
		\begin{align}\label{Jh1}
			\calJ_h^H(t)&=\sum_{\ell=0}^{\infty}\Big[ \big\vert E_\alpha(-\lambda_{\ell} (t+h)^\alpha)-E_\alpha(-\lambda_{\ell} t^\alpha)\big\vert^2\Bigl( \calC_{\ell} \bE \left[\big(Z_{\ell,0}^{(1)}\big)^2\right]\Vert Y_{\ell,0}\Vert_{L_{2}(\bS^2)}^2\notag\\
			&+2\calC_{\ell}\sum_{m=1}^{\ell}\Bigl(\bE \left[\big(Z_{\ell,m}^{(1)}\big)^2\right]\Vert \Re Y_{\ell,m}\Vert_{L_{2}(\bS^2)}^2+\bE \left[\big(Z_{\ell,m}^{(2)}\big)^2\right]\Vert \Im Y_{\ell,m}\Vert_{L_{2}(\bS^2)}^2\Big)\Big)\Big]\notag\\
			&=\sum_{\ell=1}^{\infty}\big\vert E_\alpha(-\lambda_{\ell} (t+h)^\alpha)-E_\alpha(-\lambda_{\ell} t^\alpha)\big\vert^2(2\ell+1)\calC_{\ell}.
		\end{align}
		Since $0<E_{\alpha}(\cdot)\leq1$, \eqref{Jh1} becomes bounded by
		\begin{align}\label{Jh}
			\calJ_h^H(t)&\leq \sum_{\ell=1}^{\infty}\big\vert E_\alpha(-\lambda_{\ell} (t+h)^\alpha)-E_\alpha(-\lambda_{\ell} t^\alpha)\big\vert(2\ell+1)\calC_{\ell}.
		\end{align}
		Now using the mean-value theorem for the function $E_\alpha(-\lambda_{\ell} t^\alpha)$ (see \cite{Gorenflo2014}, equation 4.3.1 ), then \eqref{Jh} becomes
		\begin{align}\label{Et2}
			\calJ_h^H(t)	&\leq h\sum_{\ell=1}^{\infty}\big\vert -\alpha\lambda_{\ell}t_2^{\alpha-1} E_{\alpha,\alpha+1}^{2}(-\lambda_{\ell} t_2^\alpha)\big\vert(2\ell+1)\calC_{\ell}\notag\\
			& \leq ht^{\alpha-1}\sum_{\ell=1}^{\infty} \lambda_{\ell}\alpha E_{\alpha,\alpha+1}^{2}(-\lambda_{\ell} t^\alpha)(2\ell+1)\calC_{\ell},
		\end{align}
		where $t_2\in(t,t+h)$ and $E_{\alpha,\alpha+1}^{2}(\cdot)$ is the 3-parameter Mittag-Leffler function (see \cite{Gorenflo2014}) and the second step uses that $E_{\alpha,\alpha+1}^{2}(\cdot)$ is decreasing in $t$.

		\indent
		Using the relations (see \cite{Gorenflo2014}, equation 5.1.14) 
		\begin{align}\label{E2-up}
			\alpha E_{\alpha,\beta}^{2}=E_{\alpha,\beta-1}-(1+\alpha-\beta)E_{\alpha,\beta},\quad \alpha>0,\ \beta>1,  
		\end{align}
		and \eqref{EMitagg} we estimate \eqref{Et2} as
		\begin{align*}
			\calJ_h^H(t)	& \leq h t^{\alpha-1}\sum_{\ell=1}^{\infty} \lambda_{\ell}E_{\alpha,\alpha}(-\lambda_{\ell} t^\alpha)(2\ell+1)\calC_{\ell}\\
			& \leq C ht^{-1}\sum_{\ell=1}^{\infty} (2\ell+1)\calC_{\ell}\Big(\frac{\lambda_{\ell}t^{\alpha}}{1+\lambda_{\ell} t^\alpha}\Big)\\
			&\leq C ht^{-1}\sum_{\ell=1}^{\infty} (2\ell+1)\calC_{\ell}\\
			& \leq C ht^{-1}\widetilde{C}_{\kappa_1}^2.
		\end{align*}
		Now we treat the term $\calJ_h^I(t)$. 
		By using \eqref{tru-inho} we have
		\begin{align}\label{Int-s}
			\calJ_h^I(t)&=\sum_{\ell=0}^{\infty}\Big[ \Bigl( \calA_{\ell} \bE \left[\big(\calI_{\ell,0,\alpha}^{(1)}(t-\tau+h)-\calI_{\ell,0,\alpha}^{(1)}(t-\tau)\big)^2\right]\Vert Y_{\ell,0}\Vert_{L_{2}(\bS^2)}^2\notag\\
			&+2\calA_{\ell}\sum_{m=1}^{\ell}\Bigl(\bE \left[\big(\calI_{\ell,m,\alpha}^{(1)}(t-\tau+h)-\calI_{\ell,m,\alpha}^{(1)}(t-\tau)\big)^2\right]\Vert \Re Y_{\ell,m}\Vert_{L_{2}(\bS^2)}^2\notag\\
			&+\bE \left[\big(\calI_{\ell,m,\alpha}^{(2)}(t-\tau+h)-\calI_{\ell,m,\alpha}^{(2)}(t-\tau)\big)^2\right]\Vert \Im Y_{\ell,m}\Vert_{L_{2}(\bS^2)}^2\Big)\Big)\Big],
		\end{align}
		since by  Proposition \ref{PropVar}, $\calI_{\ell,m_1,\alpha}^{(1)},\calI_{\ell,m_2,\alpha}^{(2)}$,  $m_1=0,1,\dots,\ell$, $m_2=1,\dots,\ell$, are independent.
		
		\indent
		Let $\Xi_{\ell,m_1,\alpha}^{(1)}(h)$ and $\Xi_{\ell,m_2,\alpha}^{(2)}(h)$, $m_1=0,1,\dots,\ell$, $m_2=1,\dots,\ell$, be defined as
		\[
		\Xi_{\ell,m_1,\alpha}^{(1)}(h):= \bE \left[\big(\calI_{\ell,m_1,\alpha}^{(1)}(t-\tau+h)-\calI_{\ell,m_1,\alpha}^{(1)}(t-\tau)\big)^2\right]
		\]
		and 
		\[
		\Xi_{\ell,m_2,\alpha}^{(2)}(h):=\bE\left[\big(\calI_{\ell,m_2,\alpha}^{(2)}(t-\tau+h)-\calI_{\ell,m_2,\alpha}^{(2)}(t-\tau)\big)^2\right].
		\]
		\noindent
		Since by Proposition \ref{PropVar}, $\beta_{\ell,m_1}^{(1)}\in\mathcal{B}_{\ell,0}$, $\ell\in\N_0, m_1=0,\dots,\ell$, there holds
		\begin{align}\label{Pf:incovu}
			\Xi_{\ell,m_1,\alpha}^{(1)}(h)&= \bE \Bigg[\Bigg(\int_{t-\tau}^{t-\tau+h}E_{\alpha}(-\lambda_{\ell}(t-\tau+h-u)^{\alpha})d\beta_{\ell,m_1}^{(1)}(u)\notag\\
			&+\int_{0}^{t-\tau}\Big(E_{\alpha}(-\lambda_{\ell}(t+h-\tau-u)^{\alpha})-E_{\alpha}(-\lambda_{\ell}(t-\tau-u)^{\alpha})\Big)d\beta_{\ell,m_1}^{(1)}(u) \Bigg)^2\Bigg]\notag\\
			&\leq \bE \Bigg[\Bigg(\int_{t-\tau}^{t-\tau+h}E_{\alpha}(-\lambda_{\ell}(t-\tau+h-u)^{\alpha})d\beta_{\ell,m_1}^{(1)}(u)\Bigg)^2\Bigg]\notag\\
			&+  \bE \Bigg[\Bigg(\int_{0}^{t-\tau}\Big(E_{\alpha}(-\lambda_{\ell}(t+h-\tau-u)^{\alpha})-E_{\alpha}(-\lambda_{\ell}(t-\tau-u)^{\alpha})\Big)d\beta_{\ell,m_1}^{(1)}(u)\Bigg)^2\Bigg]\notag\\
			&\leq\int_{t-\tau}^{t-\tau+h}\Big(E_{\alpha}(-\lambda_{\ell}(t-\tau+h-u)^{\alpha}\Big)^2du\notag\\
			&+ \int_{0}^{t-\tau}\Big(E_{\alpha}(-\lambda_{\ell}(t+h-\tau-u)^{\alpha})-E_{\alpha}(-\lambda_{\ell}(t-\tau-u)^{\alpha})\Big)^2du,
		\end{align}
		where the last step uses It\^o's isometry (see \cite{Kuo2006}).
		
		\indent
		Since $0<E_{\alpha}(\cdot)\leq1$, we can write \eqref{Pf:incovu}, using Remark \ref{rem1}, as 
		\begin{align}\label{Pf:incov}
			\Xi_{\ell,m_1,\alpha}^{(1)}(h)&\leq  \sigma_{\ell,h,\alpha}^2 + \int_{0}^{t-\tau}\Big\vert E_{\alpha}(-\lambda_{\ell}(t+h-\tau-u)^{\alpha})-E_{\alpha}(-\lambda_{\ell}(t-\tau-u)^{\alpha})\Big\vert du\notag\\
			&\leq \sigma_{\ell,h,\alpha}^2+h\int_{0}^{t-\tau}\big\vert -\alpha\lambda_{\ell}(t-\tau-u)^{\alpha-1} E_{\alpha,\alpha+1}^{2}(-\lambda_{\ell} (t-\tau-u)^\alpha)\big\vert du\notag\\
			&\leq \sigma_{\ell,h,\alpha}^2+h\lambda_{\ell}\int_{0}^{t-\tau}\big\vert (t-\tau-u)^{\alpha-1} E_{\alpha,\alpha}(-\lambda_{\ell} (t-\tau-u)^\alpha)\big\vert du,
		\end{align}
		where the second step uses the mean-value theorem for the function $E_\alpha(-\lambda_{\ell} (t-\tau)^\alpha)$ (see \cite{Gorenflo2014}, equation 4.3.1) and the last step uses \eqref{E2-up}.
		
		\indent
		Using Remark \ref{rem1} and relation (4.4.4) in \cite{Gorenflo2014}, then \eqref{Pf:incov} becomes
		\begin{align}\label{pf:new}
			\Xi_{\ell,m_1,\alpha}^{(1)}(h)&\leq h+h\lambda_{\ell} (t-\tau)^{\alpha} E_{\alpha,\alpha}(-\lambda_{\ell} (t-\tau)^\alpha)\notag\\
			& \leq h+ h  C\frac{\lambda_{\ell}(t-\tau)^\alpha}{1+\lambda_{\ell}(t-\tau)^\alpha}\notag\\
			&\leq h(1+C),
		\end{align}
		where the second step uses \eqref{EMitagg}.
		
		\indent
		Similarly, we obtain, for $m_2=1,\dots,\ell$, that
		\begin{align}\label{Pf:incov2}
			\Xi_{\ell,m_2,\alpha}^{(2)}(h)&\leq h(1+C).
		\end{align}
		
		\noindent	
		Using \eqref{pf:new}, \eqref{Pf:incov2}, and \eqref{Ylm} with \eqref{Int-s}, we get
		\begin{align*}
			\calJ_h^I(t)	& \leq \sum_{\ell=0}^{\infty} h(1+C)(2\ell+1)\calA_{\ell} \\
			& \leq h(1+C)\sum_{\ell=0}^{\infty} (2\ell+1)\calA_{\ell} \leq h(1+C)\widetilde{A}_{\kappa_2}^2.
		\end{align*}
		Thus the result follows by combining the upper bounds $\calJ_h^H(\cdot)$ and $\calJ_h^I(\cdot)$.
	\end{proof}

	Now we demonstrate continuity properties of the stochastic solution $U$  at a given time $t$ with respect to the geodesic distance on $\bS^2$, i.e.  $\tilde{d}(\bsx,\bsy)$, $\bsx,\bsy\in\bS^2$.
	
	Notice that by Proposition  \ref{Fulliso}, the stochastic solution $U$ is known to be mean square continuous (see \cite{MarPec2013}). However, to obtain sample H\"{o}lder continuity of the stochastic solution $U$ we need stronger assumptions on the angular power spectra $\calC_\ell$ and $\calA_\ell$, $\ell\in\N_{0}$. 
	\begin{ass}\label{ass1}
		Assume that the angular power spectra $\{\calC_\ell,\ \ell\in\N_0\}$ and $\{\calA_\ell,\ \ell\in\N_0\}$ satisfy \eqref{New-Cl} and \eqref{New-Al} with $\kappa_1,\kappa_2>2(1+\beta^*)$ for some $\beta^*\in(0,1]$.
	\end{ass}
	It follows from Assumption {\rm\ref{ass1}} that for $\beta^*\in(0,1]$ there exist constants $K^{(1)}_{\beta^*},K^{(2)}_{\beta^*}>0$ such that
	\[
	K^{(1)}_{\beta^*}:=	\sum_{\ell=1}^{\infty}  \ell^{1+2\beta^*}\calC_{\ell}<\infty\ \text{and}\ K^{(2)}_{\beta^*}:=\sum_{\ell=1}^{\infty}  \ell^{1+2\beta^*}\calA_{\ell}<\infty.
	\]
	
The following theorem provides bounds on moments of the variance $Var[U(\bsx,t)-U(\bsy,t)]$, for $\bsx,\bsy\in\bS^2$ in terms of the geodesic distance $\tilde{d}(\bsx,\bsy)$.

	\begin{theo}\label{Hoelder}
		Let $U$, $t\in(0,\infty)$, be the solution given by \eqref{Exact} to the equation \eqref{System}, and the angular power spectra $\calC_\ell$ and $\calA_\ell$ satisfy the Assumption {\rm\ref{ass1}}. Then there exists a constant $K_{\beta^*}>0$ such that
		\[
		Var[U(\bsx,t)-U(\bsy,t)]\leq K_{\beta^*}\; (\tilde{d}(\bsx,\bsy))^{2\beta^*},\quad \beta^*\in(0,1].
		\]
	\end{theo}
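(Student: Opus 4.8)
The plan is to exploit the decomposition $U=U^{H}+U^{I}$ together with the explicit isotropic covariance structure established in Proposition~\ref{Fulliso}. Since $\xi$ and $W_\tau$ are independent, the fields $U^{H}$ and $U^{I}$ are uncorrelated, so the variance of the increment splits as
\[
Var[U(\bsx,t)-U(\bsy,t)] = Var[U^{H}(\bsx,t)-U^{H}(\bsy,t)] + Var[U^{I}(\bsx,t)-U^{I}(\bsy,t)].
\]
First I would treat each summand separately. Using that both fields are centred and that their covariance functions (Propositions~\ref{Prop4} and \ref{covUI}) are expressed through Legendre polynomials via the addition theorem \eqref{addition}, together with $P_\ell(1)=1$, each variance collapses to a single series. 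For the homogeneous part,
\[
Var[U^{H}(\bsx,t)-U^{H}(\bsy,t)] = 2\sum_{\ell=1}^{\infty} (E_\alpha(-\lambda_\ell t^\alpha))^2(2\ell+1)\calC_\ell\,\bigl(1-P_\ell(\bsx\cdot\bsy)\bigr),
\]
and analogously for $U^{I}$ (when $t>\tau$; the term vanishes for $t\le\tau$) with the factor $(E_\alpha(-\lambda_\ell t^\alpha))^2\calC_\ell$ replaced by $\sigma_{\ell,t-\tau,\alpha}^2\calA_\ell$. The $\ell=0$ term drops out because $P_0\equiv 1$.

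The crucial step is a pointwise estimate of the Legendre factor $1-P_\ell(\bsx\cdot\bsy)$ in terms of the geodesic distance. Writing $\bsx\cdot\bsy=\cos\theta$ with $\theta=\tilde{d}(\bsx,\bsy)$, I would combine the trivial bound $1-P_\ell(\cos\theta)\le 2$ with the derivative bound
\[
1-P_\ell(\cos\theta)=\int_{\cos\theta}^{1} P_\ell'(s)\,ds \le \frac{\lambda_\ell}{2}(1-\cos\theta)=\lambda_\ell\sin^2(\theta/2)\le \frac{\lambda_\ell}{4}\,\theta^2,
\]
which relies on the classical fact that $\max_{s\in[-1,1]}|P_\ell'(s)|=P_\ell'(1)=\lambda_\ell/2$. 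Interpolating the two bounds with weights $\beta^*$ and $1-\beta^*$ then yields
\[
1-P_\ell(\cos\theta)\le C_{\beta^*}\,\lambda_\ell^{\beta^*}\,(\tilde{d}(\bsx,\bsy))^{2\beta^*},\quad \beta^*\in(0,1],
\]
for a constant $C_{\beta^*}>0$. I expect this Legendre estimate, and in particular the uniform derivative bound, to be the main technical obstacle; the remainder is bookkeeping.

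Finally I would substitute this estimate into the two series. Bounding $(E_\alpha(-\lambda_\ell t^\alpha))^2\le 1$ and $\sigma_{\ell,t-\tau,\alpha}^2\le t-\tau$ (both from Remark~\ref{rem1}), and using $\lambda_\ell=\ell(\ell+1)\le 2\ell^2$ together with $2\ell+1\le 3\ell$ for $\ell\ge1$, the homogeneous series is controlled by a constant multiple of $(\tilde{d}(\bsx,\bsy))^{2\beta^*}\sum_{\ell\ge1}\ell^{1+2\beta^*}\calC_\ell$ and the inhomogeneous one by a constant multiple of $(t-\tau)(\tilde{d}(\bsx,\bsy))^{2\beta^*}\sum_{\ell\ge1}\ell^{1+2\beta^*}\calA_\ell$. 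Under Assumption~\ref{ass1} these two sums equal the finite constants $K^{(1)}_{\beta^*}$ and $K^{(2)}_{\beta^*}$, so collecting terms gives $Var[U(\bsx,t)-U(\bsy,t)]\le K_{\beta^*}(\tilde{d}(\bsx,\bsy))^{2\beta^*}$ with $K_{\beta^*}$ depending on $t,\tau,\alpha$ and $\beta^*$, which completes the argument.
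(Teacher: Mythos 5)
Your proposal is correct and follows essentially the same route as the paper's proof: reduce the increment variance to a series in $(2\ell+1)$ times the angular power spectra times $1-P_\ell(\cos\tilde{d}(\bsx,\bsy))$, interpolate between the trivial bound $1-P_\ell\le 2$ and the derivative bound $1-P_\ell(x)\le \tfrac{\lambda_\ell}{2}(1-x)$, use $1-\cos r\le r^2$, and conclude from Assumption~\ref{ass1}. The only cosmetic difference is that you split the variance into the $U^{H}$ and $U^{I}$ contributions up front, whereas the paper works with the combined covariance and handles the cases $t\le\tau$ and $t>\tau$ directly; the estimates are identical.
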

	\begin{proof}
		Let $t\leq\tau$. Then by \eqref{Exact} and \eqref{gamlm} there holds 
		\begin{align*}
			\bE\left[U(\bsx,t)U(\bsy,t)\right]&=\bE\left[U^{H}(\bsx,t)U^{H}(\bsy,t)\right]\\
			&= \sum_{\ell=0}^\infty(2\ell+1)\calC_\ell (E_\alpha(-\lambda_{\ell} t^\alpha))^2P_{\ell}(\bsx\cdot\bsy),
		\end{align*}
		where $P_{\ell}(\cdot)$, $\ell\in\N_{0}$, is the Legendre polynomial of degree $\ell$. 
		
		\indent
		Thus for $t\leq\tau$ there holds
		\begin{align}\label{holder-1}
			Var[U(\bsx,t)-U(\bsy,t)]&=Var[U^H(\bsx,t)]+Var[U^H(\bsy,t)]-2\bE[U^H(\bsx,t)U^H(\bsy,t)]\notag\\
			&=2\big(	Var[U^H(\bsx,t)]-\bE[U^H(\bsx,t)U^H(\bsy,t)]\big)\notag\\
			&=2\sum_{\ell=0}^\infty(2\ell+1)\calC_\ell (E_\alpha(-\lambda_{\ell} t^\alpha))^2(1-P_{\ell}(\bsx\cdot\bsy))\notag\\
			&\leq 2\sum_{\ell=0}^\infty(2\ell+1)\calC_\ell\vert1-P_{\ell}(\cos \tilde{d}(\bsx,\bsy))\vert,
		\end{align}
		where the last step uses that $0<E_\alpha(-\lambda_{\ell} t^\alpha)\leq1$. 
		
		\indent
		Since $P_{\ell}(1)=1$ for all $\ell\in\N_0$ and $P_{\ell}^{\prime}(x)\leq P_{\ell}^{\prime}(1)$ for $x\in[-1,1]$, we obtain
		\begin{align}\label{Holder1}
			\vert 1-P_{\ell}(x)\vert=\Big\vert \int_{x}^{1}P_{\ell}^{\prime}(y)dy\Big\vert\leq \vert1-x\vert\Big(\frac{\ell(\ell+1)}{2}\Big),\quad x\in[-1,1].
		\end{align}
		Also, we observe that 
		\begin{align}\label{Holder2}
			\vert 1-P_{\ell}(x)\vert\leq 2,\quad x\in[-1,1].
		\end{align}
		Then, for some $\beta^*\in(0,1]$ we can write, using \eqref{Holder1} and \eqref{Holder2},
		\begin{align}\label{Leg}
			\vert 1-P_{\ell}(x)\vert&=\vert 1-P_{\ell}(x)\vert^{\beta^*} \vert 1-P_{\ell}(x)\vert^{1-\beta^*}\notag\\
			&\leq \Big(\vert1-x\vert\frac{\ell(\ell+1)}{2}\Big)^{\beta^*} 2^{1-\beta^*}\notag\\
			&\leq 2^{1-2\beta^*}\vert1-x\vert^{\beta^*}(\ell(\ell+1))^{\beta^*}.
		\end{align}
		Thus \eqref{holder-1} becomes bounded, using \eqref{Leg}, by
		\begin{align*}
			Var[U(\bsx,t)-U(\bsy,t)]&\leq  2^{2-2\beta^*}\vert 1-\cos \tilde{d}(\bsx,\bsy)\vert^{\beta^*}\sum_{\ell=0}^\infty(2\ell+1)\calC_\ell\lambda_{\ell}^{\beta^*}\\
			&\leq 2^{2-2\beta^*}(\tilde{d}(\bsx,\bsy))^{2\beta^*}\sum_{\ell=1}^\infty(2\ell+1)\calC_\ell(\ell(\ell+1))^{\beta^*}\\
			&\leq 2^{4-\beta^*}(\tilde{d}(\bsx,\bsy))^{2\beta^*}K^{(1)}_{\beta^*},
		\end{align*}
		where the second step uses the inequality  
		\begin{align*}
			\vert 1-\cos r\vert&= \Big\vert \int_{0}^r \sin x dx\Big\vert\leq r \sin r\\
			&\leq r \Big\vert \int_{0}^r \cos x dx\Big\vert\leq r^2.
		\end{align*}
		\noindent
		Similarly, for $t>\tau$, we have
		\begin{align*}
			\bE\left[U(\bsx,t)U(\bsy,t)\right]&= \sum_{\ell=0}^\infty(2\ell+1)\left(\calC_\ell (E_\alpha(-\lambda_{\ell} t^\alpha))^2 +\calA_\ell\sigma_{\ell,t-\tau,\alpha}^2\right)P_{\ell}(\bsx\cdot\bsy)
		\end{align*}
		and by Assumption \ref{ass1} and \eqref{Leg} we obtain
		\begin{align*}
			Var[U(\bsx,t)-U(\bsy,t)]&=2\sum_{\ell=0}^\infty(2\ell+1)\left(\calC_\ell (E_\alpha(-\lambda_{\ell} t^\alpha))^2 +\calA_\ell\sigma_{\ell,t-\tau,\alpha}^2\right)(1-P_{\ell}(\bsx\cdot\bsy))\\
			&\leq 2\sum_{\ell=0}^\infty(2\ell+1)\left(\calC_\ell +(t-\tau)\calA_\ell\right)(1-P_{\ell}(\bsx\cdot\bsy))\\
			&\leq 2^{4-\beta^*}(\tilde{d}(\bsx,\bsy))^{2\beta^*}\big(K^{(1)}_{\beta^*}+(t-\tau)K^{(2)}_{\beta^*}\big),
		\end{align*}
		where the second step uses Remark \ref{rem1}. 
		
		\indent
		Thus for $t>0$, there exists a constant $K_{\beta^*}$ such that  
		\[
		Var[U(\bsx,t)-U(\bsy,t)]\leq K_{\beta^*}\; (\tilde{d}(\bsx,\bsy))^{2\beta^*},
		\]
		where
		\begin{align*}
			K_{\beta^*}:= 2^{4-\beta^*}
			\begin{cases} 
				K^{(1)}_{\beta^*}, & t\leq\tau, \\
				K^{(1)}_{\beta^*}+(t-\tau)K^{(2)}_{\beta^*}, & t>\tau,
			\end{cases}
		\end{align*}
		which completes the proof.
	\end{proof}
	
Under the assumptions of Theorem \ref{Hoelder}, applying Kolmogorov’s continuity criterion (see Corollary 4.5 in \cite{Lanetal16}), for any fixed $t>0$, we deduce the existence of a locally H\"{o}lder continuous version (a modification) for the random field $U$  of order $\gamma^{*}\in(0,\beta^*)$.

	\section{Numerical studies}\label{Num}
	In this section, we present some numerical examples for the solution $U$ of equation \eqref{System}. In particular, we show the evolution of the stochastic solution $U$ of equation \eqref{System} using simulated data inspired by the CMB map. Also, we explain the convergence rates of the truncation
	errors of the approximations $U_{L}(t)$, $t\in(0,\infty)$, and the $L_2$-norm of the temporal increments of the stochastic solution $U(t)$, $t\in(\tau,\infty)$, to the equation \eqref{System}.

	\subsection{Evolution of the solution}\label{Evol}
	In this subsection, we illustrate the evolution of the stochastic solution $U$ of the equation \eqref{System} using simulated data inspired by the CMB map.

	First, in Figure \ref{fig:U400 at 0} we display a realization of the truncated initial condition at time $t=0$ of degree $L=600$ (i.e. $\xi=U_{600}(0)$) whose angular power spectrum $\calC_{\ell}$ has the form given by \eqref{New-Cl} with $\widetilde{D}=\widetilde{C}=1$, $\kappa_1 =2.3$. The Python HEALPy package (see \cite{Gorski2005}) was used to generate such a realization.
	\begin{figure}[ht]
		\centering
		\includegraphics[width=0.55\textwidth]{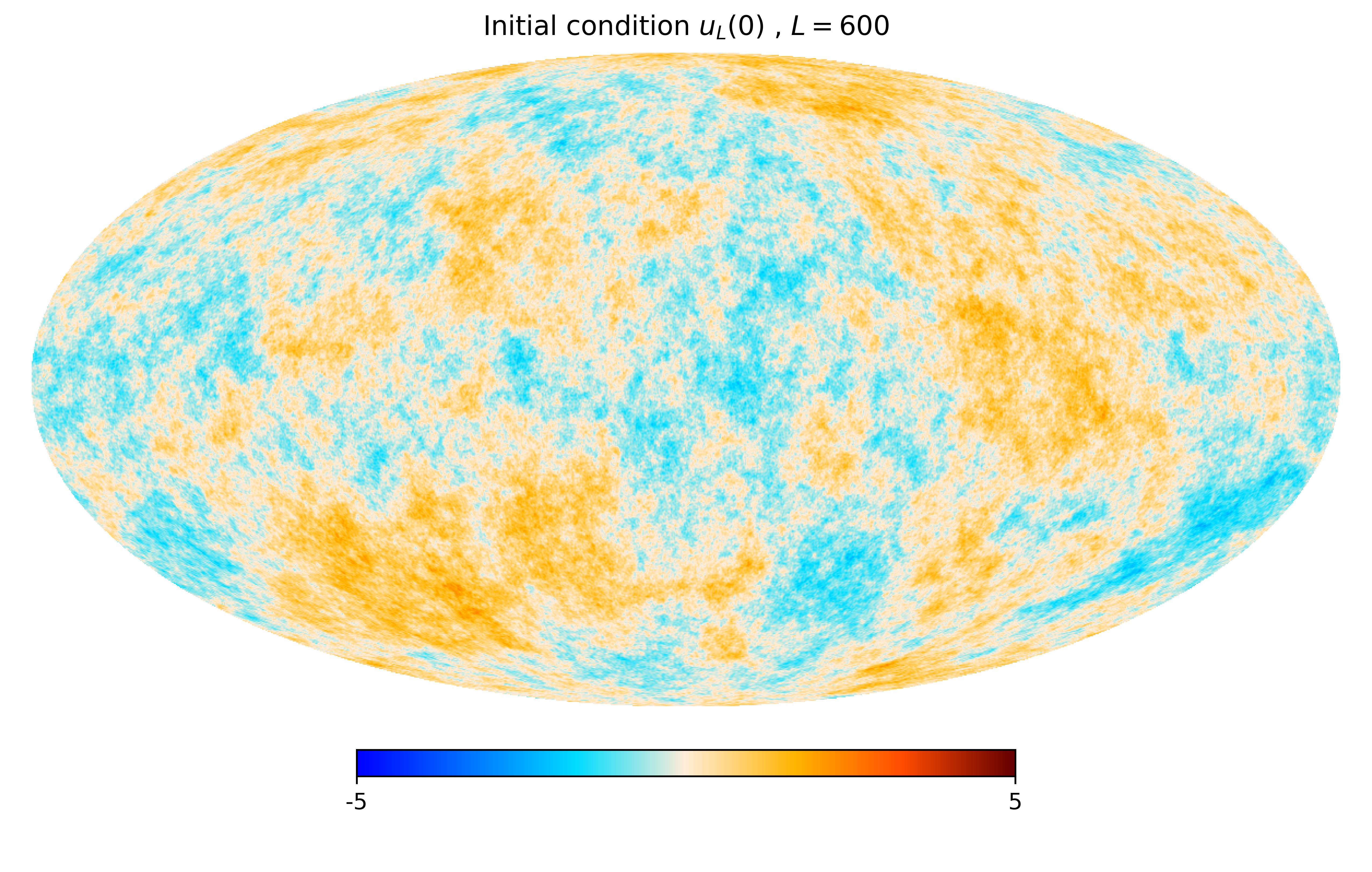}
		\caption{A realization of the truncated initial condition $U_{600}(0)$ with $L=600$, $\alpha=0.5$ and $\kappa_1=2.30$.}
		\label{fig:U400 at 0}
	\end{figure}
	Then, we use the obtained initial realization $U_{600}(0)$ to generate two realizations for truncated solution $U_{600}(t)$ given by \eqref{Approx} at different times, namely $t=\tau$ and $t=10\tau$ with $\tau=10^{-5}$ for the case $\alpha=0.5$, $\kappa_1=2.3$, and $\kappa_2=2.5$. 
	We use the equation \eqref{New-Al} with $\widetilde{K}=\widetilde{A}=10^4$ to compute the angular power spectrum $\calA_{\ell}$ for the time-delayed Brownian motion $W_\tau$. The Fourier coefficients $\widehat{U}_{\ell,m}(t)$ for the solution $U_L(t)$ are computed, using \eqref{gamlm} with \eqref{UH-complexFourier} and \eqref{complexFourier}, by
	\begin{align}\label{Vl0}
		V_{\ell,0}(t):=
		\begin{cases} 
			E_{\alpha}(-\lambda_{\ell}t^{\alpha}) \sqrt{\calC_{\ell}}Z_{\ell,0}^{(1)}, & t\leq\tau, \\
			E_{\alpha}(-\lambda_{\ell}t^{\alpha}) \sqrt{\calC_{\ell}}Z_{\ell,0}^{(1)}+\sqrt{\calA_{\ell}}\mathcal{I}_{\ell,0,\alpha}^{(1)}(t-\tau), & t>\tau,
		\end{cases}
	\end{align}
	and for $m=1,\dots,\ell$,
	\begin{align}\label{Vlm}
		V_{\ell,m}(t):=
		\begin{cases} 
			E_{\alpha}(-\lambda_{\ell}t^{\alpha})\sqrt{\calC_{\ell}/2}
			Z_{\ell,m}, & t\leq\tau, \\
			E_{\alpha}(-\lambda_{\ell}t^{\alpha})\sqrt{\calC_{\ell}/2}
			Z_{\ell,m}+
			\sqrt{\calA_{\ell}/2}
			\mathcal{I}_{\ell,m,\alpha}(t-\tau), & t>\tau,
		\end{cases}
	\end{align}
	where $Z_{\ell,m}:= Z_{\ell,m}^{(1)} - \mi Z_{\ell,m}^{(2)}$ and $\mathcal{I}_{\ell,m,\alpha}(t):= \mathcal{I}_{\ell,m,\alpha}^{(1)}(t)
	-\mi\mathcal{I}_{\ell,m,\alpha}^{(2)}(t)$, $t>0$.
	
	Note that for each realization and given time $t>\tau$, we simulate the stochastic integrals $\calI_{\ell,m,\alpha}^{(j)}(t-\tau)$, $j=1,2$, using Proposition \ref{PropVar}, as independent, normally distributed
	random variables with mean zero and variances $\sigma_{\ell,t-\tau,\alpha}^2$ where $\sigma_{\ell,t,\alpha}^2$ is given by \eqref{var}.
	Figures \ref{HOM1} and \ref{HOM2}, respectively, show two realizations of the truncated solutions $U_{L}(t)$ and $U_{L}^{H}(t)$ with $L=600$, using the initial realization obtained in Figure \ref{fig:U400 at 0}, at times $t=\tau$ and $t=10\tau$ with $\tau=10^{-5}$. Figure \ref{inHOM1} shows a realization of the inhomogeneous solution $U_{L}^I(t)$ at time $t=10\tau$ with $\tau=10^{-5}$ and $\alpha=0.5$, while Figure \ref{fullHOM2} shows a realization of the combined solution $U_{L}(t)$ 
	using the realizations obtained in Figures \ref{HOM2} and \ref{inHOM1}.
	\begin{figure}[ht]
		\centering
		\subfloat[\centering $U_{600}(\tau)$ with $\tau=10^{-5}$. \label{HOM1}]{{\includegraphics[width=0.45\textwidth]{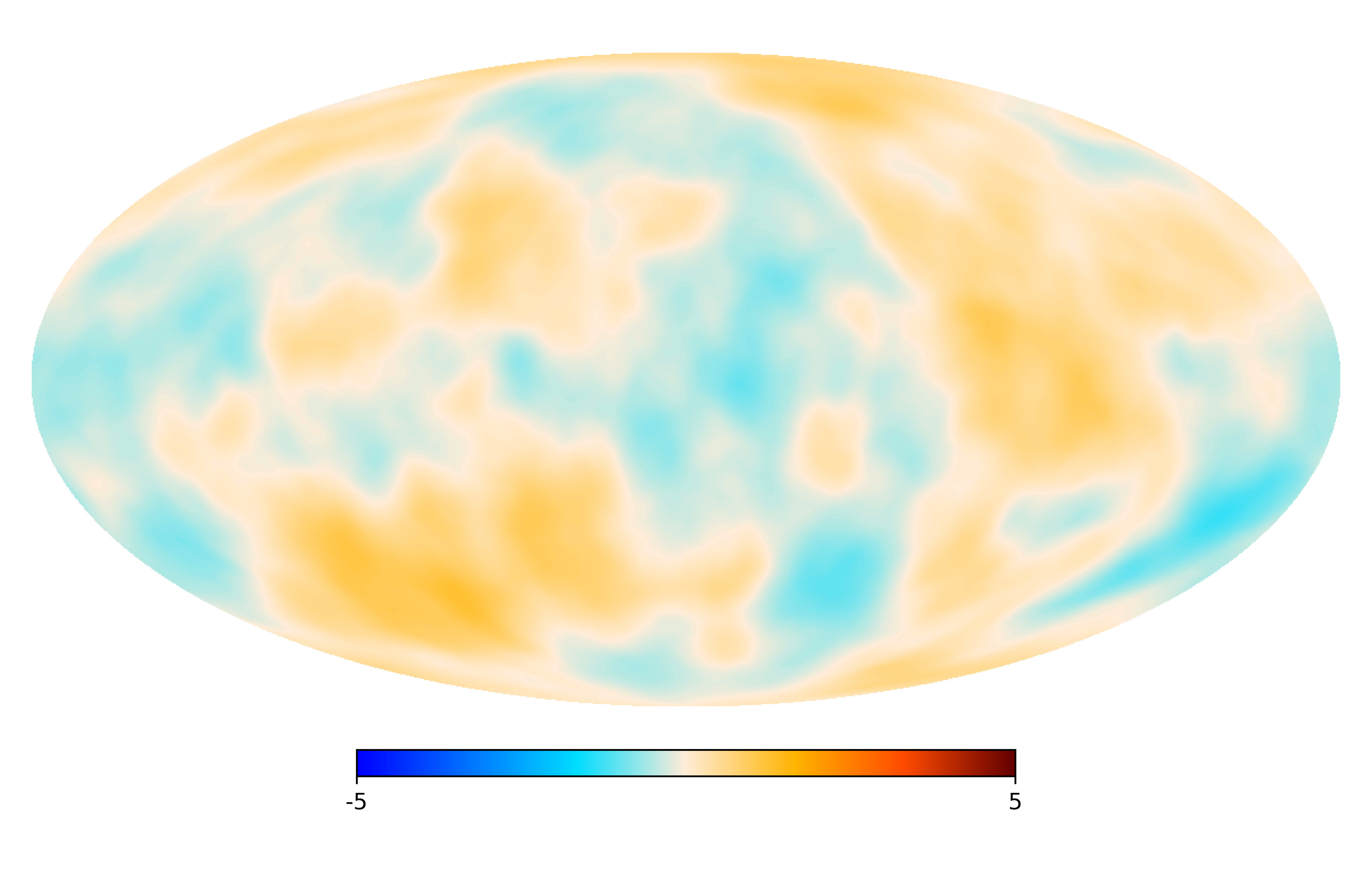} }}%
		\qquad
		\subfloat[\centering $U^H_{600}(10\tau)$ with $\tau=10^{-5}$.\label{HOM2}]{{\includegraphics[width=0.45\textwidth]{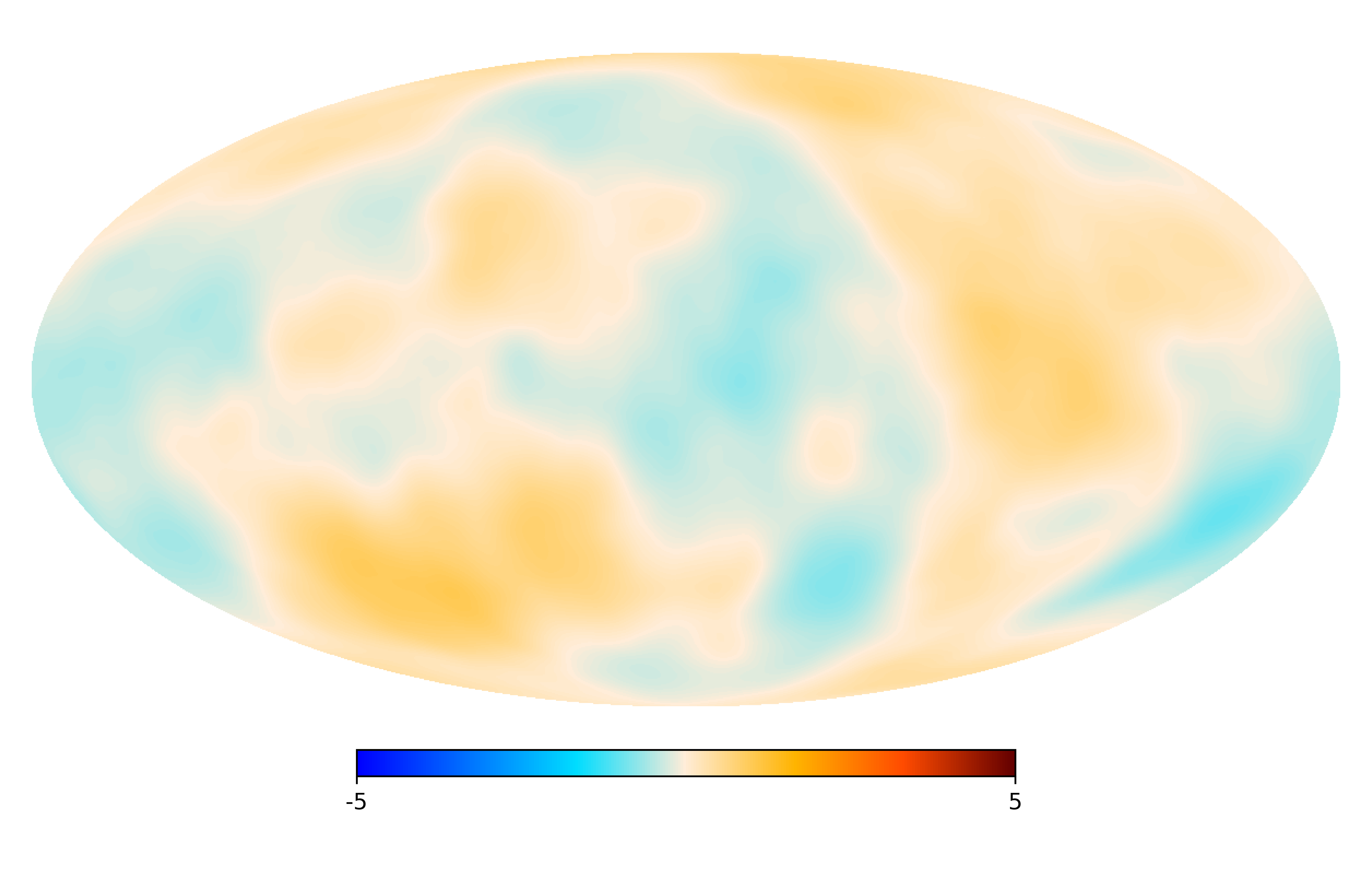} }}%
		\caption{ Truncated homogeneous solutions $U_{600}(\tau)=U^H_{600}(\tau)$ and $U^H_{600}(10\tau)$, using the initial realization in Figure \ref{fig:U400 at 0}, with $\alpha=0.5$, $\kappa_1=2.3$, and $\kappa_2=2.5$.}%
		\label{fig:Moho cases}%
	\end{figure}
	
	\begin{figure}[ht]
		\centering
		\subfloat[\centering $U_{600}^{I}(10\tau)$ with $\tau=10^{-5}$. \label{inHOM1}]{{\includegraphics[width=0.45\textwidth]{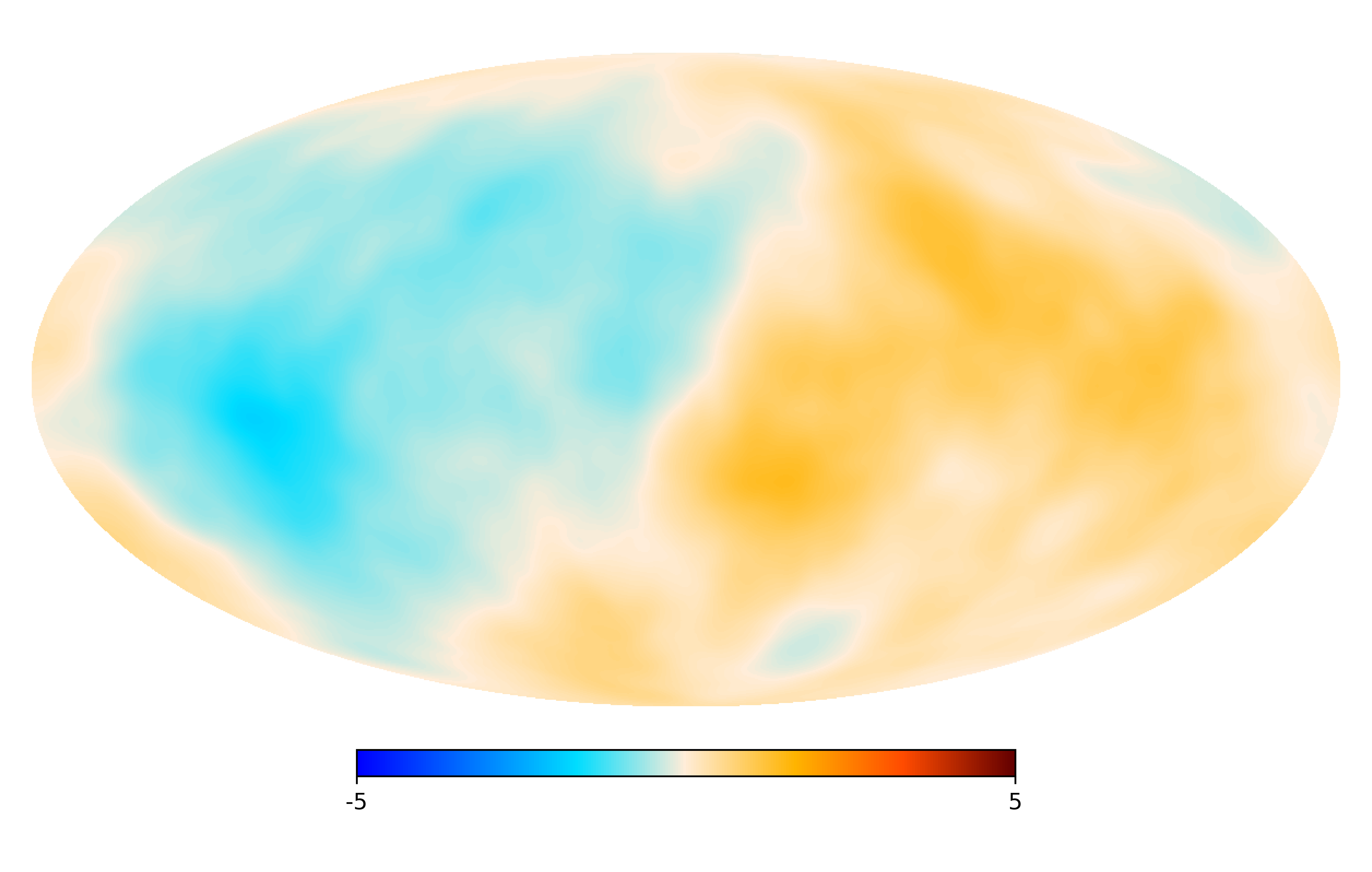} }}%
		\qquad
		\subfloat[\centering $U_{600}(10\tau)=U^H_{600}(10\tau)+U^I_{600}(10\tau)$ with $\tau=10^{-5}$.\label{fullHOM2}]{{\includegraphics[width=0.45\textwidth]{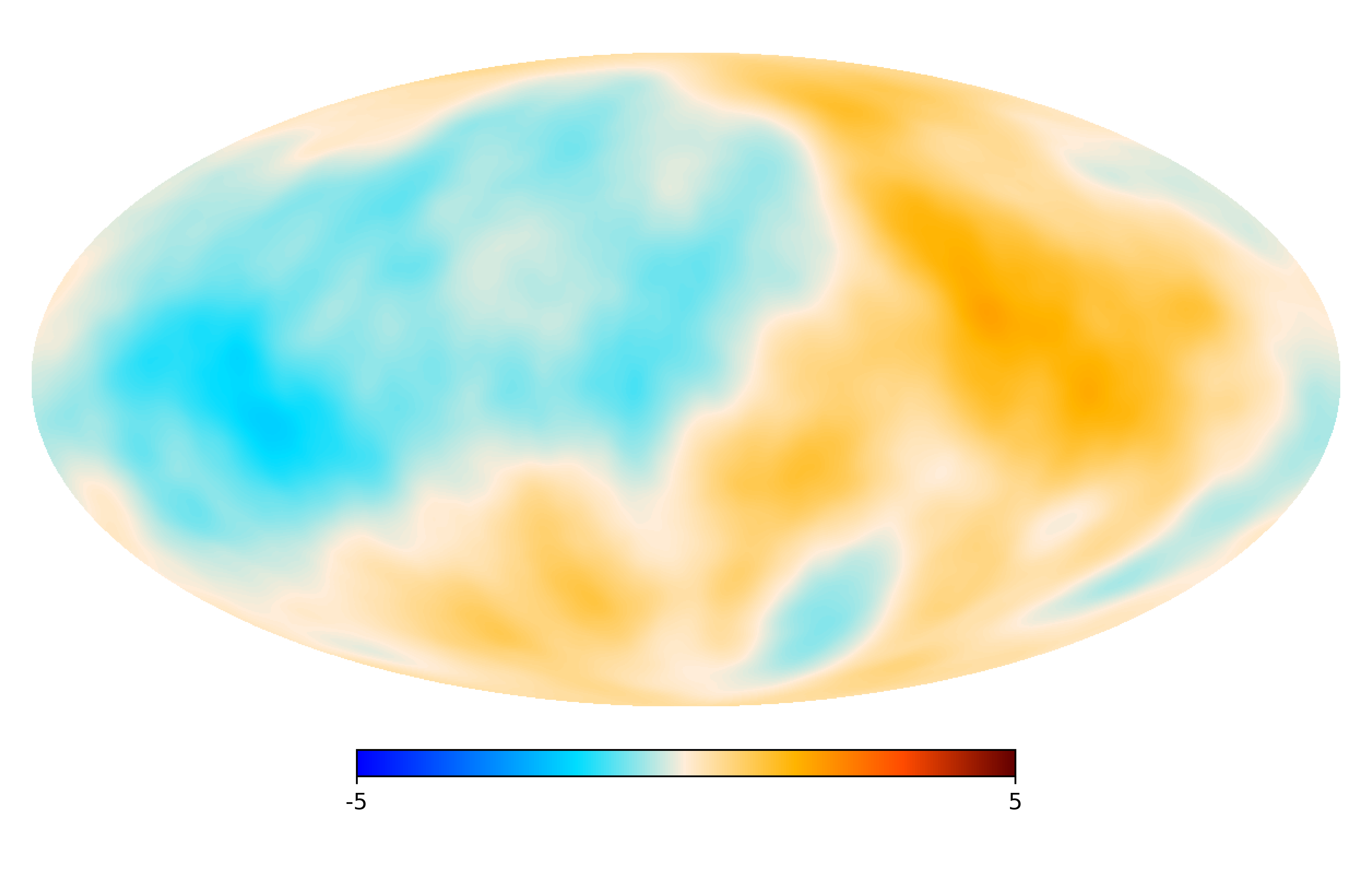} }}%
		\caption{ (a) The truncated inhomogeneous solution $U_{L}^I(10\tau)$ with $\tau=10^{-5}$, $\widetilde{A}=10^4$,  $\alpha=0.5$ and $\kappa_2=2.5$, (b) the truncated combined solution $U_{600}(10\tau)$, using the truncated homogeneous and inhomogeneous solutions in Figures \ref{HOM2} and \ref{inHOM1}.}%
		\label{fig:U400 at 10tau}%
	\end{figure}
	The Figures \ref{HOM1} and \ref{HOM2} reflect the time evolution of the truncated homogeneous stochastic solution $U_{600}^H(t)$ at time $t=\tau$ and  $t=10\tau$ respectively. In this case, the randomness comes only from the initial condition. Figure \ref{inHOM1} reflects the time evolution of the truncated inhomogeneous stochastic solution $U_{600}^I(t)$ at time $t=10\tau$ (in this case the randomness comes only from $W_\tau$). 
	Figure \ref{fullHOM2} reflects the evolution of the same realization obtained in Figure \ref{HOM2} at $t=10\tau$ but for which the noise $W_\tau$ is switched on (in this case the randomness comes from both the initial condition and $W_\tau$, i.e. $U_{600}(10\tau)=U_{600}^H(10\tau)+U_{600}^I(10\tau)$).
	
	\subsection{Simulations of truncation errors of solution}\label{Sim-stage}
	This subsection presents some numerical examples for the truncation errors of the combined stochastic solution $U$ of the equation \eqref{System}. In particular, the numerical examples explain the convergence rates of the truncation errors $Q_{L}(t)$, given by \eqref{fullq}, of the stochastic solution of \eqref{System}. 
	
	To produce numerical results, we use the angular power spectra $\calA_\ell$ and $\calC_\ell$ given by \eqref{New-Cl} and \eqref{New-Al} respectively with $\widetilde{A}=\tilde{K}=10^4$, $\widetilde{C}=\widetilde{D}=1$, $\kappa_1=2.3$, and $\kappa_2=2.5$. Also, we use $U_{\widetilde{L}}$ with $\widetilde{L}=1500$ as a substitution of the solution $U$ to the equation \eqref{System} which is given by \eqref{Exact}. 
	Then the (squared) mean $L_2$-errors are computed, using Parseval's identity,
	by
	\begin{align}\label{Err-system}
		(Q_{L,\widetilde{L}}(t))^2:&=	\Big\Vert U_{\widetilde{L}}(t)-U_L(t)	\Big\Vert_{L_{2}(\Omega\times\bS^2)}^2\notag\\
		&=\bE\Big[\Vert U_{\widetilde{L}}(t)-U_L(t)	\Vert_{L_{2}(\bS^2)}^2\Big]\notag\\ 
		&= \bE\Big[\Big\Vert \sum_{\ell=L+1}^{\widetilde{L}}\sum_{m=-\ell}^{\ell}\widehat{U}_{\ell,m}(\omega,t)Y_{\ell,m}\Big\Vert_{L_2(\bS^2)}^2\Big]\notag\\
		&\approx\dfrac{1}{\widehat{N}}\sum_{j=1}^{\widehat{N}}\sum_{\ell=L+1}^{\widetilde{L}}\sum_{m=-\ell}^{\ell}\Big\vert \widehat{U}_{\ell,m}(\omega_j,t)\Big\vert^2\notag\\
		&=\dfrac{1}{\widehat{N}}\sum_{j=1}^{\widehat{N}}\sum_{\ell=L+1}^{\widetilde{L}}\sum_{m=0}^{\ell}\Big\vert V_{\ell,m}(\omega_j,t)\Big\vert^2,
	\end{align}
	where the fourth line approximates the expectation by the mean of $\widehat{N}=100$ realizations, and 
	$\{V_{\ell,m}:\ell=L+1,\dots,\widetilde{L}; m=0,\dots,\ell\}$ are computed, using \eqref{Vl0} and \eqref{Vlm}.

	\indent
	To illustrate the results of Theorem \ref{The5}, we consider two cases, namely $\alpha=0.5$ and $\alpha=0.75$. Then we compute the root mean $L_2$-errors using equation \eqref{Err-system} with degree up to $L=800$ for different values of $t$. 
	Figure \ref{fig:stage model} shows numerical errors $Q_{L,\widetilde{L}}(t)$ of $\widehat{N}=100$ realizations and the corresponding theoretical errors for different values of $t$ and $\tau$ with $\kappa_1=2.3$ and $\kappa_2=2.5$ for the case $\alpha=0.5$.
	In particular, in Figure \ref{T1} we illustrate the case \ref{itm:AA1} ($0< t\leq\lambda_{L}^{-2}$) in Theorem \ref{The5} with $t=10^{-12}$. 
	In Figure \ref{T2} we illustrate the case \ref{itm:AA0} with $t=\tau+\lambda_{L}^{-2}$ for $\tau=10^{-5}$, while Figure \ref{T3} illustrates the case \ref{itm:AA2} with $t=10\tau$ when $\tau=10^{-5}$. 
	Similarly, we illustrate the results of Theorem \ref{The5} using $\alpha=0.75$. The corresponding results are displayed in Figures \ref{TQ1}-\ref{TQ3}.
	The blue points in each picture in Figures \ref{T1}-\ref{T3} and  \ref{TQ1}-\ref{TQ3} show the
	(sample) mean square approximation errors of $\widehat{N}=100$ realizations of $Q_{L,\widetilde{L}}(t)$. The red line in each figure shows the corresponding theoretical approximation upper bound.
	The numerical results show that the convergence rates of the mean $L_2$-error of $U_{L}$ are consistent with the corresponding theoretical results in Theorem \ref{The5}.
	\begin{figure}[ht]
		\centering
		\subfloat[case \ref{itm:AA1} with $t=10^{-12}$ \label{T1}]{\includegraphics[width=0.5\textwidth]{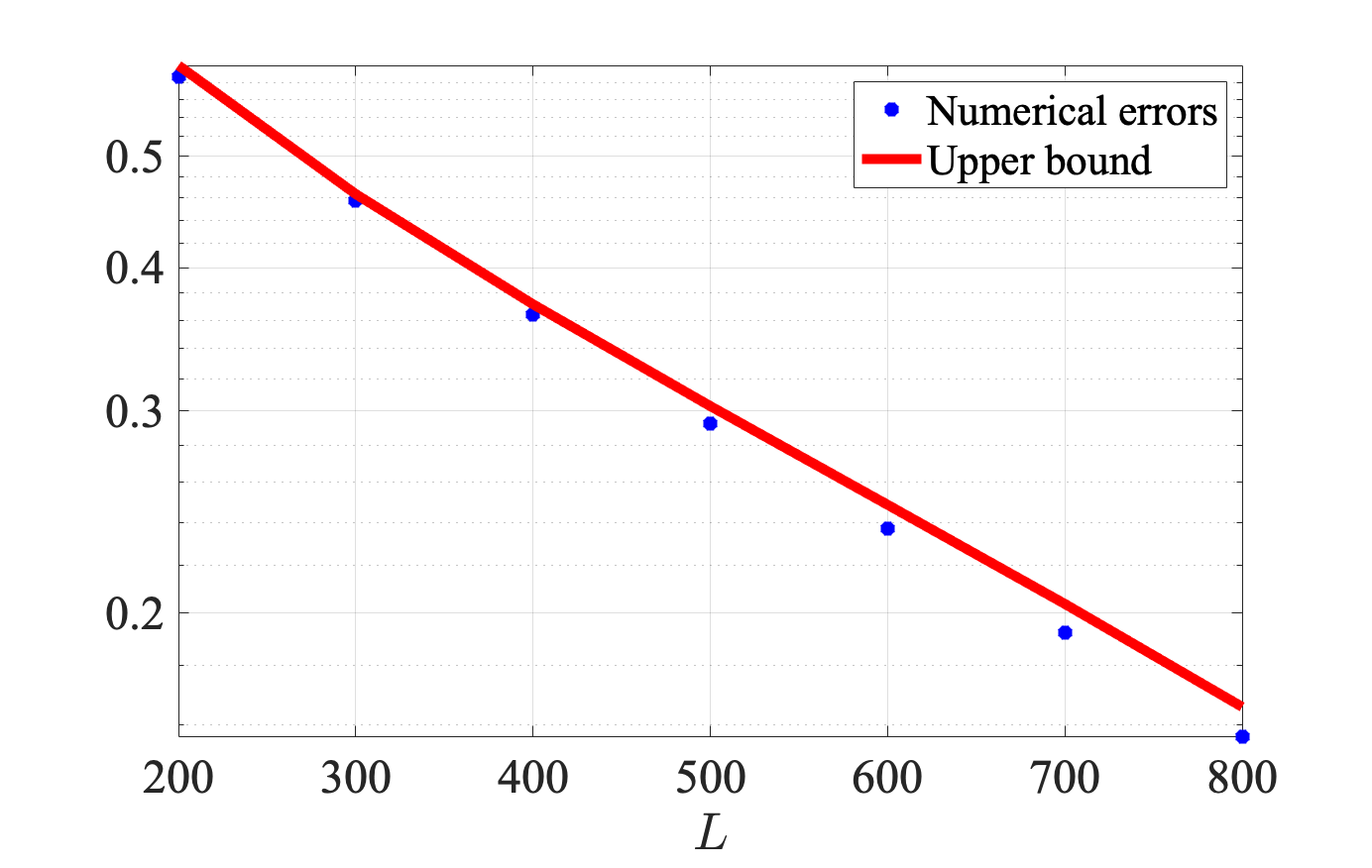}}
		\subfloat[case \ref{itm:AA0} with $t=\tau+\lambda_{800}^{-2}$, $\tau=10^{-5}$\label{T2}]{\includegraphics[width=0.5\textwidth]{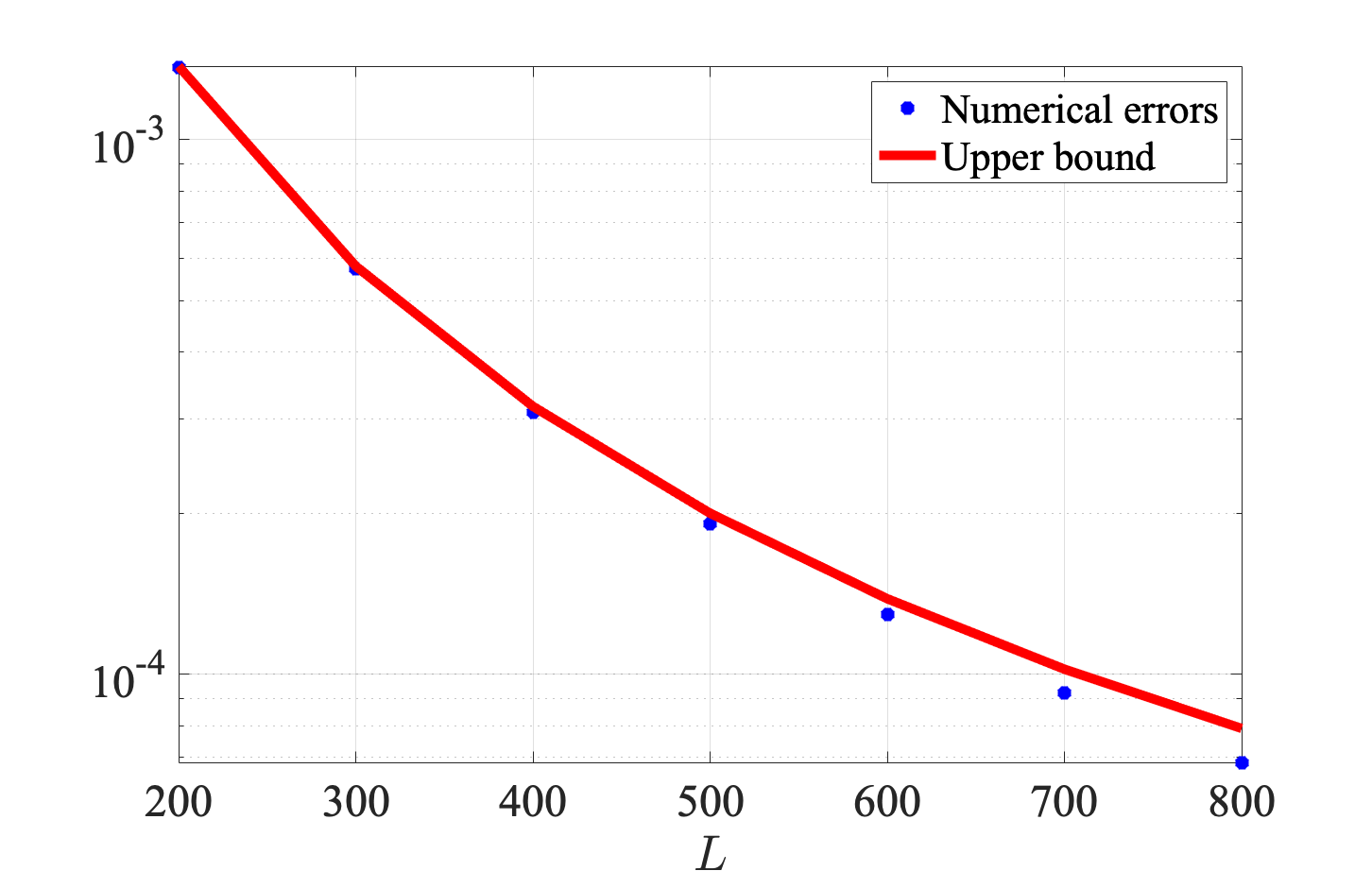}}\qquad
		\subfloat[case \ref{itm:AA2} with $t=10\tau$, $\tau=10^{-5}$\label{T3}]{\includegraphics[width=0.5\textwidth]{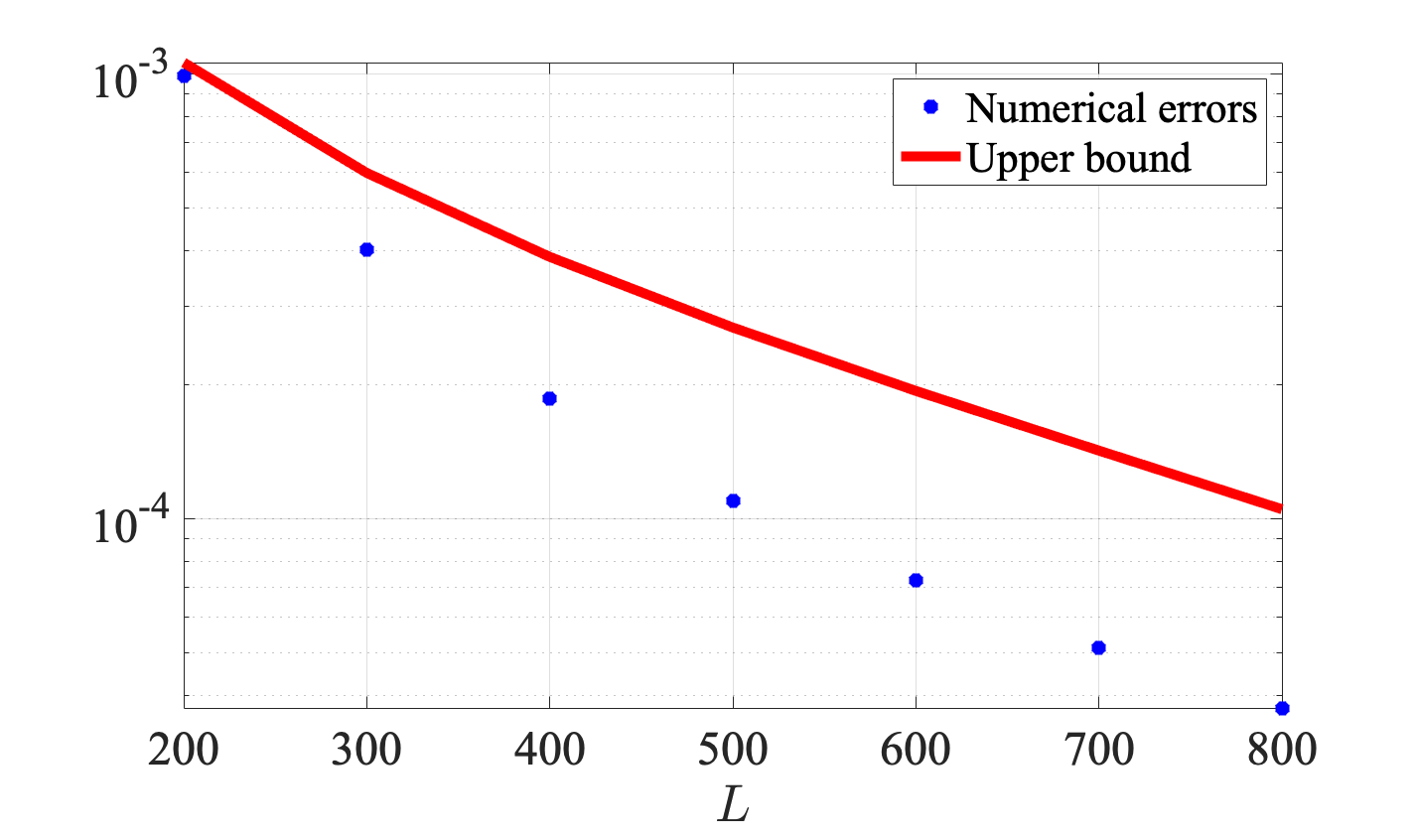}}
		\caption{Numerical errors for the combined solution $U(t)$ with
			$\kappa_1=2.3$, $\kappa_2=2.5$, $\alpha=0.5$.}
		\label{fig:stage model}
	\end{figure}
	\begin{figure}[ht]
		\centering
		\subfloat[case \ref{itm:AA1} with $t=10^{-12}$ \label{TQ1}]{\includegraphics[width=0.5\textwidth]{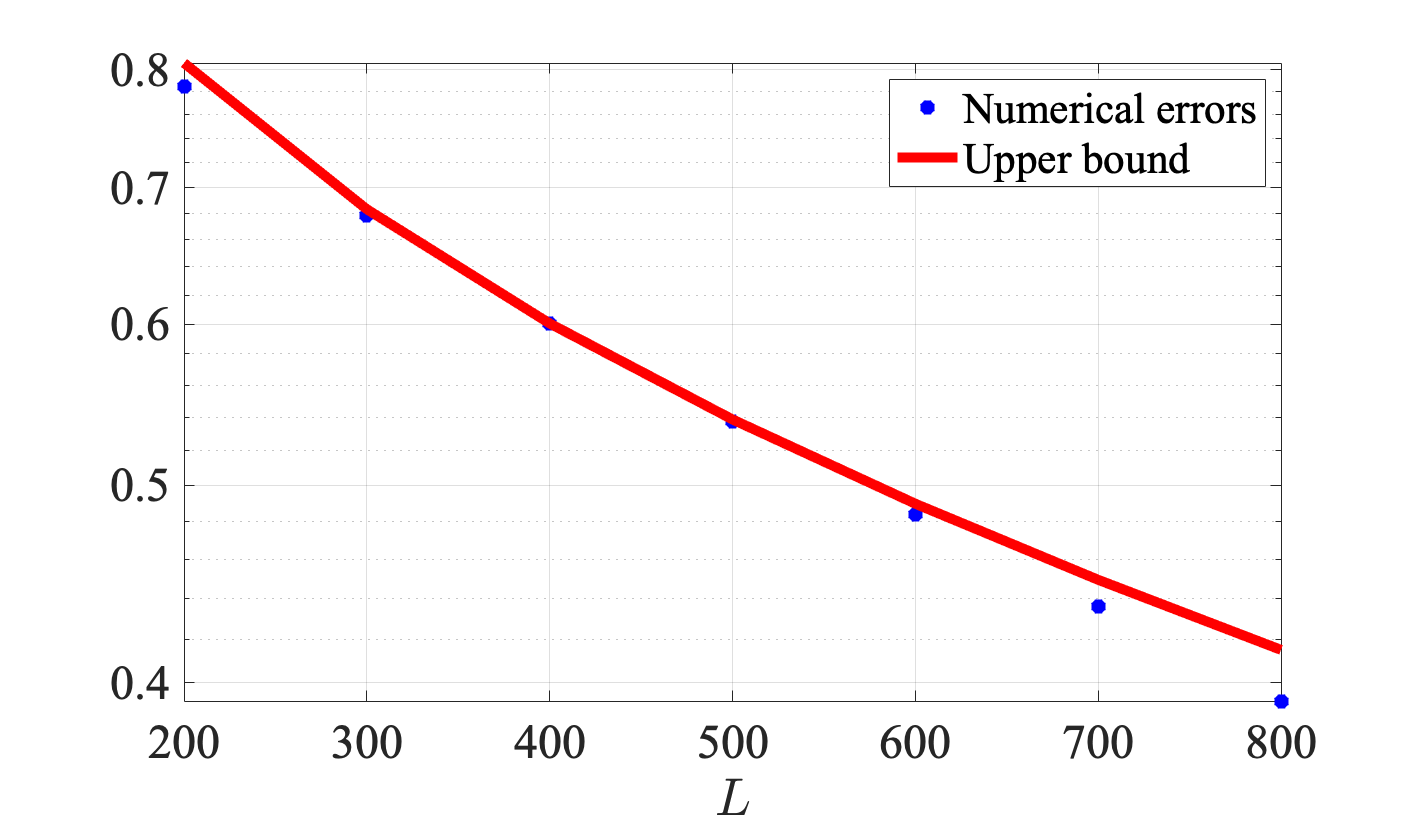}}
		\subfloat[case \ref{itm:AA0} with $t=\tau+\lambda_{800}^{-2}$, $\tau=10^{-5}$\label{TQ2}]{\includegraphics[width=0.5\textwidth]{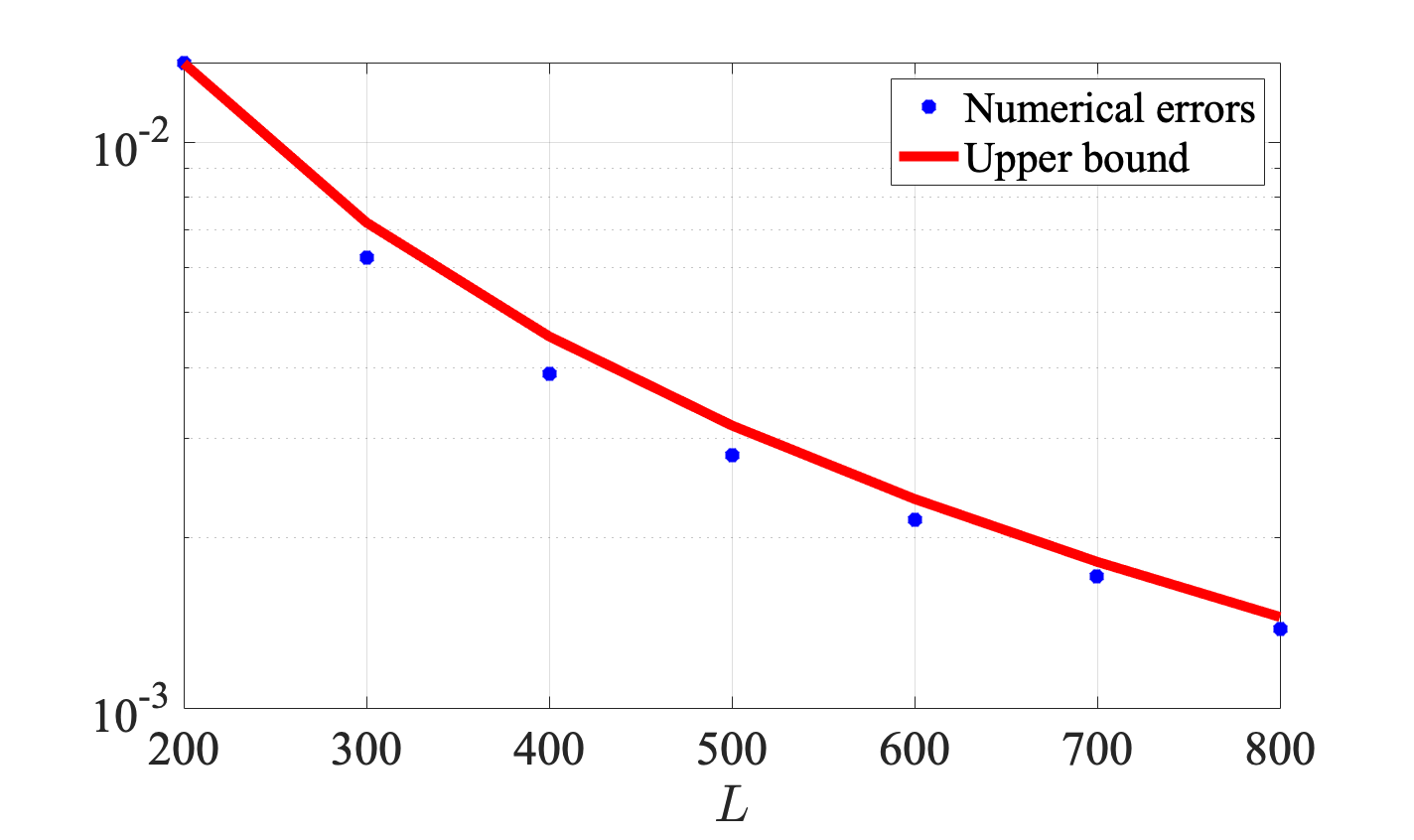}}\qquad
		\subfloat[case \ref{itm:AA2} with $t=10\tau$, $\tau=10^{-5}$\label{TQ3}]{\includegraphics[width=0.5\textwidth]{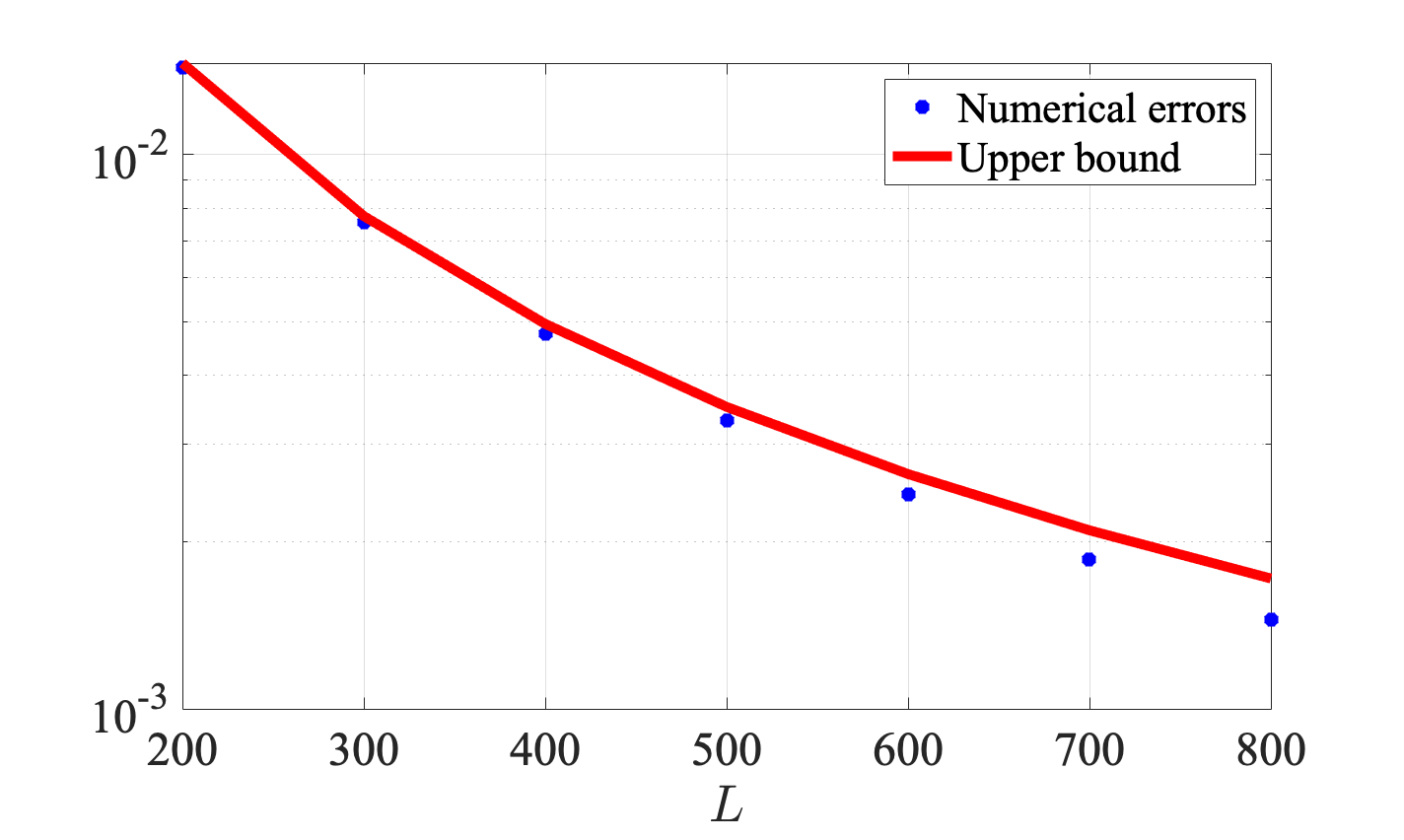}}
		\caption{Numerical errors for the combined solution $U(t)$ with
			$\kappa_1=2.3$, $\kappa_2=2.5$, $\alpha=0.75$.}
		\label{fig:twostage model}
	\end{figure}
	
	\subsection{Simulations of temporal increments}\label{Sim-inc}
	This subsection presents a numerical example for the stochastic solution $U$ of the equation \eqref{System} in time. In particular, the numerical example explains the $L_2-$norm temporal increments $\calJ_h(t)$, given by \eqref{Jinc}, of equation \eqref{System}.

	To produce numerical results, we use the angular power spectra given by \eqref{New-Cl} and \eqref{New-Al}. Then, we use the truncated version of the  $L_2-$norm temporal increments of the solution $U(t)$ for $t\in(\tau,\infty)$ which can be estimated as
	\begin{align}\label{timeErr} 	
		(\calJ_{h, L}(t))^2:&=\Big\Vert U_{L}(t+h)-U_{L}(t)	\Big\Vert_{L_{2}(\Omega\times\bS^2)}^2\notag\\
		&=\bE\Big[\Vert U_{L}(t+h)-U_{L}(t)	\Vert_{L_{2}(\bS^2)}^2\Big]\notag\\ 
		&= \bE\Big[\Big\Vert \sum_{\ell=0}^{\widetilde{L}}\sum_{m=-\ell}^{\ell}(\widehat{U}_{\ell,m}(\omega,t+h)-\widehat{U}_{\ell,m}(\omega,t))Y_{\ell,m}\Big\Vert_{L_2(\bS^2)}^2\Big]\notag\\ 	 	&\approx\dfrac{1}{\widehat{N}}\sum_{j=1}^{\widehat{N}}\sum_{\ell=0}^{\widetilde{L}}\sum_{m=0}^{\ell}\big\vert (V_{\ell,m}(\omega_j,t+h)-V_{\ell,m}(\omega_j,t))\big\vert^2,
	\end{align}
	where $\{V_{\ell,m}:\ell=0,\dots,\widetilde{L}; m=0,\dots,\ell\}$ are computed using \eqref{Vl0} and \eqref{Vlm}.
	
	To illustrate the results of Theorem \ref{Theo6}, we use $U_{\widetilde{L}}$ with $\widetilde{L}=1500$ as a substitution of the solution $U$ to the equation \eqref{System} which is given by \eqref{Exact}. Then, the  $L_2-$norm temporal increments $\calJ_{h, L}(t)$ are computed using the estimate \eqref{timeErr} with $\alpha=0.5$, $t=\tau+\delta$, with $\tau=10^{-5}$ and $\delta=10^{-6}$.
	Figure \ref{fig:inc time} shows numerical temporal increments $\calJ_{h, L}(t)$ of $\widehat{N}=100$ realizations and the corresponding theoretical upper bound for the temporal increments with $t=\tau+\delta$, 
	$\alpha=0.5$, $\kappa_1=2.3$, and $\kappa_2=2.5$, and time increment $h$ ranging from $\delta$ to $11\delta$.  
	\begin{figure}[ht]
		\centering
		\includegraphics[width=0.65\textwidth]{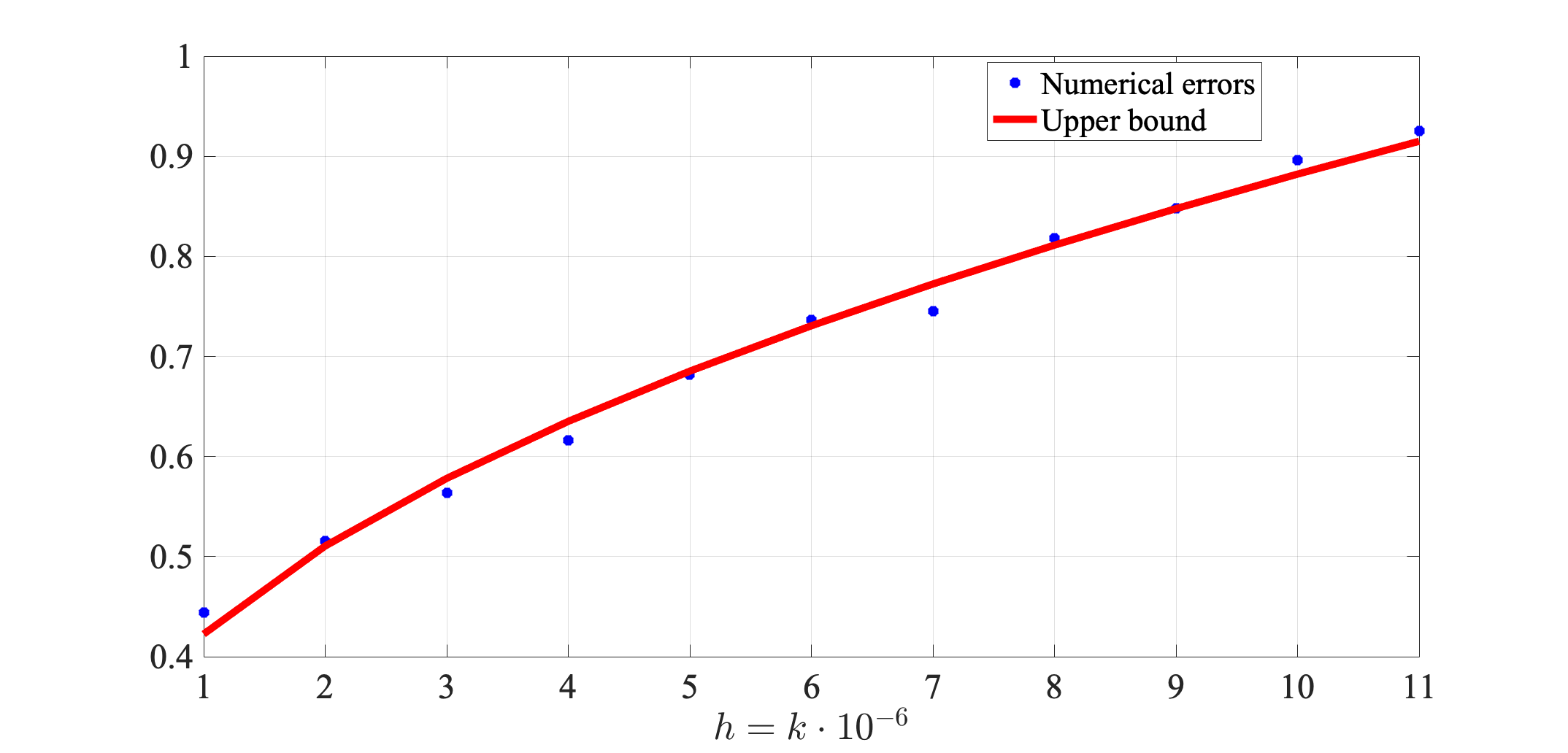}
		\caption{Numerical values of $\mathcal{J}_{h,L}(t) = \|U_L(t+h)-U_L(t)\|_{L_2(\Omega\times\mathbb{S}^2)}$ with $L=1500$ and $t=\tau+\delta$,\;$\tau=10^{-5}$,\; $\delta=10^{-6}$ and $h=k\delta$, $k=1,\ldots,11$.}
		\label{fig:inc time}
	\end{figure}
	The blue points in Figure
	\ref{fig:inc time} show the
	(sample) mean square approximation errors of $\hat{N}=100$ realizations of $\mathcal{J}_h$. The red straight line shows the theoretical approximation upper bound with time increment $h$. 
	The numerical results show that the convergence rates of the $L_2-$norm temporal increments of the solution $U$ of the equation \eqref{System} are consistent with the theoretical results in Theorem \ref{Theo6}.

	\section*{Acknowledgements} 
	This research was supported under the Australian Research Council's Discovery Project funding scheme (Discovery Project number DP180100506). This research includes extensive
	computations using the Linux computational cluster Katana~\cite{katana} supported by the Faculty of
	Science, The University of New South Wales, Sydney. The authors are also grateful to Bill McLean for discussions on some properties of the Mittag-Lefler function and to an anonymous referee's comments to improve the presentation.

	\section*{Declarations}
	\begin{itemize}
		\item All authors declare that they have no conflicts of interest.
		\item Availability of data and code: The Python code used for numerical simulations in the paper can be
		found at \url{https://github.com/qlegia/Stochastic-diffusion-on-sphere}.
	\end{itemize}

	
	
	

	
	

	

	
	
	
\end{document}